\newtheorem{theorem}{Theorem}[section]
\newtheorem{lemma}[theorem]{Lemma}
\newtheorem{claim}{Claim}[theorem]
\newtheorem{cor}[theorem]{Corollary}
\newtheorem{prop}[theorem]{Proposition}
\theoremstyle{definition}
\newenvironment{subproof}[1][Proof]{\begin{proof}[#1]}{\end{proof}}
\newcommand{\mbn}{\mathbb{N}}
\newcommand{\mcc}{\mathcal{C}}
\newcommand{\mcd}{\mathcal{D}}
\newcommand{\mcf}{\mathcal{F}}
\newcommand{\exc}{\textup{exc}}
\newcommand{\tsp}{\textup{tsp}}
\newcommand{\minexc}{\textup{exc}}
\newcommand{\bminexc}{\widehat{\textup{exc}}}
\newcommand{\del}{\delta}
\newcommand{\bdel}{\widehat{\delta}}
\newcommand{\Del}{\Delta}
\newcommand{\BDel}{\widehat{\Delta}}
\newcommand{\cl}[1]{\overline{#1}}
\newcommand{\ec}{\mathcal{E}}
\newcommand{\bec}{\widehat{\mathcal{E}}}
\newcommand{\scan}{\textup{\texttt{Scan}}}
\newcommand{\alg}{\textup{\texttt{EC}}}
\newcommand{\balg}{\widehat{\textup{\texttt{EC}}}}
\newcommand{\subroutine}{\textup{\texttt{Subroutine}}}
\newcommand{\Algo}{\textup{\texttt{Algo}}}
\newcommand{\falg}{f_\Algo}
\newcommand{\flag}{\text{flag}}
\newcommand{\true}{\text{true}}
\newcommand{\false}{\text{false}}
\newcommand{\yy}[1]{{\color{green} #1}}
\tikzstyle{vertex}=[fill=black, draw=black, shape=circle, minimum size = 5pt,inner sep=0pt]
\tikzstyle{edge}=[-]
\tikzstyle{directed edge}=[<-]
\newif\iflong
\title{Approximating TSP walks in subcubic graphs}
\author{Michael C. Wigal\footnote{Supported by an NSF Graduate Research Fellowship under Grant No. DGE-1650044}, Youngho Yoo\footnote{Partially supported by the Natural Sciences and Engineering Research Council of Canada (NSERC), PGSD2-
532637-2019}, and Xingxing Yu\footnote{Partially supported by NSF Grant DMS 1954134 }}
\date{School of Mathematics \\ Georgia Institute of Technology \\ Atlanta, GA, USA \\
{\small {\texttt{\{wigal, yyoo41, yu\}@gatech.edu}}}\\
 \bigskip
\today}
\begin{document}

	\maketitle
	
		\begin{abstract}
    We prove that every simple 2-connected subcubic graph on $n$ vertices with $n_2$ vertices of degree 2 has a TSP walk of length at most $\frac{5n+n_2}{4}-1$, confirming a conjecture of Dvo\v r\'ak, Kr\'al', and Mohar.
    This bound is best possible; there are infinitely many subcubic and cubic graphs whose minimum TSP walks have lengths $\frac{5n+n_2}{4}-1$ and $\frac{5n}{4} - 2$ respectively. 
    We characterize the extremal subcubic examples meeting this bound.
    We also give a quadratic-time combinatorial algorithm for finding such a TSP walk.
    In particular, we obtain a $\frac{5}{4}$-approximation algorithm for the graphic TSP on simple cubic graphs, improving on the previously best known approximation ratio of $\frac{9}{7}$.
\end{abstract}

\newpage

	\section{Introduction} \label{sec:intro}
	A {\it walk} in a graph $G$ is an alternating sequence of vertices and edges starting and ending with vertices such that each edge joins its two neighboring vertices in the sequence.  A walk is {\it closed} if its starting vertex is equal to its ending vertex, and \textit{spanning} if it visits every vertex of $G$ at least once. 
	In an (edge-)weighted graph, the \textit{length} of a walk is the sum of the weights of the edges in the walk.
	
	The famous {\it Travelling Salesperson Problem} (TSP) asks for a spanning closed walk (a {\it TSP walk}) of minimum length in a weighted graph $G$.
	Let us denote this minimum length by $\tsp(G)$. 
	It is not possible to approximate the TSP within any constant factor of $\tsp(G)$ unless $P = NP$; otherwise, one could solve the Hamiltonian cycle problem, one of Karp's original NP-complete problems \cite{Ka72}. An important special case which admits a constant factor approximation is the \textit{metric TSP} in which the lengths of shortest walks between pairs of vertices form a metric, a natural assumption for many applications. A further specialization of the metric TSP is the {\it graphic TSP} in which every edge has weight 1. 
	
	The graphic TSP still contains the Hamiltonian cycle problem, and is thus NP-hard to solve exactly.
	On the other hand,
	Christofides \cite{C76} and independently Serdyukov \cite{serdyukov1978nekotorykh, van2020historical} gave a $\frac{3}{2}$-approximation for the metric TSP in 1976 and 1978 respectively. For many years, this had remained the best approximation ratio for any nontrivial special case of the metric TSP.  
	The first improvement to this ratio was made in 2005 by Gamarnik, Lewenstein, and Sviridenko \cite{GLS04} who gave a $(\frac{3}{2} - \frac{5}{389})$-approximation algorithm for the special case of the graphic TSP on 3-connected cubic graphs (a graph is {\it cubic} if all of its vertices have degree 3). Following this result, Gharan, Saberi, and Singh \cite{GSS11} gave a $(\frac{3}{2}-\epsilon)$-approximation algorithm for the general graphic TSP. 
	Then M{\"o}mke and Svensson \cite{MS11} gave a novel approach for a 1.461-approximation algorithm for the graphic TSP, which was shown to be in fact a $\frac{13}{9}$-approximation by Mucha \cite{M14}. Later, Seb{\H o} and Vygen \cite{SV14} presented a new algorithm for an improved $\frac{7}{5}$-approximation for the graphic TSP.
	For the metric TSP, the $\frac{3}{2}$ ratio was only very recently improved by Karlan, Klein, and Gharan \cite{karlin2021slightly} to $(\frac{3}{2} - \varepsilon)$ for some constant $\varepsilon > 10^{-36}$. 
	
	A further special case of the graphic TSP, namely on subcubic graphs,  has received significant attention (a graph is {\it subcubic} if all of its vertices have degree at most 3). 
	Subcubic and cubic graphs are among the simplest classes of graphs which retain the inapproximability of the metric TSP; the general metric and graphic TSPs are NP-hard to approximate within a $\frac{123}{122}$ and $\frac{185}{184}$-factor of $\tsp(G)$ respectively \cite{karpinski2015new, L12}.
    Even when restricted to subcubic and cubic graphs, it remains NP-hard to approximate within a $\frac{685}{684}$ and $\frac{1153}{1152}$-factor respectively \cite{KS15}.
	Furthermore, subcubic graphs are known to exhibit the worst-case behavior in the well-known ``$\frac43$-integrality gap conjecture'' from the 80's (see \cite{G95}), which asserts that the standard ``subtour elimination'' linear program relaxation for the metric TSP has an integrality gap of $\frac{4}{3}$. 
	This $\frac43$-integrality gap can be asymptotically realized by a family of subcubic graphs (e.g. \cite{benoit2008finding}).

	Note that a polynomial-time constructive proof of the $\frac{4}{3}$-integrality gap would yield a $\frac{4}{3}$-approximation algorithm.
	Motivated by this, Aggarwal, Garg, and Gupta \cite{AGG11} gave a $\frac{4}{3}$-approximation for 3-connected cubic graphs. This approximation ratio was extended to 2-connected cubic graphs by Boyd et al. \cite{BSSS14}, and to 2-connected subcubic graphs by M{\"o}mke and Svensson \cite{MS11}.
	The $\frac{4}{3}$ ratio was then slightly improved for cubic graphs to $(\frac{4}{3} - \frac{1}{61326})$ by
	Correa, Larre{\'e}, and Soto \cite{CLS15} and independently to $(\frac{4}{3} - \frac{1}{8754})$ by Zuylen \cite{Z16}, which was further improved to 1.3 by Candr{\'a}kov{\'a} and Lukot'ka \cite{CL18}, and later to $\frac{9}{7}$ by Dvo\v r\'ak, Kr\'al', and Mohar \cite{DKM17}.
	
	\ifx
	There have been results obtained for even more specialized families of graphs. Motivated by a conjecture of Barnette concerning Hamiltonian cycles, Correa et al. \cite{CLS15} showed that every 3-connected bipartite cubic planar $n$-vertex graph has a TSP walk of length at most $(\frac{4}{3} -\frac{1}{18})n$ (while Kawarabayashi and Ozeki \cite{KO14} proved the $\frac{4}{3}n$ bound for all 3-connected planar $n$-vertex graphs). 
	Note however that there is a linear-time approximation scheme for general planar graphs by Klein \cite{K08}.
	For cubic bipartite graphs, Karp and Ravi \cite{KR14} gave a $\frac{9}{7}$-approximation, which was improved to $\frac{5}{4}$ by Zuylen \cite{Z16}.
	\fi
 
    Let $G$ be a simple 2-connected subcubic graph.
    We write $n(G)$ to denote the number of vertices in $G$, and $n_2(G)$ to denote the number of degree 2 vertices in $G$.
    Dvo\v r\'ak, Kr{\'a}l', and Mohar \cite{DKM17} showed that $G$ has a TSP walk of length at most $\frac{9n(G)+2n_2(G)}{7}  -1$.
    They also constructed infinitely many subcubic (respectively, cubic) graphs whose minimum TSP walks have lengths $\frac{5n(G)+n_2(G)}{4} -1$ (respectively, $\frac{5n(G)}{4}-2$), and conjectured that $\frac{5n(G)+n_2(G)}{4} -1$ is the right bound. In this paper, we prove this conjecture. 
    \begin{theorem}\label{thm:tspwalk}
        Let $G$ be a 2-connected simple subcubic graph. Then  $\tsp(G)\le \frac{5n(G)+n_2(G)}{4} -1$. Moreover, a TSP walk of length at most $\frac{5n(G)+n_2(G)}{4} -1$ can be found in $O(n(G)^2)$ time.
    \end{theorem}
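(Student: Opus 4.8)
The plan is to restate the problem in terms of connected spanning even subgraphs and to induct on $n(G)$, after first strengthening the statement so that the reduction steps go through. Recall that $G$ has a closed spanning walk of length $\ell$ precisely when it has a spanning connected sub-multigraph with $\ell$ edges all of whose degrees are even, and that one never needs an edge more than twice; so it suffices to exhibit such a subgraph with at most $\frac{5n(G)+n_2(G)}{4}-1$ edges. Writing the excess over a spanning tree as $\tsp(G)-n(G)+1$, the target becomes the clean inequality $\tsp(G)-n(G)+1\le \tfrac14\bigl(n(G)+n_2(G)\bigr)$. For the induction to close I would prove a stronger statement: \emph{(i)} allow $G$ to be a $2$-connected subcubic \emph{multigraph} of bounded edge-multiplicity, since digons are produced both when suppressing degree-$2$ vertices and when splitting along $2$-edge-cuts; and \emph{(ii)} prove a rooted version in which two boundary vertices (or edges) are prescribed and the walk is required to pass through them in a prescribed pattern, which is exactly the information needed to glue partial solutions across small edge cuts.

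The first block of the proof is a chain of connectivity reductions that strip $G$ down to a highly connected core. Vertices of degree $1$ cannot occur; a maximal path through degree-$2$ vertices is suppressed to a single edge, which alters the walk length and the quantities $n(G),n_2(G)$ in a way that exactly preserves the budget. A $2$-edge-cut $\{e_1,e_2\}$ must be used exactly once by each of $e_1,e_2$ in any spanning closed walk, so on each side the walk is a spanning walk between the two stub vertices; one then splits $G$ along the cut, completes each side to a smaller $2$-connected multigraph (identifying the two stubs, or joining them by a new, possibly parallel, edge), applies the rooted hypothesis to both sides, and recombines, checking that the two budgets add with the additive $-1$ counted exactly once. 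A non-trivial $3$-edge-cut in a cubic graph is handled by the standard device of contracting each side to a single vertex, solving the two smaller cubic instances, and stitching the solutions along the three cut edges according to the rooted data. What remains is an (essentially) cyclically $4$-edge-connected cubic graph, together with a handful of genuinely small graphs (such as $K_4$ and the smallest such graphs) which are verified directly.

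The heart of the argument, and the step I expect to be the main obstacle, is proving $\tsp(G)\le \frac{5n(G)}{4}-1$ for a cyclically $4$-edge-connected cubic graph, where no cheap reduction remains and the constant $\frac54$ must be hit exactly. The natural starting point is a $2$-factor $F$ of $G$ (a spanning union of cycles; one exists since $G$ is bridgeless and cubic), which already contributes $n(G)$ edges, after which the cycles of $F$ must be linked into a single even closed walk by adding edges of $E(G)\setminus E(F)$ and rerouting locally. The extra cost of this linking is governed by how the non-$F$ edges join the cycles of $F$ --- essentially a $T$-join/matching-type quantity on the ``cycle graph'' of $F$ --- and one chooses $F$, among all $2$-factors or by a local-exchange argument, so as to minimize a weight that penalizes short cycles and poorly connected ones. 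A discharging or local analysis that exploits cyclic $4$-edge-connectivity then shows the total extra cost never exceeds $\frac{n(G)}{4}-1$: long cycles and richly connected clusters are cheap, while the only configurations forcing the maximum are built from triangles, and following the cases of equality through this analysis is what yields the characterization of the extremal graphs.

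Finally, the algorithmic claim follows by making the whole argument constructive: suppressing degree-$2$ paths, finding and splitting along $2$- and $3$-edge-cuts, and --- in the core --- computing a perfect matching and its complementary $2$-factor and then performing the cycle-merging, are all polynomial, and since each reduction strictly decreases $n(G)$ the recursion tree has only $O(n(G))$ nodes. The care needed for the stated $O(n(G)^2)$ bound is to implement the cut-finding and the matching/$T$-join subroutines efficiently and to avoid recomputing global structure at every recursive call, so that each node costs $O(n(G))$; this is routine once the reductions are organized correctly, but it is the only place where the running-time argument (as opposed to mere existence) needs attention.
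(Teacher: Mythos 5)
There is a genuine gap, and it sits exactly where you flag ``the main obstacle.'' Your plan reduces to a cyclically $4$-edge-connected cubic core and proposes to finish there by taking a $2$-factor $F$ and linking its cycles at total extra cost at most $\frac{n(G)}{4}-1$ via discharging. That step is asserted, not proved, and it is unlikely to be provable in that form: a $2$-factor forces every vertex onto a cycle, and in a cubic graph of girth $4$ a $2$-factor can have as many as $n(G)/4$ components; merging $c$ components into one connected even subgraph costs on the order of $2(c-1)$ extra edges, i.e.\ up to roughly $n(G)/2$, far over budget. This is precisely why $2$-factor-based arguments historically stalled at ratios like $\frac{4}{3}$ and $\frac{9}{7}$ for cubic graphs. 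The paper reaches $\frac54$ by abandoning spanning $2$-regular subgraphs altogether: it works with \emph{even covers} (disjoint cycles \emph{plus isolated vertices}, each isolated vertex costing $1$ rather than forcing a short cycle), minimizes $\exc(F)=2c(F)+i(F)$, and converts via $\tsp(G)=\exc(G)-2+n(G)$. Without something playing the role of those isolated vertices, the core step of your argument does not close.

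A second, related problem is that your reductions point at the wrong place for tightness. The extremal graphs for $\frac{5n(G)+n_2(G)}{4}-1$ are the $K_{2,3}$-constructible graphs, which are riddled with $4$-cycles attached across $2$-edge-cuts; they are nowhere near cyclically $4$-edge-connected, so equality is achieved entirely inside your gluing steps, not in the core. There the additive constants do not combine as casually as ``the $-1$ is counted exactly once'': one needs, for each rooted piece $(H,e)$, half-integral bounds of the form $\del(H,e)\le-\frac12$ and $\del(H,e)+\bdel(H,e)\le 0$ (the paper's Theorem \ref{thm:main}), together with a characterization of when equality holds, to certify that routing through one side and around the other always stays within budget. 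Your rooted-version instinct is the right one, but the proposal supplies neither the quantitative form of the rooted invariant nor the equality analysis, and the claim that a $2$-edge-cut ``must be used exactly once by each'' edge is itself false (a closed walk may cross a cut twice through one edge and not at all through the other), so even the splitting step needs more care than described.
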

    In particular, we obtain a $\frac{5}{4}$-approximation algorithm for the graphic TSP on simple cubic graphs.
    We remark that our algorithm is purely combinatorial and deterministic. We also characterize the extremal examples of Theorem \ref{thm:tspwalk}; that is, the 2-connected simple subcubic graphs $G$ such that $\tsp(G) = \frac{5n(G)+n_2(G)}{4}-1$ (see Theorem \ref{thm:extremalcharacterization}). As pointed out by Dvo\v r\'ak et al. \cite{DKM17}, Theorem \ref{thm:tspwalk} is false for non-simple graphs. This can be seen from the graph obtained from three internally disjoint paths between two vertices, each of length $2k+1$, by the addition of parallel edges so that it becomes cubic.

    As in \cite{DKM17}, rather than working with Eulerian multigraphs obtained from spanning connected subgraphs by adding multiple edges (as often done in the literature), we consider spanning subgraphs $F$ of $G$ in which every vertex has degree 0 or 2. That is, $F$ is a spanning subgraph consisting of vertex-disjoint cycles and isolated vertices. 
    We call such a subgraph $F$ an {\it even cover} of $G$.
    Let $c(F)$ denote the number of cycles in $F$ and $i(F)$ denote the number of isolated vertices in $F$. 
    Define the {\it excess} of $F$ to be $$\exc(F)=2c(F)+i(F).$$
    For a graph $G$, let $\ec(G)$ denote the set of even covers of $G$, and define the {\it excess} of $G$ as  
    $$\exc(G)=\min_{F\in \ec(G)}\exc(F).$$

    For example, consider the graph $\Theta$ which consists of three internally disjoint paths between two vertices, each path with $k$ vertices of degree 2. It is easy to see that an even cover consisting of a cycle and $k$ isolated vertices obtains the minimum excess. Thus for $k\geq 1$,
    \begin{align*}
        \exc(\Theta) = 2+k  \le \frac{(3k+2)+3k}{4} + 1 = \frac{n(\Theta)+n_2(\Theta)}{4} + 1, 
    \end{align*}
    with equality when $k = 1$ (in which case $\Theta \cong K_{2,3}$).

    It is observed in  \cite{DKM17} that if $G$ is a subcubic graph, then \begin{equation}
    \tsp(G)=\exc(G)-2+n(G), \label{eqn:tspexc}
    \end{equation}
    and that a conversion from an even cover $F\in \ec(G)$ to a TSP walk in $G$ of length $\exc(F)-2+n(G)$ can be done in linear time.
    Thus, to prove Theorem~\ref{thm:tspwalk}, it suffices to show that 
    \begin{equation}
        \exc(G) \le \frac{n(G)+n_2(G)}{4} + 1. \label{eqn:ecexc}
    \end{equation}
    and that an even cover $F$ of $G$ satisfying this bound can be found in quadratic time.
    Indeed, we will see that (\ref{eqn:ecexc}) follows from a more technical result (Theorem \ref{thm:main}) that bounds $\exc(F)- \frac{n(G)+n_2(G)}{4}$ for certain sets of even covers $F$ of $G$. In Section 2, we develop our key definitions and state Theorem \ref{thm:main}. In Section \ref{sec:thetachains}, we provide some technical lemmas on the structure of the extremal graphs for Theorem \ref{thm:main}, which we call \emph{$\theta$-chains}. We complete the proof of Theorem~\ref{thm:main} in Section \ref{sec:mainproof}. In Section \ref{sec:tight}, we characterize extremal graphs for Theorem \ref{thm:tspwalk}. In Section \ref{sec:algo}, we outline a quadratic-time algorithm that finds an even cover $F$ in simple 2-connected subcubic graphs $G$ with $\exc(F) \leq \frac{n(G)+n_2(G)}{4}+1$.

    We end this section with some notation. For a positive integer $k$, let $[k] = \{1,\ldots,k\}$. 
    If $G$ and $H$ are graphs, we write $G \cup H$ (respectively, $G \cap H$) to denote the graph with vertex set $V(G) \cup V(H)$ (respectively, $V(G) \cap V(H))$ and edge set $E(G) \cup E(H)$ (respectively, $E(G) \cap E(H))$. 
    Let $G$ be a graph. If $S$ is a set of vertices or a set of edges, we let $G - S$ denote the subgraph of $G$ obtained by deleting elements of $S$ as well as edges incident with a vertex in $S$. When $S=\{s\}$ is a singleton, we simply write $G-s$.
    For a collection of 2-element subsets of $V(G)$, we write $G + S$ for the graph with vertex set $V(G)$ and edge set $E(G) \cup S$. However, for $x,y\in V(G)$ we use $G+xy$ to denote the graph obtained from $G$ by adding a (possibly parallel) edge between $x$ and $y$. 
    {For a subgraph $H \subseteq G$ and a set $S \subseteq V(G)$, we let $H + S$ denote the subgraph of $G$ such that $V(H+S) = V(H) \cup S$ and $E(H+S) = E(H)$.} For $S\subseteq V(G)$, we use $N(S)$ to denote the neighborhood of $S$ in $G$. If $S=\{s\}$ is a singleton, we simply write $N(s)$. When $|N(S)| \in \{1,2\}$, {\it suppressing} $S$ means deleting $S$ and adding a (possibly loop or parallel) edge between the vertices of $N(S)$. 
    When $S=\{s\}$ is a singleton, \textit{suppressing $s$} means suppressing $\{s\}$.

	\section{Preliminaries} \label{sec:minexcchains}
	
	In order to help with induction, we consider even covers which contain or avoid a specified edge. 
	Let $G$ be a graph and let $e\in E(G)$. 
	We write $\ec(G,e)$ to denote the set of even covers of $G$ containing $e$, and $\bec(G,e)$ to denote the set of even covers of $G$ not containing $e$.
	Define 
	\begin{align*}
	    \minexc(G,e) &:= \min_{\substack{F\in \ec(G,e)}} \exc(F)- 2 \\
	    \bminexc(G,e) &:= \min_{\substack{F\in \bec(G,e)}} \exc(F)
	\end{align*}
	Clearly, we have $\exc(G) = \min\{\minexc(G,e)+2, \bminexc(G,e)\}$ for any edge $e\in E(G)$.
	The ``$-2$'' in the definition of $\minexc(G,e)$ leads to a natural interpretation of the quantities $\del(G,e)$ and $\bdel(G,e)$ defined below, and also results in simpler calculations as it accounts for the fact that the cycle $C$ of $F$ containing $e$ will often only be used as a path $C-e$ as part of a larger cycle (see Propositions \ref{prop:subcubicchains} and \ref{prop:contractchain}).
	
	To prove \eqref{eqn:ecexc}, it will be convenient to define the following parameters for a graph $G$ and an edge $e\in E(G)$:
     \begin{align*}	
		\del(G,e) &:= \minexc(G,e) -\frac{n(G)+n_2(G)}{4},\\
		\bdel(G,e) &:= \bminexc(G,e) -\frac{n(G)+n_2(G)}{4}.
      \end{align*}	
 Note that if every vertex of $G$ has degree 2 or 3 (for instance, if $G$ is subcubic and 2-connected), then $\del(G,e)$ and $\bdel(G,e)$ are always half-integral since $n(G)+n_2(G) = (n(G)-n_2(G))+ 2n_2(G)$ where $(n(G)-n_2(G))$ is the number of vertices of odd degree in $G$, which is always even.
 
	A \emph{subcubic chain} $C$ is a simple connected subcubic graph, written as an alternating sequence $C=xe_0B_1e_1B_2\dots B_ke_ky$ for some nonnegative integer $k$, satisfying the following properties (see Figure \ref{fig:subcubic_chain}):

	\begin{itemize}
	    \item $\{e_0,\dots, e_k\}$ is the set of cut-edges of $C$,
	    \item $\{B_0,B_1,\dots,B_k,B_{k+1}\}$ is the set of connected components of $C-\{e_0,\dots,e_k\}$, where $V(B_0)=\{x\}$ and $V(B_{k+1})=\{y\}$,
	    \item $B_i$ is either a single vertex or 2-connected for all $i\in[k]$, and
	    \item each $e_i$ has one endpoint in $B_i$ and one endpoint in $B_{i+1}$ for all $i=0,\dots,k$.
	\end{itemize}
		\begin{figure}[H]
	    \centering
	    \includegraphics[scale=0.5]{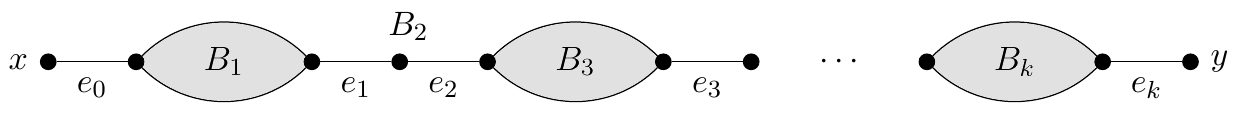}
	    \caption{Subcubic chain}
	    \label{fig:subcubic_chain}
	\end{figure}
	We say that $C$ has \emph{end points} $x,y$ and has \emph{end edges} $e_0$ and $e_k$. 
	A subcubic chain is \emph{trivial} if $k=0$ (that is, $C$ is an edge $xy$), and \emph{nontrivial} otherwise.
	
	Let $C=xe_0B_1e_1B_2\dots B_ke_ky$ be a nontrivial subcubic chain.
	For $i\in [k]$, let $x_i$ denote the endpoint of $e_{i-1}$ in $B_i$ and let $y_i$ denote the endpoint of $e_i$ in $B_i$. (Note that $x_i\ne y_i$ when $n(B_i)\ne 1$, as $C$ is subcubic.) 
	We define $\cl{B_i} = B_i + \cl{e_i}$ where $\cl{e_i} = x_iy_i$, and $\cl C = C-\{x,y\}+e_C$ where $e_C = x_1y_k$.
	We call each $(\cl{B_i},\cl{e_i})$ a \emph{chain-block} of $C$, and $\cl C$ the \emph{closure} of $C$. Note that the closure of a nontrivial subcubic chain $C$ is a subcubic graph with no cut-vertex such that $\cl C - e_C$ is simple.
	If $C$ is a trivial subcubic chain, we define $\minexc(\cl C,e_C)=\bminexc(\cl C,e_C)=\del(\cl C,e_C)=\bdel(\cl C,e_C)=0$.
	
	\begin{prop}	\label{prop:subcubicchains}
	Let $C=xe_0B_1e_1B_2\dots B_ke_ky$ be a subcubic chain, and let
	$\{(\cl{B_i},\cl{e_i}): i\in[k]\}$ denote the chain-blocks of $C$.
	Then
     \begin{itemize}
         \item $\minexc(\cl C,e_C) = \sum_{i=1}^k \minexc(\cl{B_i}, \cl{e_i})$,
         \item $\bminexc(\cl C,e_C) = \sum_{i=1}^k \bminexc(\cl{B_i},\cl{e_i})$,
         \item $\del(\cl C,e_C) = \sum_{i=1}^k \del(\cl{B_i}, \cl{e_i})$, and
         \item $\bdel(\cl C,e_C) = \sum_{i=1}^k \bdel(\cl{B_i},\cl{e_i})$. 
     \end{itemize}	
	\end{prop}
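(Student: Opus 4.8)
The plan is to decompose each even cover of $\cl C$ into even covers of the chain-blocks $\cl{B_i}$ compatibly with $\exc$, $n$, and $n_2$, and then minimize termwise. The cases $k\in\{0,1\}$ are immediate: for $k=0$ both sides vanish by the convention for trivial chains, and for $k=1$ one has $\cl C=\cl{B_1}$ and $e_C=\cl{e_1}$; so assume $k\ge 2$. First I would record the ``necklace'' shape of $\cl C$: its edge set is the disjoint union of $E(B_1),\dots,E(B_k)$ with the set $I=\{e_1,\dots,e_{k-1},e_C\}$ of inter-block edges, and contracting each $B_i$ to a point turns $\cl C$ into a cycle through the blocks along $I$. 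For each $i$, let $a_i$ be the edge of $I$ meeting $B_i$ at $x_i$ (so $a_i=e_{i-1}$ for $i\ge 2$ and $a_1=e_C$) and $b_i$ the edge of $I$ meeting $B_i$ at $y_i$ (so $b_i=e_i$ for $i\le k-1$ and $b_k=e_C$); then $b_i=a_{i+1}$ for $i<k$ and $b_k=a_1$. The key observation is an all-or-nothing dichotomy: for every $F\in\ec(\cl C)$, either $I\subseteq E(F)$ or $I\cap E(F)=\emptyset$. This follows from a handshake argument on $F\cap B_i$, whose degree sum is even and whose only possible odd-degree vertices are $x_i$ and $y_i$: it forces $a_i\in E(F)\iff b_i\in E(F)$ for every $i$, and chaining this around the cycle $b_i=a_{i+1}$, $b_k=a_1$ propagates the same status to all of $I$. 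Since $e_C\in I$, the first alternative describes exactly $\ec(\cl C,e_C)$ and the second exactly $\bec(\cl C,e_C)$.

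Next I would treat the two cases. If $I\cap E(F)=\emptyset$, then $F=\bigsqcup_i(F\cap B_i)$ with each $F\cap B_i\in\ec(B_i)$, that is, $F\cap B_i$ is an even cover of $\cl{B_i}$ avoiding $\cl{e_i}$; conversely any tuple of such even covers reassembles to such an $F$, so $\bec(\cl C,e_C)$ corresponds bijectively to $\prod_i\bec(\cl{B_i},\cl{e_i})$, and $\exc(F)=\sum_i\exc(F\cap B_i)$ since $\exc$ is additive over disjoint unions; minimizing termwise gives $\bminexc(\cl C,e_C)=\sum_i\bminexc(\cl{B_i},\cl{e_i})$. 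If $I\subseteq E(F)$, then for each $i$ the only odd-degree vertices of $F\cap B_i$ are $x_i$ and $y_i$, so $F\cap B_i$ consists of a single $x_i$--$y_i$ path $P_i$ (a lone vertex when $x_i=y_i$) together with some cycles and isolated vertices; then $F_i:=(F\cap B_i)+\cl{e_i}$ closes $P_i$ into a cycle and lies in $\ec(\cl{B_i},\cl{e_i})$, while in $F$ the paths $P_1,\dots,P_k$ are spliced by the edges of $I$ into a single cycle. Passing from the disjoint family $(F_i)$ to $F$ thus replaces the $k$ cycles $P_i+\cl{e_i}$ by one cycle and leaves all remaining cycles and isolated vertices untouched, so $\exc(F)=\sum_i\exc(F_i)-2(k-1)$; moreover $F\mapsto(F_i)$ is a bijection onto $\prod_i\ec(\cl{B_i},\cl{e_i})$, with inverse $(F_i)\mapsto\bigcup_i(F_i-\cl{e_i})\cup I$. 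Minimizing termwise and using $\minexc(\cl{B_i},\cl{e_i})=\min_{F_i}\exc(F_i)-2$,
\[
\minexc(\cl C,e_C)=\min_{(F_i)}\Big(\sum_i\exc(F_i)\Big)-2(k-1)-2=\sum_i\big(\min_{F_i}\exc(F_i)-2\big)=\sum_i\minexc(\cl{B_i},\cl{e_i}),
\]
the shift $-2(k-1)-2=-2k$ being exactly one ``$-2$'' per chain-block.

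For the $\del$ and $\bdel$ identities it remains to check $n(\cl C)+n_2(\cl C)=\sum_i\big(n(\cl{B_i})+n_2(\cl{B_i})\big)$. Here $V(\cl C)=\bigsqcup_i V(B_i)=\bigsqcup_i V(\cl{B_i})$, and every $v\in V(B_i)$ has the same degree in $\cl C$ as in $\cl{B_i}$: forming $\cl C$ from $B_i$ adjoins the edges $a_i$ at $x_i$ and $b_i$ at $y_i$, forming $\cl{B_i}$ from $B_i$ adjoins $\cl{e_i}=x_iy_i$, and both operations raise the degrees of $x_i$ and of $y_i$ by one each (a loop at $x_i=y_i$ counted twice). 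Hence $\cl C$ and $\bigsqcup_i\cl{B_i}$ have the same degree sequence, in particular the same number of degree-$2$ vertices; dividing by $4$ and subtracting turns the two identities just established into $\del(\cl C,e_C)=\sum_i\del(\cl{B_i},\cl{e_i})$ and $\bdel(\cl C,e_C)=\sum_i\bdel(\cl{B_i},\cl{e_i})$.

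The hard part will be the bookkeeping in the degenerate cases: single-vertex blocks $B_i$, where $\cl{e_i}$ is a loop, $x_i=y_i$, and the ``path'' $P_i$ collapses to a point; and, when $k=2$, a pair of blocks joined by the two edges of $I$, so that the necklace cycle has length $2$. I would phrase the handshake argument, the cycle-splicing count $\exc(F)=\sum_i\exc(F_i)-2(k-1)$, and the degree identity so that the ``loops are counted twice'' convention absorbs these cases uniformly rather than via a separate analysis for each.
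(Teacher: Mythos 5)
Your proof is correct and follows essentially the same route as the paper's: the bijection $F \leftrightarrow (F_1,\dots,F_k)$ with $F_i = (F\cap B_i)+\cl{e_i}$ in the $\ec$ case and $F_i = F\cap B_i$ in the $\bec$ case, together with the excess count $\exc(F)=\sum_i \exc(F_i)-2(k-1)$ and the additivity of $n$ and $n_2$ over chain-blocks. You merely make explicit what the paper leaves implicit, namely the parity argument for the all-or-nothing status of the inter-block edges and the bookkeeping for single-vertex blocks and loops.
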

	
	\begin{proof}
	If $C$ is trivial then the proposition is true by definition (an empty sum is defined to be 0), so we may assume that $C$ is nontrivial.
	Note that a cycle in $\cl C$ contains $e_C$ if and only if it contains all of $e_1,\dots,e_{k-1}$.
	This gives a natural bijective correspondence between even covers $F\in \ec(\cl C,e_C)$ and tuples of even covers $(F_1,\dots,F_k)$ where $F_i \in \ec(\cl{B_i},\cl{e_i})$ for each $i\in [k]$.
	Indeed, this correspondence is obtained by ``splitting'' the cycle $D$ of $F$ containing $e_C$ into $k$ cycles,  $(D\cap \cl{B_i})+\cl{e_i}$ for $i\in[k]$.
	With this correspondence, we have $\exc(F) = 2 + \sum_{i=1}^k (\exc(F_i)-2)$.
	Hence,
	\begin{align*}
	    \minexc(\cl C,e_C) &= \min_{\substack{F\in \ec(\cl C,e_C)}} \exc(F) - 2 \\
	    &= \sum_{i=1}^k \min_{\substack{F_i \in \ec(\cl{B_i},\cl{e_i})}} (\exc(F_i)-2)\\
	    &= \sum_{i=1}^k \minexc(\cl{B_i}, \cl{e_i}).
	\end{align*}
	Since $n(\cl{C}) = \sum_{i=1}^k n(\cl{B_i})$ and $n_2(\cl{C}) = \sum_{i=1}^k n_2(\cl{B_i})$, this also implies $\del(\cl{C},e_C) = \sum_{i=1}^k \del(\cl{B_i},\cl{e_i})$.
	
	\iflong
	Similarly, there is a natural bijective correspondence between even covers $F\in \bec(\cl C,e_C)$ and tuples $(F_1,\dots,F_k)$ where $F_i \in \bec(\cl{B_i},\cl{e_i})$ for each $i\in[k]$. That is, $F_i$ is the restriction of $F$ on $B_i$ for all $i \in [k]$. Moreover, $\exc(F) = \sum_{i=1}^k \exc(F_i)$.  Hence,
	\begin{align*}
	    \bminexc(\cl C,e_C) &= \min_{\substack{F\in \bec(\cl C,e_C)}} \exc(F) \\
	    &= \sum_{i=1}^k \min_{\substack{F_i \in \bec(\cl{B_i},\cl{e_i})}} \exc(F_i)\\
	    &= \sum_{i=1}^k \bminexc(\cl{B_i}, \cl{e_i}).
	\end{align*}
	This similarly gives $\bdel(\cl C,e_C) = \sum_{i=1}^k \bdel(\cl{B_i},\cl{e_i})$.
	\fi
	\end{proof}
	
	The parameters $\del(\cl C,e_C)$ and $\bdel(\cl C,e_C)$ can be interpreted as the ``difference'' in the $\del$ or $\bdel$ of the overall graph $G$ made by the presence of the subcubic chain $C$ compared to a trivial chain (a single edge).
	This is formalized in the next proposition.
	
	Let $G$ be a graph containing a nontrivial subcubic chain $C=xe_0B_1\dots B_ke_ky$ such that $C-\{x,y\}$ is a connected component of $G-\{e_0,e_k\}$.
	In this case, we say that $C$ is a \emph{subcubic chain of $G$}. 
	If $C$ is a subcubic chain of $G$, we write $G/C$ to denote the graph obtained by suppressing $V(C)\setminus \{x,y\}$, and write $e_{G/C}$ to denote the resulting edge.
	We say that $G/C$ is obtained from $G$ by \emph{suppressing $C$}.
    A cycle in $G$ containing the edge $e_0$ (hence all of $\{e_0,\dots,e_k\}$) is said to be a \emph{cycle through} $C$, and an \textit{even cover through $C$}  is an even cover of $G$ containing a cycle through $C$.

	\begin{prop}\label{prop:contractchain}
	Let $C$ be a subcubic chain of a graph $G$, and let $e$ be a cut-edge of $C$.
	Then $\del(G,e)=\del(G/C,e_{G/C})+\del(\cl C,e_C)$ and $\bdel(G,e)=\bdel(G/C,e_{G/C})+\bdel(\cl C,e_C)$.
	\end{prop}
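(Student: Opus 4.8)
The plan is to prove the two identities first for $\minexc$ and $\bminexc$, by explicit bijections between even covers, and then to obtain the $\del$ and $\bdel$ identities by subtracting an elementary vertex-count identity. The starting point is the fact (essentially recorded in the excerpt for $e_0$) that a cycle of $G$ containing \emph{any} one of $e_0,\dots,e_k$ contains all of them: since the blocks $B_1,\dots,B_k$ are attached to one another and to the rest of $G$ only through the edges $e_0,\dots,e_k$, a cycle that enters some $B_i$ is forced to leave through the unique cut-edge it has not yet used. Hence, for \emph{every} cut-edge $e$ of $C$, $\ec(G,e)$ equals the set of even covers of $G$ through $C$ and $\bec(G,e)$ equals the set of even covers of $G$ not through $C$; in particular $\minexc(G,e)$ and $\bminexc(G,e)$ do not depend on the choice of cut-edge $e$ of $C$.

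For the $\minexc$ identity, write $S:=V(C)\setminus\{x,y\}$ and construct a bijection between even covers $F$ of $G$ through $C$ and pairs $(F',F'')$ with $F'\in\ec(G/C,e_{G/C})$ and $F''\in\ec(\cl C,e_C)$. Given $F$, let $D$ be its cycle through $C$; then $D$ meets $S$ in a path $P$ from $x_1$ to $y_k$. Take $F'$ to be $F$ with $S$ deleted and the part of $D$ inside $S$ replaced by the edge $e_{G/C}$, and take $F''$ to be the restriction of $F$ to $S$ together with $e_C$, so that $P$ closes up to the cycle of $F''$ through $e_C$. Writing $c_0$ and $i_0$ for the numbers of cycles and isolated vertices of $F$ contained in $S$, one checks that $c(F')=c(F)-c_0$, $i(F')=i(F)-i_0$, $c(F'')=1+c_0$, $i(F'')=i_0$; hence $\exc(F')=\exc(F)-2c_0-i_0$ and $\exc(F'')=2+2c_0+i_0$, so $(\exc(F')-2)+(\exc(F'')-2)=\exc(F)-2$. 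The inverse map splices the cycle of $F''$ through $e_C$ into the cycle of $F'$ through $e_{G/C}$. Minimizing over $F$, equivalently over $(F',F'')$, gives $\minexc(G,e)=\minexc(G/C,e_{G/C})+\minexc(\cl C,e_C)$.

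For $\bminexc$ the argument is analogous and simpler: an even cover $F$ of $G$ not through $C$ uses no edge joining $S$ to $V(G)\setminus S$ (the only such edges being $e_0,e_k$), so it is the disjoint union of its restriction $F'$ to $G-S$, which is exactly an even cover of $G/C$ avoiding $e_{G/C}$, and its restriction $F''$ to $S$, which is exactly an even cover of $\cl C$ avoiding $e_C$; here $c$, $i$, and hence $\exc$ simply add, and minimizing gives $\bminexc(G,e)=\bminexc(G/C,e_{G/C})+\bminexc(\cl C,e_C)$. Finally, $V(G)$ is partitioned into $S$ and $V(G)\setminus S$, and passing to $\cl C$ (respectively $G/C$) leaves the degree of every vertex of $S$ (respectively of $V(G)\setminus S$) unchanged, since the only affected vertices just trade an end edge $e_0$ or $e_k$ for $e_C$ or $e_{G/C}$; thus $n(G)=n(G/C)+n(\cl C)$ and $n_2(G)=n_2(G/C)+n_2(\cl C)$, so $\frac{n(G)+n_2(G)}{4}=\frac{n(G/C)+n_2(G/C)}{4}+\frac{n(\cl C)+n_2(\cl C)}{4}$. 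Subtracting this from the two identities above yields $\del(G,e)=\del(G/C,e_{G/C})+\del(\cl C,e_C)$ and $\bdel(G,e)=\bdel(G/C,e_{G/C})+\bdel(\cl C,e_C)$, as desired.

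The only step requiring real care is the bookkeeping in the first bijection: one must correctly assign the cycles of $F$ lying entirely inside some $B_i$, and the isolated vertices of $F$ inside $S$, to the $\cl C$ side while deleting them on the $G/C$ side, and verify that the splicing operation genuinely inverts restriction. A handful of degenerate cases also deserve a line of comment --- for instance $e_C$ can be a loop (when $k=1$ and $B_1$ is a single vertex) and $e_{G/C}$ can be a parallel edge (when $x$ and $y$ are already adjacent in $G-S$) --- but none of these affect the identities, since $\minexc$, $\bminexc$, $\del$, and $\bdel$ are defined for arbitrary multigraphs.
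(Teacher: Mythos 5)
Your proof is correct and follows essentially the same route as the paper's: split the cycle through $C$ to get the bijection $F\leftrightarrow(F',F_C)$ with $\exc(F)=\exc(F')+\exc(F_C)-2$ for the $\minexc$ identity, restrict for the $\bminexc$ identity, and combine with the additivity of $n$ and $n_2$ over $G/C$ and $\cl C$. You merely spell out the excess bookkeeping and the independence of the choice of cut-edge more explicitly than the paper does.
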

	\begin{proof}
	    Given an even cover $F\in \ec(G,e)$, $e$ is contained in some cycle $D$ in $F$. By splitting $D$ into two cycles $(D \cap G/C) + e_{G/C}$ and $(D \cap C) + e_C$, we obtain from $F$ two even covers $F' \in \ec(G/C,e_{G/C})$ and $F_C \in \ec(\cl{C},e_C)$ satisfying $\exc(F) = \exc(F')+\exc(F_C)-2$. This bijective correspondence gives
	    \begin{align*}
	        \minexc(G,e)
	        &= \min_{\substack{F\in \ec(G,e)}} \exc(F) - 2 \\
	        &= \min_{\substack{F'\in \ec(G/C,e_{G/C})}} (\exc(F')-2) 
	        + \min_{\substack{F_C\in \ec(\cl C,e_C)}} (\exc(F_C) - 2) \\
	        &= \minexc(G/C,e_{G/C})+\minexc(\cl C,e_C).
	    \end{align*}
	    Similarly, for any even cover $F\in \bec(G,e)$, its restriction on $G/C$ is in $\bec(G/C,e_{G/C})$ and its restriction on $\cl{C}$ is in $\bec(\cl{C},e_C)$; and we have $\bminexc(G,e) = \bminexc(G/C,e_{G/C})+\bminexc(\cl C,e_C)$.
	    
	    Since $n(G) = n(G/C)+n(\cl C)$ and $n_2(G)=n_2(G/C)+n_2(\cl C)$, the proposition follows from the definitions of $\del$ and $\bdel$.
	\end{proof}

    \ifx
    While Proposition \ref{prop:contractchain} provides useful intuition, it is too restrictive to be useful in our proofs. 
    The next proposition is a general form which is more applicable.
    
	Let $C_1,\dots,C_k$ be internally disjoint subcubic chains of $G$.
	Then we write $G/\{C_1,\dots,C_k\}$ to denote the graph obtained by successively suppressing $C_1,\dots,C_k$.
	A cycle in $G$ containing a cut-edge of $C$ is said to be a cycle \emph{through $C$}.
	If $C_1,\dots,C_k,D_1,\dots,D_\ell$ are internally disjoint subcubic chains of $G$, then we define $\ec(G,\{C_1,\dots,C_k\},\{D_1,\dots,D_\ell\})$ to be the set of even covers $F$ of $G$ such that for each $i\in[k]$, $F$ contains a cycle through $C_i$, and for each $j\in[\ell]$, $F$ does not contain a cycle through $D_j$.
	
	\begin{prop}
	\label{prop:contractchains}
	Let $G$ be a graph.
	Let $C_1,\dots,C_k,D_1,\dots,D_\ell$ be pairwise internally disjoint subcubic chains of $G$.
	Let $G' = G/\{C_1,\dots,C_k,D_1,\dots,D_\ell\}$.
	Let $\mcf' = \ec(G',\{e_{C_1},\dots,e_{C_k}\},\{e_{D_1},\dots,e_{D_\ell}\})$ and let $\mcf = \ec(G,\{C_1,\dots,C_k\},\{D_1,\dots,D_\ell\})$.
	Then
	\begin{align*}
	    \min_{F\in \mcf} \exc(F) - \frac{n(G)+n_2(G)}{4}
	    &= \min_{F'\in \mcf'} \exc(F')-\frac{n(G')+n_2(G')}{4} + \sum_{i=1}^k \del(\cl{C_i},e_{C_i}) + \sum_{j=1}^\ell \bdel(D_j,e_{D_j}) 
	\end{align*}
	\end{prop}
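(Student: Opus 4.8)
The plan is to reduce to Proposition~\ref{prop:contractchain} by establishing, exactly as in that proof, a bijection
\[
\mcf \;\longleftrightarrow\; \mcf' \times \prod_{i=1}^k \ec(\cl{C_i},e_{C_i}) \times \prod_{j=1}^\ell \bec(\cl{D_j},e_{D_j})
\]
under which the excess is additive, and then taking the minimum over $\mcf$ so that it splits across the factors. The only new ingredient compared with Proposition~\ref{prop:contractchain} is that the ``splitting'' is performed at all of $C_1,\dots,C_k,D_1,\dots,D_\ell$ at once; this is legitimate precisely because the chains are pairwise internally disjoint, so the operation at one chain alters only that chain's interior (and the single edge standing in for it) and leaves every other chain, together with its set of cut-edges, untouched. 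Equivalently, one may induct on $k+\ell$, peeling off one chain at a time through the bijection of Proposition~\ref{prop:contractchain}; the content is the same.

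First I would record a usable description of $\mcf'$: since every edge of an even cover lies on one of its cycles, an even cover of $G'$ contains a cycle through a trivial chain $e$ if and only if it contains $e$, so $\mcf'$ is the set of even covers $F'$ of $G'$ with $e_{C_i}\in E(F')$ for all $i\in[k]$ and $e_{D_j}\notin E(F')$ for all $j\in[\ell]$. Next I would build the bijection. Given $F\in\mcf$: for each $i$, let $Z_i$ be the unique cycle of $F$ through $C_i$, split $Z_i$ into the path $Z_i\cap C_i$ and its complement, close the interior of that path with the edge $e_{C_i}$ to obtain a cycle in $\cl{C_i}$, and collect it together with the cycles and isolated vertices of $F$ lying in the interior of $C_i$ to form $F_{C_i}\in\ec(\cl{C_i},e_{C_i})$. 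For each $j$, since $F$ has no cycle through $D_j$, no cycle of $F$ uses an end edge of $D_j$ (a cycle entering the interior of $D_j$ must leave through the other end edge, crossing every internal cut-edge, and would therefore be a cycle through $D_j$), so $F_{D_j}:=F\cap(\cl{D_j}-e_{D_j})$ lies in $\bec(\cl{D_j},e_{D_j})$. Finally let $F'$ be obtained from $F$ by deleting the interiors of all chains and contracting each path through $C_i$ back to the edge $e_{C_i}$; then $F'\in\mcf'$. The inverse map glues a triple back together (in the cycle of $F'$ through $e_{C_i}$, replace $e_{C_i}$ by the interior path of the cycle of $F_{C_i}$ through $e_{C_i}$ together with the two end edges of $C_i$, then adjoin the rest of $F_{C_i}$ and all of $F_{D_j}$), and internal disjointness makes the operations at distinct chains commute, so this is a genuine bijection. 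Counting cycles and isolated vertices, splitting $Z_i$ and adjoining $F_{C_i}$ changes the excess by $2(c(F_{C_i})-1)+i(F_{C_i})=\exc(F_{C_i})-2$, while adjoining $F_{D_j}$ changes it by $2c(F_{D_j})+i(F_{D_j})=\exc(F_{D_j})$; hence $\exc(F)=\exc(F')+\sum_{i=1}^k(\exc(F_{C_i})-2)+\sum_{j=1}^\ell\exc(F_{D_j})$.

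Taking the minimum over $F\in\mcf$, and using that the factors vary independently, yields
\[
\min_{F\in\mcf}\exc(F)=\min_{F'\in\mcf'}\exc(F')+\sum_{i=1}^k\minexc(\cl{C_i},e_{C_i})+\sum_{j=1}^\ell\bminexc(\cl{D_j},e_{D_j}).
\]
Subtracting $\tfrac{n(G)+n_2(G)}{4}$ and invoking the additivity $n(G)=n(G')+\sum_i n(\cl{C_i})+\sum_j n(\cl{D_j})$ and $n_2(G)=n_2(G')+\sum_i n_2(\cl{C_i})+\sum_j n_2(\cl{D_j})$ --- which hold because suppressing a chain deletes exactly its interior vertices and preserves the degree of every vertex of $G'$ and of every interior vertex --- turns the three summands into $\del(\cl{C_i},e_{C_i})$ and $\bdel(\cl{D_j},e_{D_j})$, giving the asserted identity. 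The step I expect to need the most care is the locality claim underpinning the simultaneous splitting: one must verify that suppressing one chain neither destroys another as a subcubic chain of $G$ nor alters its set of cut-edges, and that the gluing respects all chains at once. Both follow from pairwise internal disjointness, but they are exactly the point at which the single-chain argument of Proposition~\ref{prop:contractchain} needs to be extended.
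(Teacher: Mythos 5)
Your proposal is correct and follows essentially the same route as the paper's own argument: a bijective correspondence between $F\in\mcf$ and tuples $(F',F_{C_1},\dots,F_{C_k},F_{D_1},\dots,F_{D_\ell})$ with $F_{C_i}\in\ec(\cl{C_i},e_{C_i})$ and $F_{D_j}\in\bec(\cl{D_j},e_{D_j})$, the excess identity $\exc(F)=\exc(F')+\sum_i(\exc(F_{C_i})-2)+\sum_j\exc(F_{D_j})$, independent minimization over the factors, and the additivity of $n$ and $n_2$ under suppression. Your extra care in justifying the simultaneous splitting via pairwise internal disjointness, and in checking that no cycle of $F$ can use an end edge of a $D_j$ without being a cycle through $D_j$, fills in details the paper leaves implicit but does not change the argument.
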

	\begin{proof}
	    Note that $n(G) = n(G')+\sum_{i=1}^k n(\cl{C_i}) + \sum_{j=1}^\ell n(D_j)$ and $n_2(G) = n_2(G')+\sum_{i=1}^k n_2(\cl{C_i}) + \sum_{j=1}^\ell n_2(D_j)$.
	    There is a one-to-one correspondence between even covers $F \in \mcf$ and tuples $(F',F_1,\dots,F_k,H_1,\dots,H_\ell)$ where $F'\in \mcf'$, $F_i$ is an even cover of $\cl{C_i}$ containing $e_{C_i}$ for $i\in[k]$, and $H_j$ is an even cover of $\cl D_j$ containing $e_{D_j}$ for $j\in[\ell]$.
	    Note that $\exc(F) = \exc(F') + \sum_{i=1}^k (\exc(F_i)-2) + \sum_{j=1}^\ell \exc(H_j)$.
	    Therefore,
	    \begin{align*}
	        \min_{F\in\mcf} \exc(F)
	        &= \min_{F \in \mcf'}\exc(F') 
	        + \sum_{i=1}^k \min_{\substack{F_i\in \ec(\cl{C_i}) \\ e_{C_i}\in F_i}} (\exc(F_i)-2)
	        + \sum_{j=1}^\ell \min_{\substack{H_j\in \ec(\cl D_j)\\e_{D_j}\in H_j}} \exc(H_j) \\
	        &= \min_{F \in \mcf'}\exc(F') 
	        + \sum_{i=1}^k\left( \frac{n(C_i)+n_2(C_i)}{4} + \del(\cl{C_i},e_{C_i})\right)
	        + \sum_{j=1}^\ell \left(\frac{n(D_j)+n_2(D_j)}{4} + \bdel(\cl D_j,e_{D_j})\right),
	    \end{align*}
	    whence the Proposition follows.
	\end{proof}
	\fi

    We will show in Theorem \ref{thm:main} that $\del(G,e)+\bdel(G,e)\leq 0$ for every 2-connected subcubic graph $G$ and every edge $e \in E(G)$ for which $G-e$ is simple.
    If $\del(G,e)+\bdel(G,e)=0$, then we say that $(G,e)$ is \emph{tight}.
    A subcubic chain $C$ is {\it tight} if its closure $(\cl C, e_C)$ is tight.

    The next proposition states that a subcubic chain is tight if and only if all of its chain-blocks are tight. 
	
	\begin{prop}\label{prop:tight}
	Let $C=xe_0B_1e_1B_2\dots B_ke_ky$ be a subcubic chain, and assume $\del(\cl{B_i},\cl{e_i})+\bdel(\cl{B_i},\cl{e_i})\leq 0$ for all $i$. Then 
	$\del(\cl C,e_C)+\bdel(\cl C,e_C) \le 0$, with equality if and only if $\del(\cl{B_i},\cl{e_i})+\bdel(\cl{B_i},\cl{e_i}) = 0$ for all $i\in [k]$.
	\end{prop}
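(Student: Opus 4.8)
The plan is to combine Proposition~\ref{prop:subcubicchains} with a direct computation of how $n$ and $n_2$ change when we pass from the chain-blocks $\cl{B_i}$ to the closure $\cl C$. By Proposition~\ref{prop:subcubicchains} we have $\del(\cl C, e_C) = \sum_{i=1}^k \del(\cl{B_i}, \cl{e_i})$ and $\bdel(\cl C, e_C) = \sum_{i=1}^k \bdel(\cl{B_i}, \cl{e_i})$. Adding these two identities gives
\[
\del(\cl C, e_C) + \bdel(\cl C, e_C) = \sum_{i=1}^k \bigl( \del(\cl{B_i}, \cl{e_i}) + \bdel(\cl{B_i}, \cl{e_i}) \bigr).
\]
So the statement is immediate from the hypothesis: each summand is $\le 0$, so the sum is $\le 0$, and the sum is $0$ if and only if every summand is $0$. (If $C$ is trivial, $k=0$ and everything is $0$ by definition, so the claim holds vacuously; if $k \ge 1$ the above applies verbatim.)

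The only thing to be careful about is that Proposition~\ref{prop:subcubicchains} is stated for subcubic chains in general (its proof covers the trivial case separately), so I can cite it directly without re-deriving the decomposition. First I would note that the $k=0$ case is trivial. Then I would write down the two displayed identities from Proposition~\ref{prop:subcubicchains}, add them termwise, and invoke the hypothesis $\del(\cl{B_i},\cl{e_i}) + \bdel(\cl{B_i},\cl{e_i}) \le 0$ to conclude both the inequality and the equality characterization.

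There is essentially no obstacle here — this proposition is a bookkeeping corollary of Proposition~\ref{prop:subcubicchains}. The one point worth a sentence is why the sum-of-nonpositive-terms-equals-zero forces each term to vanish, which is elementary. So the proof is just: apply Proposition~\ref{prop:subcubicchains}, add, and read off the conclusion.

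\begin{proof}
If $C$ is trivial (i.e.\ $k=0$), then $\del(\cl C,e_C)=\bdel(\cl C,e_C)=0$ by definition, and the claim holds (vacuously, as there is no $i\in[k]$). So we may assume $C$ is nontrivial. By Proposition~\ref{prop:subcubicchains},
\[
\del(\cl C,e_C) = \sum_{i=1}^k \del(\cl{B_i},\cl{e_i}) \quad\text{and}\quad \bdel(\cl C,e_C) = \sum_{i=1}^k \bdel(\cl{B_i},\cl{e_i}).
\]
Adding these two identities yields
\[
\del(\cl C,e_C) + \bdel(\cl C,e_C) = \sum_{i=1}^k \bigl( \del(\cl{B_i},\cl{e_i}) + \bdel(\cl{B_i},\cl{e_i}) \bigr).
\]
By hypothesis each term $\del(\cl{B_i},\cl{e_i}) + \bdel(\cl{B_i},\cl{e_i})$ is at most $0$, so the sum is at most $0$, giving $\del(\cl C,e_C) + \bdel(\cl C,e_C) \le 0$. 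Moreover, since a sum of nonpositive terms equals $0$ if and only if every term equals $0$, we have $\del(\cl C,e_C) + \bdel(\cl C,e_C) = 0$ if and only if $\del(\cl{B_i},\cl{e_i}) + \bdel(\cl{B_i},\cl{e_i}) = 0$ for all $i\in[k]$.
\end{proof}
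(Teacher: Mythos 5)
Your proof is correct and follows essentially the same route as the paper's: both rest entirely on the two sum identities of Proposition~\ref{prop:subcubicchains} and the observation that a sum of nonpositive terms is nonpositive, vanishing only when every term does. The paper phrases the inequality as $\del(\cl C,e_C)=\sum_i\del(\cl{B_i},\cl{e_i})\le\sum_i(-\bdel(\cl{B_i},\cl{e_i}))=-\bdel(\cl C,e_C)$ rather than adding the two identities, but this is only a cosmetic difference.
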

	\begin{proof}
	Since $\del(\cl{B_i},\cl{e_i})+\bdel(\cl{B_i},\cl{e_i})\leq 0$ for all $i$, we have by Proposition \ref{prop:subcubicchains},
	\begin{align*}
		\del(\cl C, e_C) &= \sum_{j=1}^k \del(\cl{B_i}, \cl{e_i})  
		\leq \sum_{j=1}^k (-\bdel(\cl{B_i},\cl{e_i}))
		= -\bdel(\cl C,e_C).
	\end{align*}
	Hence, $\del(\cl C,e_C)+\bdel(\cl C,e_C) \leq 0$, with equality if and only if	  
	$\del(\cl{B_i},\cl{e_i})+\bdel(\cl{B_i},\cl{e_i}) = 0$ for all $i$.
	\end{proof}

    We say that a subcubic chain $C$ is \textit{minimal} if it is tight and   $\del(\cl C,e_C)=-\frac{1}{2}$, and that $C$ is \textit{near-minimal} if it is tight and $\del(\cl C,e_C)\in\{-\frac{1}{2},-1\}$.
	Two subcubic chains $C_1$ and $C_2$ are \textit{balanced} if $\del(\cl{C_1},e_{C_1})=\del(\cl{C_2},e_{C_2})$.
	
	A \emph{$\theta$-chain} is a graph $G$ that is the union of three internally disjoint subcubic chains $C_1,C_2,C_3$ with common endpoints. 
	We call $C_1,C_2,C_3$ the \textit{chains of $G$}.
	Note that the choices of the three chains $C_1,C_2,C_3$ may not be unique (consider the graph obtained from two disjoint 4-cycles by adding two edges joining them so that the endpoints of the two edges are nonadjacent in each 4-cycle).
	A \textit{rooted $\theta$-chain} is a pair $(G,e)$ where $G$ is a graph and $e=uv\in E(G)$ such that $G-e$ is the union of two internally disjoint subcubic chains $C_1,C_2$ with common endpoints $\{u,v\}$.
	We call $C_1,C_2$ the \textit{chains of $(G,e)$}. See Figure \ref{fig:theta_chain}.
	
	A (rooted) $\theta$-chain is \textit{balanced} if all pairs of its chains are balanced, \textit{tight} if the closures of its chains are all tight, and \textit{(near) minimal} if all of its chains are (near) minimal.
	Note that a (near) minimal (rooted) $\theta$-chain is also balanced and tight by definition. See Figure \ref{fig:minimal_chain}.
\begin{figure}
\begin{minipage}{.5\textwidth}
  \centering	    \includegraphics[scale=0.35]{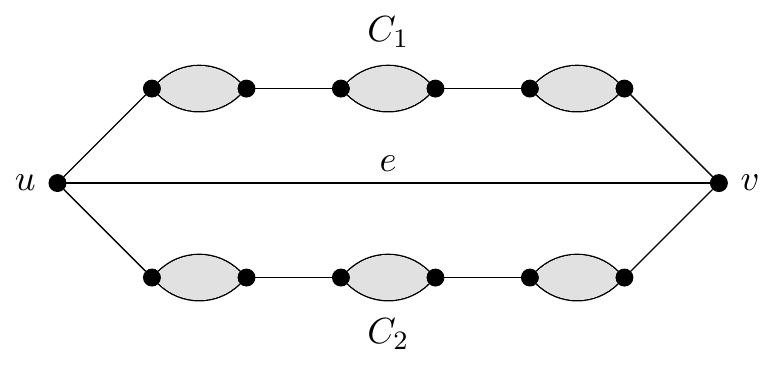}
  \caption{A rooted $\theta$-chain}
  \label{fig:theta_chain}
\end{minipage}%
\begin{minipage}{.5\textwidth}
    \centering
	
\begin{tikzpicture}
	\node[style=vertex] (u) at (0,0){};
	\node[style=vertex] (a) at (2,0){};
	\node[style=vertex] (v) at (4,0){};
	\node[style=vertex] (b1) at (1,1){};
	\node[style=vertex] (b2) at (2,1.4){};
	\node[style=vertex] (b3) at (2,0.6){};
	\node[style=vertex] (b4) at (3,1){};
	\node[style=vertex] (c1) at (1,-1){};
	\node[style=vertex] (c2) at (2,-0.6){};
	\node[style=vertex] (c3) at (3,-1){};
	\node[style=vertex] (c4) at (1.5,-1.5){};
	\node[style=vertex] (c5) at (2,-1.2){};
	\node[style=vertex] (c6) at (2.5,-1.5){};
	\node[style=vertex] (c7) at (2,-1.8){};

	\draw[style=edge] (u) to (a) to (v);
	\draw[style=edge] (u) to (b1) to (b2) to (b4) to (b3) to (b1);
	\draw[style=edge] (b4) to (v);
	\draw[style=edge] (u) to (c1) to (c2) to (c3) to (v);
	\draw[style=edge] (c1) to (c4) to (c5) to (c6) to (c7) to (c4); 
	\draw[style=edge] (c6) to (c3);
\end{tikzpicture}
	\caption{A mnimal $\theta$-chain}
	\label{fig:minimal_chain}
\end{minipage}
\end{figure}
	
	We can now state our main result, which immediately implies \eqref{eqn:ecexc}. 
	For inductive purposes, we allow the graph $G$ to be a loop $e$ on a single vertex and we also allow one edge of $G-e$ to be parallel to $e$. In all cases however, $G-e$ is a simple subcubic graph. 
     
    \begin{restatable}{theorem}{maintheorem} \label{thm:main}
		Let $G$ be a 2-connected subcubic graph and let $e=uv$ be an edge of $G$ such that $G-e$ is simple.
		Then the following statements hold:
		\begin{enumerate}[label=\textbf{\textup{(T\arabic*)}}]
			\item $\del(G,e) \leq -\frac{1}{2}$, with equality if and only if either $G$ is a loop or $(G,e)$ is a balanced tight rooted $\theta$-chain. \label{main1/2} 
			\item If $G-e$ is 2-connected, then $\bdel(G,e) \leq \frac{3}{2}$, with equality if and only if $G-e$ is a minimal $\theta$-chain.  \label{main3/2}
			\item If $\del(G,e) = -1$, then either
			\begin{enumerate}
			    \item $G\cong K_4$, or
			    \item $e$ has a parallel edge, and suppressing $\{u,v\}$ to an edge $e'$ results in a graph $G'$ such that either $G'$ is a loop or $(G',e')$ is a near-minimal rooted $\theta$-chain, or 
			    \item there exists $e'\in E(G)$ such that $\{e,e'\}$ is a 2-edge-cut in $G$, and suppressing either subcubic chain $C$ of $G$ with end edges $e,e'$ yields either a loop or a balanced tight rooted $\theta$-chain $(G/C,e_{G/C})$, or
			    \item $(G,e)$ is a rooted $\theta$-chain such that $\min_{i\in[2]}\left(\del(\cl{C_i},e_{C_i})+\bdel(\cl{C_{3-i}},e_{C_{3-i}})\right) = -\frac{1}{2}$.
			\end{enumerate} \label{main1}
			\item $\del(G,e)+\bdel(G,e) \leq 0$. \label{maindelbdel}
		\end{enumerate}
    \end{restatable}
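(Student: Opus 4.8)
The plan is to establish all four statements of Theorem~\ref{thm:main} simultaneously by strong induction on $n(G)+e(G)$ (breaking ties, if necessary, by the number of degree-$3$ vertices). The point of carrying (T4) along is that it is the essential complement to (T1): since $\exc(G)-\tfrac{n(G)+n_2(G)}{4}=\min\{\del(G,e)+2,\bdel(G,e)\}$ and $\del(G,e)$ is half-integral for $2$-connected subcubic $G$, the bounds $\del(G,e)\le-\tfrac12$ (from (T1)) and $\del(G,e)+\bdel(G,e)\le 0$ (from (T4)) together force $\exc(G)\le\tfrac{n(G)+n_2(G)}{4}+1$, which is \eqref{eqn:ecexc}. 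It also helps to observe that the even covers of $G$ avoiding $e$ are exactly the even covers of $G-e$, so $\bminexc(G,e)=\exc(G-e)$, and when $G-e$ is $2$-connected this gives $\bdel(G,e)=\exc(G-e)-\tfrac{n(G-e)+n_2(G-e)}{4}+\tfrac12$; thus (T2) is just \eqref{eqn:ecexc} applied to $G-e$ and shifted by $\tfrac12$, and (T4) has a similar flavour. I would first dispose of the base cases — $G$ a loop (where all four statements hold by the conventions fixed just before Proposition~\ref{prop:subcubicchains}), together with a short list of small $2$-connected subcubic graphs such as $K_4$ and $K_{2,3}$ — by direct computation of $\minexc$ and $\bminexc$.

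For the inductive step I would branch on connectivity. Being $2$-connected, $G$ has no cut vertex, but it may have a $2$-edge-cut, or $G-e$ may fail to be $2$-connected (in which case $e=uv$ must ``bridge'' a cut vertex $w$ of $G-e$, which forces a subcubic-chain structure in $G$). In each such situation I would isolate a subcubic chain $C$ of $G$, apply the inductive hypothesis to the strictly smaller pairs $(G/C,e_{G/C})$ and $(\cl C,e_C)$, and recombine the resulting even covers via Propositions~\ref{prop:subcubicchains}, \ref{prop:contractchain}, and \ref{prop:tight}. Proposition~\ref{prop:tight} is tailored exactly so that tightness — hence (T4) with equality, and the ``balanced tight rooted $\theta$-chain'' descriptions appearing in (T1) and (T3) — is preserved under suppressing and re-inserting chains; this is the reason the $\theta$-chain vocabulary was introduced.

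The core of the argument is the case where $G$ and $G-e$ are both (essentially) $3$-connected. Here I would perform a bounded-size local reduction — typically deleting an edge $f\ne e$ and suppressing the at most two resulting degree-$2$ vertices, or contracting a suitable short path or triangle — chosen so that the reduced graph $G'$ is again $2$-connected subcubic with $G'-e$ simple (or is a loop or a base case), so that the quantity $n+n_2$ decreases by a controlled amount upon passing to $G'$, and so that a minimum even cover of $G'$ lifts to an even cover of $G$ with a predictable change of excess. Matching the threshold values $-\tfrac12$, $\tfrac32$, $-1$, $0$ exactly under such a reduction, and verifying that equality forces the stated extremal configuration rather than a near-miss, is where the real work is. I expect this step — showing that a suitable reducible edge $f$ always exists, and handling the degenerate outputs (loops, parallel edges, $K_4$, short chains) — to be the main obstacle, presumably requiring a minimal-counterexample analysis of the local structure of $3$-connected subcubic graphs, possibly with a discharging flavour.

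Finally, (T4) is read off in each case from (T1)--(T3). By (T1) we have $\del(G,e)\le-\tfrac12$, so it suffices to show $\bdel(G,e)\le-\del(G,e)$. If $\del(G,e)\le-\tfrac32$, then (T2) gives $\bdel(G,e)\le\tfrac32\le-\del(G,e)$, after first reducing to the case $G-e$ $2$-connected by peeling off subcubic chains as above and invoking the inductive form of (T4) together with Propositions~\ref{prop:subcubicchains}--\ref{prop:tight} on the pieces. If $\del(G,e)=-1$, each of the four sub-cases of (T3) admits a direct bound $\bdel(G,e)\le 1$. And if $\del(G,e)=-\tfrac12$, the equality clause of (T1) pins $G$ down as a loop or a balanced tight rooted $\theta$-chain, in which cases $\bdel(G,e)\le\tfrac12$ follows by applying the already-proved statements to the constituent chains through Proposition~\ref{prop:subcubicchains}. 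The delicate point throughout is that (T2) assumes $G-e$ is $2$-connected, which need not hold, so the ``large $\bdel$'' regime of (T4) must rely on the chain propositions and the inductive hypothesis rather than on (T2) directly; keeping the equality characterizations of all four statements mutually consistent through every reduction is the bookkeeping the proof must get right.
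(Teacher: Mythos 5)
Your framework is sound and matches the paper's in its outer shell: induction with all four statements carried together, (T4) deduced from the equality characterizations of (T1)--(T2) via the $\theta$-chain lemmas, the reduction of (T2) to the smaller graph obtained by deleting $e$ (the paper suppresses the endpoint $u$ to an edge $f_u$ and uses $\bdel(G,e)=\min\{\del(G_u,f_u)+2,\bdel(G_u,f_u)+1\}$, which also delivers the equality clause cleanly), and the peeling of subcubic chains via Propositions~\ref{prop:subcubicchains}--\ref{prop:tight} when $e$ lies in a $2$-edge-cut, when $e$ has a parallel edge, or when $G-\{u,v\}$ is disconnected. Your caveat about not invoking (T2) when $G-e$ fails to be $2$-connected is also the right bookkeeping.

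The genuine gap is the core case, which you flag as ``the main obstacle'' but for which your proposed mechanism --- deleting an edge $f\ne e$, suppressing the resulting degree-$2$ vertices, and tracking thresholds with a discharging-style analysis --- is not what makes the proof go through, and I see no way to make it do so: a single local reduction loses control of both the half-integral thresholds and, more fatally, the global extremal structure (balanced tight rooted $\theta$-chains), which cannot be read off from a bounded-radius modification. What the paper actually does is global: it takes $S$ to be the set of cut-edges of $G-\{u,v\}$, identifies (Claim~\ref{clm:Z_1Z_2}) the unique blocks $Z_1,Z_2$ of $G-\{u,v\}-S$ reachable by three internally disjoint paths from the neighbours of $u$ and $v$, and decomposes $G$ into $Z_1,Z_2$ and four subcubic chains $U_1,U_2,V_1,V_2$ (plus a connector $Y$). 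The engine is then an \emph{averaging of routing inequalities}: each choice of which chains the cycle through $e$ traverses yields an upper bound on $\del(G,e)$ in which the untraversed chains contribute $\bdel$ terms; summing two (Case $Z_1\ne Z_2$) or four (Case $Z_1=Z_2$) such bounds lets the inductive (T4) cancel every $\del+\bdel$ pair, leaving only the $\del(Z+u_i'v_j',u_i'v_j')$ terms. These are then killed by Lemma~\ref{lem:block3vx} --- for a $2$-connected $Z$ with three degree-$2$ vertices $u,v_1,v_2$, one has $\del(Z+uv_1,uv_1)+\del(Z+uv_2,uv_2)\le -2$ --- whose proof is itself a routing argument inside a balanced tight rooted $\theta$-chain, followed by a delicate endgame ruling out $u_i'v_j'\in E(Z)$ via $2$-edge-cuts. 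None of this machinery (the block decomposition of $G-\{u,v\}$, the multi-routing cancellation against (T4), or the three-vertex lemma) is present in your proposal, and it is precisely the content of the theorem; without it the induction does not close.
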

	One immediate consequence of Theorem \ref{thm:main} is that if $C$ is a subcubic chain, then $\del(\cl{C},e_C)\leq -\frac{1}{2}$ unless $C$ is trivial, in which case $\del(\cl{C},e_C)=0$ by definition. In particular, $\del(G,e)\leq -\frac{1}{2}$ for every nonempty 2-connected subcubic graph $G$ and $e\in E(G)$ such that $G-e$ is simple.
	Hence, if $C$ is a minimal subcubic chain, then by Proposition \ref{prop:subcubicchains}, it has exactly one chain-block $(\cl{B},\cl{e_B})$, and this chain-block satisfies $\del(\cl{B},\cl{e_B})=-\frac{1}{2}$.

\section{Properties of $\theta$-chains}
	\label{sec:thetachains}
	
	In this section, we derive useful properties of balanced, tight, or minimal $\theta$-chains assuming Theorem \ref{thm:main} for smaller graphs. 
	\iflong\else\footnote{The full version of this paper with complete proofs of the technical lemmas in this section, as well as parts of the proof Section \ref{sec:mainproof} can be found at \url{https://people.math.gatech.edu/~mwigal3/tsp.pdf}.}\fi \ 
	We begin by proving statements \ref{main1/2} and \ref{main1} of Theorem \ref{thm:main}, assuming Theorem \ref{thm:main} for smaller graphs, for the special case where $(G,e)$ is a rooted $\theta$-chain (equivalently, $G$ is simple and $\{u,v\}$ forms a cut in $G$). 
	The proof is a relatively straightforward but illustrative demonstration of our techniques.

	\begin{lemma} \label{lem:baltighttheta}
	    Let $(G,e)$ be a simple rooted $\theta$-chain, and let $C_1,C_2$ denote the two chains of $(G,e)$. Assume that Theorem \ref{thm:main} holds for graphs with fewer than $n(G)$ vertices.
	    Then
\begin{enumerate}
    \item [(i)] $\del(G,e) = -\frac{1}{2} + \min_{i \in [2]}\left(\del(\cl{C_i},e_{C_i}) + \bdel(\cl{C_{3-i}},e_{C_{3-i}})\right) \leq -\frac{1}{2}$, with equality if and only if $(G,e)$ is a balanced tight rooted $\theta$-chain,
    \item [(ii)]  $\bdel(G,e) \leq \frac{3}{2}  +\del(\cl{C_1},e_{C_1}) + \del(\cl{C_2},e_{C_2})\leq \frac{1}{2}$,
    \item [(iii)] $(\del(G,e),\bdel(G,e))=(-\frac{1}{2},\frac{1}{2})$ if and only if $(G,e)$ is a minimal rooted $\theta$-chain, and 
    \item [(iv)] if $\del(G,e)=-1$ then $\min_{i\in[2]}\left(\del(\cl{C_i},e_{C_i})+\bdel(\cl{C_{3-i}},e_{C_{3-i}})\right) = -\frac{1}{2}$.
\end{enumerate}
	 
	\end{lemma}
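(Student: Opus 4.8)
The plan is to compute $\del(G,e)$ and $\bdel(G,e)$ directly by analyzing how an even cover of $G$ interacts with the two chains $C_1,C_2$ and the edge $e=uv$, using the chain-block decomposition (Proposition~\ref{prop:subcubicchains}) together with the inductive hypothesis only to bound the $\del$/$\bdel$ of the closures $(\cl{C_i},e_{C_i})$, which are strictly smaller graphs (or trivial, in which case the quantities vanish by definition). First I would set up the key dichotomy for an even cover $F\in\ec(G,e)$: the cycle $D$ containing $e$ either (a) uses $e$ together with a path through exactly one of $C_1,C_2$ — say it goes $u \to v$ along $e$ and $v\to u$ along a cycle-through-$C_i$ — in which case the other chain $C_{3-i}$ must be covered by an even cover of $\cl{C_{3-i}}$ \emph{avoiding} $e_{C_{3-i}}$, contributing $\bminexc(\cl{C_{3-i}},e_{C_{3-i}})$ and the edge $e$ plus the path contribute $\minexc(\cl{C_i},e_{C_i})$ via the chain-splitting correspondence; or (b) the cycle through $e$ also runs through both chains, which forces the whole graph minus $u,v$ to be traversed as one cycle and is easily seen to be no better. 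Translating the excess bookkeeping exactly as in the proof of Proposition~\ref{prop:subcubicchains} (each chain-block contributes its $\minexc$ or $\bminexc$, with a single global ``$+2$'' for the unique cycle and appropriate ``$-2$'' corrections), and then subtracting $\tfrac{n(G)+n_2(G)}{4} = \sum_i \tfrac{n(\cl{C_i})+n_2(\cl{C_i})}{4}$, yields
\begin{align*}
    \del(G,e) = \min_{i\in[2]}\bigl(\del(\cl{C_i},e_{C_i}) + \bdel(\cl{C_{3-i}},e_{C_{3-i}})\bigr),
\end{align*}
wait — I must double-check the additive constant: the ``$-\tfrac12$'' must drop out of the half-integrality of the chain-block quantities and the combinatorial constant from splitting the $e$-cycle; I expect it to come from the fact that $\minexc$ already absorbed a ``$-2$'' while $\bminexc$ did not, so the correct identity is the one displayed in part~(i), namely $\del(G,e) = -\tfrac12 + \min_{i\in[2]}(\del(\cl{C_i},e_{C_i}) + \bdel(\cl{C_{3-i}},e_{C_{3-i}}))$. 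Getting this constant exactly right is the first thing I would nail down carefully.

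For part~(i), once the identity is established, I apply the inductive statement \ref{maindelbdel} to $(\cl{C_i},e_{C_i})$: if $C_i$ is nontrivial then $\del(\cl{C_i},e_{C_i}) + \bdel(\cl{C_i},e_{C_i}) \le 0$, and if $C_i$ is trivial both terms are $0$; so $\del(\cl{C_i},e_{C_i}) \le -\bdel(\cl{C_i},e_{C_i})$, giving
\begin{align*}
    \del(\cl{C_i},e_{C_i}) + \bdel(\cl{C_{3-i}},e_{C_{3-i}}) \le \bdel(\cl{C_{3-i}},e_{C_{3-i}}) - \bdel(\cl{C_i},e_{C_i}),
\end{align*}
and summing the two choices of $i$ shows the minimum is $\le 0$, hence $\del(G,e)\le -\tfrac12$. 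Equality forces both inequalities tight, i.e. $\del(\cl{C_i},e_{C_i}) + \bdel(\cl{C_i},e_{C_i}) = 0$ for $i=1,2$ (so $(G,e)$ is tight) and $\bdel(\cl{C_1},e_{C_1}) = \bdel(\cl{C_2},e_{C_2})$, which combined with tightness gives $\del(\cl{C_1},e_{C_1}) = \del(\cl{C_2},e_{C_2})$, i.e. balanced. Conversely, balanced and tight makes the displayed min equal $0$. Parts~(iv) is then immediate: $\del(G,e)=-1$ forces $\min_{i\in[2]}(\del(\cl{C_i},e_{C_i})+\bdel(\cl{C_{3-i}},e_{C_{3-i}})) = -\tfrac12$ by the identity in~(i).

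For part~(ii), I run the analogous dichotomy for $F\in\bec(G,e)$: since $e$ is absent, the two chains are covered independently by even covers of $\cl{C_1}$ and $\cl{C_2}$, but the edge-constraint is subtler — a cycle through $C_i$ in $G-e$ must pair up with a cycle through $C_{3-i}$ (they must close up through the common endpoints $u,v$), so the ``both chains used by one cycle'' configuration is the generic one and I'd expect $\bminexc(G,e)$ to be governed by $\min(\cdots)$ over whether each chain contributes via $\ec$ or $\bec$; invoking \ref{main1/2} inductively to bound $\del(\cl{C_i},e_{C_i})\le -\tfrac12$ (for nontrivial $C_i$) then yields $\bdel(G,e)\le \tfrac32 + \del(\cl{C_1},e_{C_1}) + \del(\cl{C_2},e_{C_2}) \le \tfrac32 - \tfrac12 - \tfrac12 = \tfrac12$, where I must also handle the trivial-chain cases separately (if some $C_i$ is trivial then $G-e$ has a vertex of degree~$2$ that is $u$ or $v$... actually $u,v$ have degree $2$ in $G-e$ regardless, and if $C_i$ is an edge then $G-e$ is a single cycle, an easy base case). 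Part~(iii) then follows by combining~(i) and~(ii): $(\del,\bdel)=(-\tfrac12,\tfrac12)$ forces equality throughout~(ii), so $\del(\cl{C_i},e_{C_i}) = -\tfrac12$ for both $i$, and together with tightness from~(i) this says both chains are minimal, i.e. $(G,e)$ is a minimal rooted $\theta$-chain; conversely minimality gives $\del(\cl{C_i},e_{C_i})=-\tfrac12$, $\bdel(\cl{C_i},e_{C_i})=\tfrac12$, and plugging into the two identities gives exactly $(-\tfrac12,\tfrac12)$.

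\textbf{Main obstacle.} The delicate point is the exact bookkeeping of additive constants in the two even-cover/odd-cover identities — in particular, correctly accounting for the unique cycle through $e$ (the ``$+2$''), the per-chain-block ``$-2$'' corrections hidden in $\minexc$ versus the absence of them in $\bminexc$, and the case analysis over which chain the $e$-cycle threads through — and making sure the trivial-chain degenerate cases (including the loop and the parallel-edge allowances in the statement of Theorem~\ref{thm:main}) are not inadvertently excluded. The structural/equality analysis afterwards is routine once these identities are pinned down.
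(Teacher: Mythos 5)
Your proposal is correct and follows essentially the same route as the paper: decompose even covers in $\ec(G,e)$ according to which chain the cycle through $e$ traverses to get the exact identity $\del(G,e)=-\frac12+\min_{i}(\del(\cl{C_i},e_{C_i})+\bdel(\cl{C_{3-i}},e_{C_{3-i}}))$, build an even cover in $\bec(G,e)$ by merging the two $e_{C_i}$-cycles to get the bound in (ii), and close everything with the inductive statements \ref{main1/2} and \ref{maindelbdel} applied to the chain closures. The only loose ends are cosmetic: the configuration in which the $e$-cycle threads both chains is in fact impossible (a cycle uses only two edges at $u$, one of which is $e$), and the trivial-chain worry in (ii) is vacuous because a trivial chain would be an edge parallel to $e$, contradicting the simplicity hypothesis --- which is precisely how the paper secures $\del(\cl{C_i},e_{C_i})\le-\frac12$ for both chains.
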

	
	\iflong
	
	\begin{proof}
    An even cover $F\in \ec(G,e)$ corresponds to a pair $(F_1,F_2)$ where $F_i \in \ec(\cl{C_i})$ for each $i\in[2]$ and $F_i \in \ec(\cl{C_i},e_{C_i})$ for exactly one $i \in[2]$.
    This correspondence gives $\exc(F) = \exc(F_1)+\exc(F_2)$. Since $n(G) = n(\cl{C_1})+n(\cl{C_2})+2$ and $n_2(G)=n_2(\cl{C_1})+n_2(\cl{C_2})$, we have
    \begin{align*}
        \minexc(G,e)
        &= \min_{\substack{F \in \ec(G,e)}} \exc(F) - 2 \\
        &= \min_{i\in[2]} \left(\min_{\substack{F_i\in\ec(\cl{C_i},e_{C_i})}} (\exc(F_i) - 2) + \min_{\substack{F_{3-i}\in\bec(\cl{C_{3-i}},e_{C_{3-i}})}} \exc(F_{3-i})\right) \\
        &= \min_{i\in[2]}\left(\minexc(\cl{C_i},e_{C_i})+\bminexc(\cl{C_{3-i}},e_{C_{3-i}})\right) \\
        &= \min_{i\in[2]}\left(\frac{n(\cl{C_i})+n_2(\cl{C_i})}{4}+\del(\cl{C_i},e_{C_i}) + \frac{n(\cl{C_{3-i}})+n_2(\cl{C_{3-i}})}{4}+\bdel(\cl{C_{3-i}},e_{C_{3-i}})\right) \\
        &= \min_{i\in[2]}\left(\frac{n(G)+n_2(G)}{4} - \frac{1}{2} + \del(\cl{C_i},e_{C_i})+\del(\cl{C_{3-i}},e_{C_{3-i}})\right).
    \end{align*}
    Therefore,
    \begin{align}
        \del(G,e)
        &=-\frac{1}{2}+ \min_{i\in[2]}\left( \del(\cl{C_i},e_{C_i})+\bdel(\cl{C_{3-i}},e_{C_{3-i}})\right), \label{eqn:uvcutdel=minchains}
    \end{align}
    whence for $i\in[2]$,
        
        \begin{align}
            \del(G,e)\leq -\frac{1}{2}+ \del(\cl{C_i},e_{C_i})+\bdel(\cl{C_{3-i}},e_{C_{3-i}}).  \label{eqn:uvcutdelGeCi}
    \end{align}
    By assumption, Theorem \ref{thm:main} holds for $(\cl{C_i},e_{C_i})$; so $\del(\cl{C_i},e_{C_i})+\bdel(\cl{C_i},e_{C_i}) \leq 0$ for each $i\in[2]$. Adding the two inequalities of \eqref{eqn:uvcutdelGeCi} gives
    \begin{align*}
        2\del(G,e)
        &\leq -1 + \sum_{i\in[2]} \left(\del(\cl{C_i},e_{C_i})+\bdel(\cl{C_i},e_{C_i})\right) \leq -1.
    \end{align*}
    Hence, 
    \begin{equation}
        \del(G,e)\leq -\frac{1}{2}. \label{eqn:uvcutdel1/2}
    \end{equation}
    Moreover, $\del(G,e) = -\frac{1}{2}$ if and only if all of the above inequalities are tight, which means $(\cl{C_1},e_{C_1})$ and $(\cl{C_2},e_{C_2})$ are tight, and $$0 = \del(\cl{C_1},e_{C_1})+\bdel(\cl{C_2},e_{C_2}) = \del(\cl{C_1},e_{C_1})-\del(\cl{C_2},e_{C_2}).$$
    In other words, $C_1,C_2$ are balanced.
    Together with \eqref{eqn:uvcutdel=minchains} and \eqref{eqn:uvcutdel1/2}, this proves (i).
    
    If $F_i\in \ec(\cl{C_i}, e_{C_i})$ for each $i\in[2]$ then, by merging the cycles in $F_i$ containing $e_{C_i}$ for $i\in [2]$, we obtain an even cover $F\in \bec(G,e)$  with $\exc(F) = \exc(F_1)+\exc(F_2)-2$. So
    \begin{align*}
        \bminexc(G,e)
        &\leq \min_{\substack{F \in \bec(G,e)}} \exc(F) \\
        &\leq \min_{\substack{F_1\in\ec(\cl{C_1},e_{C_1})}} \exc(F_1) + \min_{\substack{F_2\in\ec(\cl{C_2},e_{C_2})}} (\exc(F_2) - 2) \\
        &= (\minexc(\cl{C_1},e_{C_1})+2)+\minexc(\cl{C_2},e_{C_2}) \\
        &= \frac{n(\cl{C_1})+n_2(\cl{C_1})}{4} + \del(\cl{C_1},e_{C_1}) + \frac{n(\cl{C_2})+n_2(\cl{C_2})}{4} + \del(\cl{C_2},e_{C_2}) + 2\\
        &= \frac{n(G)+n_2(G)}{4} + \frac{3}{2} + \del(\cl{C_1},e_{C_1})+\del(\cl{C_2},e_{C_2}).
    \end{align*}
    Hence,
    \begin{align*}
        \bdel(G,e)
        &\leq \frac{3}{2} + \del(\cl{C_1},e_{C_1})+\del(\cl{C_2},e_{C_2}). 
    \end{align*}
    Since $G$ is simple, each $C_i$ is a nontrivial chain; so $\del(\cl{C_i},e_{C_i})\leq -\frac{1}{2}$ by the assumption that Theorem \ref{thm:main} holds for $(\cl{C_i},e_{C_i})$.
    This gives $\bdel(G,e)\leq \frac{1}{2}$ and proves (ii).
    
    To prove (iii), suppose $(\del(G,e), \bdel(G,e))=(-\frac{1}{2}, \frac{1}{2})$. Then  $\del(\cl{C_1},e_{C_1})+\del(\cl{C_2},e_{C_2}) =-1$ by (ii). Since $\del(\cl{C_i},e_{C_i})\le -\frac{1}{2}$ for $i\in [2]$ (by assumption),  $\del(\cl{C_i},e_{C_i})=-\frac{1}{2}$ for each $i\in[2]$.
    Moreover, each $(\cl{C_i},e_{C_i})$ is tight (by (i)), so $(G,e)$ is a minimal rooted $\theta$-chain. 
    
    Finally, note that (iv) follows from (i). 
	\end{proof}

	
	The next lemma says that given a choice of adding an edge $uv_1$ or $uv_2$ to a 2-connected subcubic graph $Z$, the two quantities $\del(Z+uv_1,uv_1)$ and $\del(Z+uv_2,uv_2)$ cannot both be large.
	
	\ifx
    \begin{lemma} \label{lem:thetadistinctchains}
    Let $(G,e)$ be a minimal rooted $\theta$-chain, and let $u,v$ be distinct vertices of degree 2 in $G$.
    Assume that Theorem \ref{thm:main} holds for graphs with fewer than $n(G)$ vertices.
    Then either $u,v$ belong to distinct chains of $(G,e)$, or there is a subcubic chain $C$ of $G$ such that $(\cl C,e_C)$ is a minimal rooted $\theta$-chain and $u$ and $v$ belong to distinct chains of $(\cl\mcc,e_\mcc)$.
    \end{lemma}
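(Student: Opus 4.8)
The plan is to induct on $n(G)$, so that in addition to Theorem~\ref{thm:main} we may use the present lemma for rooted $\theta$-chains with fewer vertices, and to descend through the $\theta$-chain structure until $u$ and $v$ are separated. If $u$ and $v$ already lie in distinct chains of $(G,e)$ we are done, so assume $u,v\in V(C_1)$, where $C_1,C_2$ are the chains of $(G,e)$ and $\{a,b\}$ are their common endpoints.

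Since $(G,e)$ is minimal, $C_1$ is a minimal subcubic chain; by Proposition~\ref{prop:subcubicchains} together with \ref{main1/2} applied to each of its chain-blocks (all of which are smaller than $G$), $C_1$ has exactly one chain-block $(\cl B,\cl{e_B})$, this chain-block satisfies $\del(\cl B,\cl{e_B})=-\tfrac12$, and $(\cl{C_1},e_{C_1})=(\cl B,\cl{e_B})$ is tight. A short degree count then shows that the degree-$2$ vertices of $G$ lying in $V(C_1)$ are exactly the degree-$2$ vertices of $\cl B$: the two attachment vertices of $B$ have degree $2$ in $B$ but degree $3$ in both $G$ and $\cl B$, every other vertex of $B$ keeps its degree, and $a,b$ have degree $3$ in $G$. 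Hence $u$ and $v$ are two distinct degree-$2$ vertices of $\cl B$, so $\cl B$ is not a loop, and therefore, by \ref{main1/2}, $(\cl B,\cl{e_B})$ is a balanced tight rooted $\theta$-chain with $n(\cl B)<n(G)$.

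If $(\cl B,\cl{e_B})$ is in fact minimal, I would finish as follows: if $u$ and $v$ lie in distinct chains of $(\cl B,\cl{e_B})$, then $C:=C_1$ works, since $(\cl C,e_C)=(\cl B,\cl{e_B})$; otherwise I apply the inductive hypothesis to the smaller minimal rooted $\theta$-chain $(\cl B,\cl{e_B})$ to get a subcubic chain $C'$ of $\cl B$ with $(\cl{C'},e_{C'})$ minimal and separating $u$ and $v$, and lift $C'$ to a subcubic chain $C$ of $G$ with the same closure — this is routine, replacing the edge $\cl{e_B}$, if $C'$ uses it, by the length-$3$ path of $G$ through $e$ and the two end edges of $C_1$. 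The main obstacle is the remaining case, where $(\cl B,\cl{e_B})$ is balanced and tight but \emph{not} minimal: then its chains $D_1,D_2$ satisfy $\del(\cl{D_1},e_{D_1})=\del(\cl{D_2},e_{D_2})\le -1$, so each decomposes (Proposition~\ref{prop:subcubicchains}) into at least two chain-blocks, each with $\del\le -\tfrac12$. To deal with this I would strengthen the inductive statement to allow $(G,e)$ to be \emph{any} balanced tight rooted $\theta$-chain — replacing the first alternative by ``$u$ and $v$ lie in distinct chains of $(G,e)$ and $(G,e)$ is minimal'' — so that the passage from $(\cl B,\cl{e_B})$ to one of its chains $D_i$, and then to the chain-block of $D_i$ containing the relevant vertex, becomes self-referential, and then argue that the $\theta$-chain one eventually stops at can be taken to be genuinely minimal (chains of $\del=-\tfrac12$) by trimming, invoking \ref{main1} to dispose of or replace any chain-block with $\del=-1$. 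Controlling this trimming so that the terminal $\theta$-chain is minimal rather than merely balanced and tight, and verifying that $u$ and $v$ are not separated only into distinct chain-blocks of a common chain at the last step, is where I expect the bulk of the casework to lie, and where statements \ref{main1/2} and \ref{main1} of Theorem~\ref{thm:main} for smaller graphs are used most heavily.
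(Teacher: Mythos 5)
Your reduction is the same as the paper's up to the decisive point: you pass to the chain $C_1$ containing both $u$ and $v$, use minimality of $C_1$ to get $\del(\cl{C_1},e_{C_1})=-\tfrac12$ and (via \ref{main1/2}) that $(\cl{C_1},e_{C_1})$ is a balanced tight rooted $\theta$-chain rather than a loop, and in the sub-case where it is moreover \emph{minimal} you finish exactly as the paper does (take $C=C_1$, or recurse). The gap is the remaining sub-case, which you flag as ``the main obstacle'' and propose to handle by strengthening the induction to arbitrary balanced tight rooted $\theta$-chains and then ``trimming'' — a plan you do not carry out and which would indeed generate substantial casework.

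That sub-case is in fact vacuous, and the paper disposes of it in one line. Since $C_1$ is a \emph{minimal} chain it is tight, so $\bdel(\cl{C_1},e_{C_1})=-\del(\cl{C_1},e_{C_1})=\tfrac12$. On the other hand, Lemma \ref{lem:baltighttheta}(ii) applied to the rooted $\theta$-chain $(\cl{C_1},e_{C_1})$ with chains $D_1,D_2$ gives $\bdel(\cl{C_1},e_{C_1})\le \tfrac32+\del(\cl{D_1},e_{D_1})+\del(\cl{D_2},e_{D_2})$, and balancedness makes the right side $\tfrac32+2\del(\cl{D_i},e_{D_i})$. Combining, $\del(\cl{D_i},e_{D_i})\ge -\tfrac12$, hence $=-\tfrac12$ by \ref{main1/2}; since tightness of $(\cl{C_1},e_{C_1})$ already means the $(\cl{D_i},e_{D_i})$ are tight, each $D_i$ is a minimal chain and $(\cl{C_1},e_{C_1})$ is a minimal rooted $\theta$-chain. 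In other words, the closure of a minimal subcubic chain, when it is not a loop, is automatically a \emph{minimal} rooted $\theta$-chain, so the induction closes on the original statement with no strengthening. If you insert this observation where you currently branch into the ``not minimal'' case, your argument becomes the paper's proof.
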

    \begin{proof}
    We proceed by induction on $n(G)$. Clearly, the first outcome holds if $n(G)=4$.
    Suppose $u,v$ both belong to one chain $D$ of $(G,e)$ (note that neither $u$ nor $v$ is incident with $e$). 
    By the definition of a minimal $\theta$-chain, $D$ is a minimal subcubic chain, so $\del(\cl D,e_{D})=-\frac{1}{2}$. 
    Since, Theorem \ref{thm:main} holds for $(\cl D,e_D)$, (by assumption), so $(\cl D,e_{D})$ is either a loop or a balanced tight rooted $\theta$-chain.
    
    Since $u,v$ are distinct vertices in $\cl D$,  $(\cl D,e_D)$ must be a balanced tight rooted $\theta$-chain.
    In fact, $(\cl D,e_{D})$ is a minimal rooted $\theta$-chain. Indeed, we have $\bdel(\cl D,e_{D}) = \frac{1}{2}$ since $D$ is minimal (hence tight), but we also have by Lemma \ref{lem:baltighttheta} that $\bdel(\cl D,e_{D}) \leq \frac{3}{2}+2\del(\cl {D'},e_{D'})$ where $D'$ is a chain of $(\cl D,e_{D})$.
    This implies that $\del(\cl {D'},e_{D'})=-\frac{1}{2}$ for each chain ${D'}$ of $(\cl D,e_{D})$ (since $\del(\cl{D'},e_{D'})\leq -\frac{1}{2}$ as Theorem \ref{thm:main} holds for $(\cl{D'},e_{D'})$ by assumption).
    Therefore, $(\cl D,e_{D})$ is a minimal rooted $\theta$-chain$.
    
    Thus, if $u$ and $v$ belong to distinct chains of $(\cl D,e_{D})$, then we are done with $\mcc =D_1$.
    Otherwise, by the inductive hypothesis, there is a subcubic chain $\mcc$ of $\cl D$ (hence of $G$) such that $(\cl\mcc,e_{\mcc})$ is a minimal balanced tight rooted $\theta$-chain and $u$ and $v$ belong to distinct chains of $(\cl \mcc,e_\mcc)$.
    \end{proof}
    \fi
    \ifx
    \begin{cor}
    Let $G$ be a minimal $\theta$-chain and let $u,v$ be distinct vertices of degree 2 in $G$.
    Assume that Theorem \ref{thm:main} holds for graphs with fewer than $n(G)$ vertices.
    Then there is a choice of three chains $C_1,C_2,C_3$ of $G$ such that $u \in C_1$ and $v\in C_2$.
    \end{cor}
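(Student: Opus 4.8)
The plan is to reduce this unrooted statement to the rooted Lemma~\ref{lem:thetadistinctchains} and then lift the resulting decomposition back to $G$. First I would fix any three chains $C_1,C_2,C_3$ of $G$ with common endpoints $a,b$. As $u,v$ have degree $2$ while $a,b$ have degree $3$, each of $u,v$ is an interior vertex of some chain, so if $u,v$ lie in distinct chains we are done; assume therefore that $u,v\in C_3$. Since $G$ is a minimal $\theta$-chain, $C_3$ is a minimal subcubic chain, whence $(\cl{C_3},e_{C_3})$ is tight with $\del(\cl{C_3},e_{C_3})=-\tfrac12$. As $\cl{C_3}$ contains the two distinct vertices $u,v$, it is not a loop, so part \ref{main1/2} of Theorem~\ref{thm:main} (valid since $n(\cl{C_3})<n(G)$) forces $(\cl{C_3},e_{C_3})$ to be a balanced tight rooted $\theta$-chain; combined with $\bdel(\cl{C_3},e_{C_3})=-\del(\cl{C_3},e_{C_3})=\tfrac12$, Lemma~\ref{lem:baltighttheta}(iii) then identifies it as a \emph{minimal} rooted $\theta$-chain. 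Note that $u,v$ are still distinct degree-$2$ vertices of $\cl{C_3}$, since closure preserves the degrees of interior vertices.

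Next I would apply Lemma~\ref{lem:thetadistinctchains} to $(\cl{C_3},e_{C_3})$. This produces a minimal rooted $\theta$-chain $(\cl C,e_C)$ --- with $C=C_3$ if $u,v$ already separate in $\cl{C_3}$, and otherwise $C$ a subcubic chain of $\cl{C_3}$ --- whose two chains $D_1,D_2$ satisfy $u\in D_1$ and $v\in D_2$. Writing $p,q$ for the endpoints of $e_C$ (equivalently, the common endpoints of $D_1$ and $D_2$), the goal is to exhibit $G$ as a $\theta$-chain with branch vertices $p,q$ and chains $D_1,D_2,E$, where $E$ denotes the subgraph of $G$ obtained by deleting all interior (non-$\{p,q\}$) vertices of $D_1$ and $D_2$. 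Relabelling $(D_1,D_2,E)$ as $(C_1,C_2,C_3)$ would then complete the proof, as $u\in D_1$ and $v\in D_2$ lie in different chains.

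The crux --- and the step I expect to be most delicate --- is checking that $D_1,D_2,E$ is a genuine $\theta$-chain decomposition of $G$; this rests on three lifting claims. First, $D_1,D_2$ are honest subgraphs of $G$: at every level of the recursion inside Lemma~\ref{lem:thetadistinctchains} the two chains of a closure avoid that level's closure edge, so inductively $D_1\cup D_2=\cl C-e_C$ lies in $C_3-\{a,b\}\subseteq G$ and contains no spurious closure edge. Second, $\deg_G(p)=\deg_G(q)=3$: each of $p,q$ is a branch vertex of $\cl C$, incident there to $e_C$ and to one edge of each of $D_1,D_2$; since $e_C$ is only a closure edge, in $G$ its place is taken by exactly one outward edge, so $p,q$ have degree $3$ in $G$ with precisely one edge apiece leaving $D_1\cup D_2$. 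Third, $E$ is a subcubic chain from $p$ to $q$: by the first claim every interior vertex of $D_1\cup D_2$ has all of its $G$-edges inside $D_1\cup D_2$, so deleting these interiors lowers only the degrees of $p,q$; thus in $E$ exactly $p$ and $q$ have degree $1$ and all remaining vertices keep degree $2$ or $3$.

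It then remains to show that $E$ is \emph{linear}, i.e.\ that its blocks form a single $p$--$q$ path rather than a branching tree of blocks; this is where the nested structure of $C_3$ and the $2$-connected subgraph $C_1\cup C_2$ enter. The two smallest cases already display the general pattern. If $C=C_3$, then writing $e_0,e_k$ for the end edges of $C_3$ and $x_1,y_k$ for the neighbours of $a,b$ on $C_3$, one finds $E=x_1\,e_0\,(C_1\cup C_2)\,e_k\,y_k$, a chain with the single block $C_1\cup C_2$. One level deeper, $E$ is a chain whose block is the $2$-connected graph spanned by $C_1\cup C_2$ together with the untouched chain of $\cl{C_3}$, joined to $p,q$ through cut-edges at $x_1,y_k$. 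Performing this block analysis and degree bookkeeping uniformly across the (possibly deep) nesting returned by Lemma~\ref{lem:thetadistinctchains} is, I expect, the main obstacle.
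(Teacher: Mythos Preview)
The paper states this corollary without proof, so there is no argument to compare against directly. Your reduction to Lemma~\ref{lem:thetadistinctchains} and the lifting of $D_1,D_2$ back into $G$ are correct, and the strategy is the natural one. However, you leave the argument unfinished at precisely the step you flag as the ``main obstacle,'' and you misjudge its difficulty: the linearity of $E$ has nothing to do with the depth of the recursion in Lemma~\ref{lem:thetadistinctchains}; it follows immediately from the $2$-connectivity of $G$.

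First, $E$ is connected: if some component $W$ of $E$ avoided $p$, then every vertex of $W\setminus\{q\}$ has all of its $G$-neighbours in $E$ (only $p,q$ touch $D_1\cup D_2$), hence in $W$; so either $W$ is a component of $G$ (if $q\notin W$) or $q$ is a cut-vertex of $G$ (if $q\in W$), both impossible. Second, since $p,q$ are the only degree-$1$ vertices of $E$, the bridges at $p$ and $q$ are leaf blocks of the block tree of $E$. If there were a third leaf block $B$, it would contain exactly one cut-vertex $w$ and would not contain $p$ or $q$ (a degree-$1$ vertex lies only in its own bridge block); then every vertex of $B-w$ has all its $G$-neighbours in $B$, making $w$ a cut-vertex of $G$ --- contradiction. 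Hence the block tree of $E$ has exactly two leaves and is a path, $E$ is a subcubic chain from $p$ to $q$, and $(D_1,D_2,E)$ is the desired decomposition of $G$. No inductive bookkeeping along the nesting is required.
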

    \fi
    \fi

	\begin{lemma} \label{lem:block3vx}
		Let $Z$ be a 2-connected simple subcubic graph and let $u,v_1,v_2$ be three distinct vertices of degree 2 in $Z$. Assume Theorem \ref{thm:main} holds for graphs with at most $n(Z)$ vertices. Then $\del(Z+uv_1,uv_1) + \del(Z+uv_2,uv_2) \leq -2$.
	\end{lemma}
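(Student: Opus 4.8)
For $i\in\{1,2\}$ put $G_i:=Z+uv_i$. Then $G_i$ is a $2$-connected subcubic graph with $n(G_i)=n(Z)$, every vertex of $G_i$ has degree $2$ or $3$, and $G_i-uv_i=Z$ is simple; hence $\del(G_i,uv_i)$ is half-integral and, by \ref{main1/2}, $\del(G_i,uv_i)\le-\tfrac12$. Consequently, if $\del(G_1,uv_1)+\del(G_2,uv_2)>-2$, then one of these two values equals $-\tfrac12$ and the other lies in $\{-\tfrac12,-1\}$. Thus it suffices to establish the implication
\[
\del(G_1,uv_1)=-\tfrac12\ \Longrightarrow\ \del(G_2,uv_2)\le-\tfrac32
\]
together with its mirror image obtained by swapping the indices $1$ and $2$.

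So assume $\del(G_1,uv_1)=-\tfrac12$. Since $n(Z)\ge3$, $G_1$ is not a loop, so by \ref{main1/2} $(G_1,uv_1)$ is a balanced tight rooted $\theta$-chain: $Z=P_1\cup P_2$ for internally disjoint subcubic chains $P_1,P_2$ with common endpoints $u,v_1$, with $d:=\del(\cl{P_1},e_{P_1})=\del(\cl{P_2},e_{P_2})$ and both $(\cl{P_i},e_{P_i})$ tight. Neither $P_i$ is trivial — otherwise balancedness would force $\del(\cl{P_{3-i}},e_{P_{3-i}})=0$, contradicting either that $Z$ is simple or the consequence of \ref{main1/2} that a nontrivial subcubic chain has $\del\le-\tfrac12$ — so $d\le-\tfrac12$. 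By Proposition~\ref{prop:tight}, applied with the inductive form of \ref{maindelbdel}, every chain-block of $P_1$ and of $P_2$ is tight. Since $v_2\notin\{u,v_1\}$, after possibly interchanging $P_1$ and $P_2$ we may assume $v_2$ lies in the interior of some chain-block $(\cl B,\cl{e_B})$ of $P_1$; because $v_2$ has degree $2$ in $Z$ and lies only in $P_1$, it has degree $2$ in $P_1$ and is incident with no cut-edge, so either $V(B)=\{v_2\}$, or $B$ is $2$-connected with $\deg_B(v_2)=2$ and $v_2\notin\{x_B,y_B\}$.

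Consider first the case $V(B)=\{v_2\}$. Then $P_1$ splits at $v_2$ into subcubic chains $P_1^{a}$ (from $u$ to $v_2$) and $P_1^{b}$ (from $v_2$ to $v_1$), and $P_2^{\ast}:=P_1^{b}\cup P_2$, formed by concatenating $P_1^{b}$ and $P_2$ at $v_1$, is a subcubic chain from $v_2$ to $u$; hence $(G_2,uv_2)$ is a rooted $\theta$-chain with chains $P_1^{a},P_2^{\ast}$, and by Proposition~\ref{prop:tight} both of these chains are tight. By Proposition~\ref{prop:subcubicchains}, $\del(\cl{P_1^{a}},e_{P_1^{a}})+\del(\cl{P_1^{b}},e_{P_1^{b}})=d+\tfrac12$ (the loop-block at $v_2$ contributes $-\tfrac12$), and $\del(\cl{P_2^{\ast}},e_{P_2^{\ast}})=\del(\cl{P_1^{b}},e_{P_1^{b}})+\del(\cl{P_2},e_{P_2})-\tfrac12=2d-\del(\cl{P_1^{a}},e_{P_1^{a}})$. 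Feeding this into Lemma~\ref{lem:baltighttheta}(i) and using tightness to replace each $\bdel$ by $-\del$ gives
\[
\del(G_2,uv_2)=-\tfrac12-2\bigl|\del(\cl{P_1^{b}},e_{P_1^{b}})-\tfrac12\bigr| .
\]
Since $\del(\cl{P_1^{b}},e_{P_1^{b}})\le0$ (it is $0$ if $P_1^{b}$ is trivial and at most $-\tfrac12$ otherwise), the absolute value is at least $\tfrac12$, so $\del(G_2,uv_2)\le-\tfrac32$, as wanted.

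The remaining case, where $v_2$ is interior to a $2$-connected block $B$ of $P_1$, is the main obstacle: now $(G_2,uv_2)$ is not directly a rooted $\theta$-chain, since the new edge $uv_2$ enters $B$ at a vertex strictly inside it. Here the two cut-edges of $P_1$ bounding $B$ form a $2$-edge-cut of $Z$, so $\cl B=B+\cl{e_B}$ is the closure of a subcubic chain $C$ of $Z$ and Proposition~\ref{prop:contractchain} lets one strip $C$ down to a single edge; moreover $x_B,y_B,v_2$ are three distinct degree-$2$ vertices of $B=\cl B-\cl{e_B}$. The plan is to apply the inductive hypothesis — Lemma~\ref{lem:block3vx} itself to the strictly smaller graph $B$ with these three vertices — to bound $\del(\cl B,\cl{e_B})$ together with $\del(B+x_Bv_2,x_Bv_2)$ and $\del(B+y_Bv_2,y_Bv_2)$, and then to transport these local bounds to $Z$ by analysing an optimal $u$–$v_2$ path $\pi$ on the cycle realizing $\minexc(G_2,uv_2)$: such a $\pi$ must enter $B$ at $x_B$ or $y_B$, and whichever "side" of $B$ it does not traverse to reach that entry vertex is forced to contribute extra excess to $Z-V(\pi)$, which yields $\del(G_2,uv_2)\le-\tfrac32$. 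The bookkeeping in this transport step, and the degenerate sub-cases (when $\del(\cl B,\cl{e_B})<-\tfrac12$, or when $x_B$ or $y_B$ is adjacent to $u$ or $v_1$), are where most of the technical work lies; once the bound $\del(G_2,uv_2)\le-\tfrac32$ is obtained, the proof is complete.
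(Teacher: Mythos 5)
Your reduction to the implication ``$\del(Z+uv_1,uv_1)=-\frac12 \Rightarrow \del(Z+uv_2,uv_2)\le-\frac32$'' is sound, and your Case 1 (where $v_2$ is a trivial chain-block of $P_1$) is essentially correct, modulo one edge case: if $uv_2\in E(Z)$ then $Z+uv_2$ is not simple, so Lemma \ref{lem:baltighttheta}, which is stated for \emph{simple} rooted $\theta$-chains, cannot be invoked verbatim. The real problem is Case 2, where $v_2$ lies in the interior of a $2$-connected chain-block $B$. There you give only a plan --- induct on the lemma itself applied to $B$ with $x_B,y_B,v_2$, then ``transport'' the bounds by analysing an optimal path --- and you explicitly concede that ``most of the technical work lies'' in that transport step and its degenerate sub-cases. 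None of that work is carried out, so the proof is incomplete precisely at its hardest point, and it is not evident that the sketched path-analysis would close it.

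The paper avoids the case split entirely with one observation you are missing. Writing $C_1=v_1e_0B_1e_1\dots B_ke_ku$ for the chain containing $v_2$, with $v_2\in B_\ell$ and $v'$ the endpoint of $e_{\ell-1}$ in $B_\ell$, one builds an even cover of $Z+uv_2$ whose cycle uses $uv_2$, crosses $B_\ell$ from $v_2$ to $v'$, continues through $B_{\ell-1},\dots,B_1$ and all of $C_2$, and avoids $B_{\ell+1},\dots,B_k$. The key identity is
\begin{align*}
\bminexc(B_\ell+v'v_2,\,v'v_2)=\bminexc(\cl{B_\ell},\cl{e_\ell}),
\end{align*}
since both sides equal the minimum excess of an even cover of $B_\ell$; combined with \ref{maindelbdel} applied to the smaller graph $(B_\ell+v'v_2,v'v_2)$ this gives $\del(B_\ell+v'v_2,v'v_2)\le-\bdel(\cl{B_\ell},\cl{e_\ell})$, regardless of whether $B_\ell$ is a single vertex or $2$-connected. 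Telescoping with Proposition \ref{prop:subcubicchains} and the tightness of all chain-blocks then yields $\del(Z+uv_2,uv_2)\le-\frac12-2\sum_{j=1}^{\ell}\bdel(\cl{B_j},\cl{e_j})\le-\frac32$. No self-induction on the lemma and no path analysis inside $B_\ell$ is needed; I recommend you replace your Case 2 plan (and in fact your Case 1 computation as well) with this uniform argument.
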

	
	\iflong 
	
	\begin{proof}
	    By the assumption that Theorem \ref{thm:main} holds for graphs with at most $n(Z)$ vertices, we have $\del(Z+uv_i,uv_i)\leq -\frac{1}{2}$ for each $i\in[2]$, with equality if and only if $(Z+uv_i,uv_i)$ is a balanced tight rooted $\theta$-chain.
		If both $\del(Z+uv_1,uv_1)\le -1$ and $\del(Z+uv_2,uv_2) \leq -1$, then there is nothing to prove.
		So we may assume by symmetry that $\del(Z+uv_1,uv_1) = -\frac{1}{2}$; thus $(Z+uv_1,uv_1)$ is a balanced tight rooted $\theta$-chain. Note that it suffices to show that $\del(Z+uv_2,uv_2)\leq -\frac{3}{2}$.
		
		Let $C_1,C_2$ denote the two chains of $(Z+uv_1,uv_1)$.
		Let us assume without loss of generality that $v_2 \in V(C_1)$.
		Write $C_1 = v_1e_0B_1e_1B_2\dots B_{k}e_{k}u$ (where $k\geq 1$) and write its chain-blocks $(\cl{B_i}, \cl{e_i})$ for all $i\in[k]$.
		Since $C_1,C_2$ are balanced, we have $\del(\cl{C_1},e_{C_1}) = \del(\cl{C_2},e_{C_2})$, and since they are both tight, we have $\del(\cl{C_i},e_{C_i})+\bdel(\cl{C_i},e_{C_i})=0$ for $i\in [2]$. So by Proposition \ref{prop:tight} and the assumption that Theorem \ref{thm:main} holds for each $(\cl{B_i},\cl{e_i})$, we have
		\begin{equation}\label{eqn:lem:block3vxtightblocks}
		    \del(\cl{B_i}, \cl{e_i}) + \bdel(\cl{B_i},\cl{e_i}) = 0 \qquad \text{for all $i\in[k]$.}
		\end{equation}
		Let $\ell\in[k]$ be the unique index such that $v_2 \in B_\ell$. (Note $\ell$ is well defined as $Z$ is subcubic and $v_2$ has degree 2 in $Z$.)  
		Let $v'$ denote the vertex of $B_\ell$ incident with $e_{\ell-1}$.
		
		Then there is an even cover $F\in \ec(Z+uv_2, uv_2)$ obtained from a tuple $(F',F_1,\dots,F_k)$ where
		$F'\in \ec(\cl{C_2},e_{C_2})$, $F_i\in \ec(\cl{B_i},\cl{e_i})$ for each $i\in[\ell-1]$, $F_\ell\in \ec(B_\ell+v'v_2,v'v_2)$, and $F_j\in\bec(\cl{B_j},\cl{e_j})$ for each $j=\ell+1,\dots,k$.
		This gives $\exc(F)-2 = (\exc(F')-2)+ \sum_{i=1}^\ell (\exc(F_i)-2) + \sum_{j=\ell+1}^k \exc(F_j)$.
		Moreover, since $n(B_\ell+v'v_2)=n(\cl{B_\ell})$ and $n_2(B_\ell+v'v_2)=n_2(\cl{B_\ell})$, we have
		\begin{align*}
		    n(Z+uv_2) &= 2+n(\cl{C_2})+\sum_{i=1}^{\ell-1}n(\cl{B_i})+n(B_\ell+v'v_2)+\sum_{j=\ell+1}^k n(\cl{B_j}), \\
		    n_2(Z+uv_2) &= n_2(\cl{C_2})+\sum_{i=1}^{\ell-1}n_2(\cl{B_i})+n_2(B_\ell+v'v_2)+\sum_{j=\ell+1}^k n_2(\cl{B_j}).
		\end{align*}
		This gives
		\begin{align*}
			\minexc(Z+uv_2,uv_2)
			&\leq \minexc(\cl{C_2},e_{C_2}) + \sum_{i=1}^{\ell - 1} \minexc(\cl{B_i}, \cl{e_i}) + \minexc(B_\ell+v'v_2,v'v_2)+ \sum_{j=\ell+1}^k \bminexc(\cl{B_j},\cl{e_j}) \\
			&= \frac{n(Z+uv_2)+n_2(Z+uv_2)}{4} - \frac{1}{2} + \del(\cl{C_2},e_{C_2}) + \sum_{i=1}^{\ell-1}\del(\cl{B_i}, \cl{e_i})\\
			&\qquad +\del(B_\ell+v'v_2,v'v_2)  + \sum_{j=\ell+1}^k \bdel(\cl{B_j},\cl{e_j}),
		\end{align*}
		whence
		\begin{align*}
		    \del(Z+uv_2,uv_2)
			&\leq -\frac{1}{2}+ \del(\cl{C_2},e_{C_2}) + \sum_{i=1}^{\ell - 1} \del(\cl{B_i}, \cl{e_i}) + \del(B_\ell+v'v_2,v'v_2) + \sum_{j=\ell+1}^k \bdel(\cl{B_j},\cl e^j).
		\end{align*}
		
		Note that $\bminexc(\cl{B_\ell},\cl{e_\ell}) = \bminexc(B_\ell+v'v_2,v'v_2)$ since both quantities are equal to the minimum excess of an even cover of $B_\ell$. This implies $\bdel(\cl{B_\ell},\cl{e_\ell})=\bdel(B_\ell+v'v_2,v'v_2)$.
		Using (\ref{eqn:lem:block3vxtightblocks}) and that $\del(B_\ell+v'v_2,v'v_2)+\bdel(B_\ell+v'v_2,v'v_2)\leq 0$ as Theorem \ref{thm:main} holds for $(B_\ell+v'v_2,v'v_2)$ (by assumption), we have
		\begin{align*}
			\del(Z+uv_2,uv_2)
			&\leq -\frac{1}{2}+\del(\cl{C_2},e_{C_2}) + \sum_{i=1}^{\ell - 1} \big(-\bdel(\cl{B_i}, \cl{e_i})\big) + \big(-\bdel(B_\ell+v'v_2,v'v_2)\big) + \sum_{j=\ell+1}^k \bdel(\cl{B_j},\cl{e_j}) \\
			&= -\frac{1}{2}+ \del(\cl{C_2},e_{C_2}) + \sum_{i=1}^{\ell - 1} \big(-\bdel(\cl{B_i}, \cl{e_i})\big) + \big(-\bdel(\cl{B_\ell}, \cl{e_\ell})\big) + \sum_{j=\ell+1}^k \bdel(\cl{B_j},\cl{e_j}) \\
			&= -\frac{1}{2}+ \del(\cl{C_2},e_{C_2}) + \sum_{j=1}^{k} \bdel(\cl{B_j},\cl{e_j}) - 2\sum_{j=1}^\ell \bdel(\cl{B_i}, \cl{e_i}) \\
			&= -\frac{1}{2}+\del(\cl{C_2},e_{C_2})+\bdel(\cl{C_1},e_{C_1}) - 2\sum_{j=1}^\ell \bdel(\cl{B_j},\cl{e_j}) \tag{by Proposition \ref{prop:subcubicchains}}\\
			&= -\frac{1}{2} - 2\sum_{j=1}^\ell \bdel(\cl{B_j}, \cl{e_j}) \tag{as $C_1$ and $C_2$ are balanced and tight} \\
			&\leq -\frac{3}{2},
        \end{align*}
        since $-\bdel(\cl{B_j},\cl{e_j})=\del(\cl{B_j}, \cl{e_j})\le -1/2$ for all $j\in[k]$ by \eqref{eqn:lem:block3vxtightblocks} and the assumption that Theorem \ref{thm:main} holds for $(\cl{B_j},\cl{e_j})$.
	\end{proof}
	\fi

	We can now prove the following lemma for $\theta$-chains.
	
\ifx
	\begin{lemma} \label{lem:minimaltheta}
	    Let $G$ be a subcubic graph with $e=uv\in E(G)$ such that $G-e$ is a minimal $\theta$-chain, and assume that Theorem \ref{thm:main} holds for graphs with fewer than $n(G)$ vertices.
	    Then $\bdel(G,e) = \frac{3}{2}$, and $\del(G,e)\leq -\frac{3}{2}$ with equality if and only if $e$ forms a simple chord of a 4-cycle in $G-e$.
	\end{lemma}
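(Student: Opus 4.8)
Throughout, write $H := G-e$ and fix three minimal chains $C_1,C_2,C_3$ of $H$, with common endpoints $x,y$ (which have degree $3$ in $H$) and closures $(\cl{C_i},e_{C_i})$; since each $C_i$ is minimal, $\del(\cl{C_i},e_{C_i})=-\tfrac12$ and $\bdel(\cl{C_i},e_{C_i})=\tfrac12$. As $x,y$ have degree $3$, both $u$ and $v$ have degree $2$ in $H$, so $n(G)=n(H)$ and $n_2(G)=n_2(H)-2$. For $\bdel(G,e)$: the even covers of $G$ avoiding $e$ are exactly the even covers of $H$, so $\bminexc(G,e)=\exc(H)$, and it suffices to show $\exc(H)=\frac{n(H)+n_2(H)}{4}+1$. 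I would prove this by examining an even cover $F$ of $H$ at the branch vertices: a cycle of $F$ that enters a chain at one of $x,y$ must leave it at the other, so either a single cycle of $F$ runs through exactly two of the three chains (and through $x,y$), the third chain being cycle-free, or $x$ and $y$ are both isolated in $F$. Splitting $F$ along the chain closures (as in Propositions~\ref{prop:subcubicchains} and \ref{prop:contractchain}), the first possibility yields $\exc(F)\ge\minexc(\cl{C_j},e_{C_j})+\minexc(\cl{C_l},e_{C_l})+\bminexc(\cl{C_i},e_{C_i})+2$ when $C_i$ is the avoided chain (and this is attained for a suitable $F$), while the second yields $\exc(F)\ge 2+\sum_{i=1}^3\bminexc(\cl{C_i},e_{C_i})$. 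Writing $N_i:=n(\cl{C_i})+n_2(\cl{C_i})$ and using $\minexc(\cl{C_i},e_{C_i})=\tfrac{N_i}{4}-\tfrac12$, $\bminexc(\cl{C_i},e_{C_i})=\tfrac{N_i}{4}+\tfrac12$, and $n(H)+n_2(H)=2+\sum_i N_i$, the first expression equals $\frac{n(H)+n_2(H)}{4}+1$ and the second $\frac{n(H)+n_2(H)}{4}+3$. Hence $\exc(H)=\frac{n(H)+n_2(H)}{4}+1$ and $\bdel(G,e)=\exc(H)-\frac{n(G)+n_2(G)}{4}=\tfrac32$.

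For $\del(G,e)$, observe that a cycle of $G$ through $e$ is $e$ together with a $u$–$v$ path $P$ of $H$, and the remaining vertices are covered within $H-V(P)$; thus $\minexc(G,e)=\min_P\exc(H-V(P))$, the minimum over $u$–$v$ paths $P$ of $H$. Since $\exc(H)=\frac{n(G)+n_2(G)}{4}+\tfrac32$ by the first paragraph, $\del(G,e)\le-\tfrac32$ is equivalent to some $P$ satisfying $\exc(H-V(P))\le\exc(H)-3$, and $\del(G,e)=-\tfrac32$ to no $P$ doing better than $\exc(H)-3$. I would split according to whether $u,v$ lie on distinct chains of $H$ (say $u\in C_1$, $v\in C_2$) or on the same chain $C_1$, hence inside its unique $2$-connected chain-block $B_1$. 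In the distinct-chains case, route $P$ from $u$ through $x$, through all of $C_3$, through $y$, to $v$, taking the $C_1$- and $C_2$-portions of $P$ to cover as much of $C_1,C_2$ (and, when the relevant blocks are $2$-connected, of $B_1,B_2$) as possible; then $H-V(P)$ is a disjoint union of two remnants lying in $C_1$ and $C_2$, whose excesses I would bound using Theorem~\ref{thm:main} (parts~\ref{main1/2},\ref{main3/2},\ref{main1}) and the rooted-$\theta$-chain Lemma~\ref{lem:baltighttheta} applied to the strictly smaller closures and blocks involved. In the same-chain case, I would instead invoke Theorem~\ref{thm:main} on $(B_1+uv,uv)$ together with $\bminexc(B_1+uv,uv)=\bminexc(\cl{B_1},\cl{e_1})$ and the minimality of $C_2,C_3$, paralleling the proof of Lemma~\ref{lem:block3vx}. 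In all cases the resulting estimate is $\le\exc(H)-3$, so $\del(G,e)\le-\tfrac32$. Tracing when every inequality above is an equality forces the two remnants to be empty (so $P$ spans $H$) and the subchains or block-portions of $C_1,C_2$ ``cut off'' by $P$ to have been single edges from $u$, respectively $v$, to a branch vertex (respectively to a common neighbour inside a block) — that is, $u$ and $v$ have two common neighbours in $H$, so $e$ is a simple chord of a $4$-cycle of $H$, possibly one lying inside a chain-block. Conversely, if $e$ is a simple chord of a $4$-cycle $Q$ of $H$, I would check directly that no $u$–$v$ path $P$ gives $\exc(H-V(P))\le\exc(H)-4$: because $H$ is a minimal $\theta$-chain, the excess saved by deleting $V(P)$ is governed by the $\del$ and $\bdel$ of the pieces $P$ cuts off, and by minimality and tightness these savings never exceed the $3$ already realized via $Q$.

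The hardest part will be this equality analysis: proving that the $4$-cycle-chord configuration is the \emph{only} obstruction to driving $\minexc(G,e)$ strictly below $\exc(H)-3$. This requires a sharp lower bound for $\exc(H-V(P))$ in terms of $\exc(H)$ and the structure removed by $P$, and it must accommodate the recursive block structure of minimal chains, so that the extremal $4$-cycle may be buried inside a chain-block of $H$ rather than using the branch vertices $x,y$. The bookkeeping of $n$ and $n_2$ under vertex deletions, together with the separate treatment of single-vertex versus $2$-connected chain-blocks, is where most of the technical work lies.
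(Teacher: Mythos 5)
Your computation of $\bdel(G,e)=\tfrac32$ is correct and is essentially the paper's argument. The gap is in the proof that $\del(G,e)\le-\tfrac32$, specifically in the case where $u$ and $v$ lie in the same chain $C_1$ of $G-e$, hence in its unique $2$-connected chain-block $B_1$. The tools you name there do not suffice. If the cycle through $e$ is confined to $B_1+e$, then $x,y$ and the chains $C_2,C_3$ must be covered separately, and the resulting bound is $\del(G,e)\le \del(B_1+uv,uv)+\del(\cl{C_2},e_{C_2})+\del(\cl{C_3},e_{C_3})+2=\del(B_1+uv,uv)+1$, which is only $\le\tfrac12$; indeed $\del(B_1+uv,uv)$ can equal $-\tfrac12$ (take $B_1$ a $4$-cycle with $u,v$ opposite, obtained from $K_{2,3}$ by one $\diamond$-operation, where the true value of $\del(G,e)$ is still $-\tfrac32$). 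If instead the cycle through $e$ exits $B_1$ via the two cut-edges of $C_1$, then inside $B_1$ one needs two disjoint paths from $\{u,v\}$ to the two attachment vertices together with an even cover of the rest of $B_1$ --- a ``two edges forced into one cycle'' quantity that the $\minexc/\bminexc$ framework does not provide, and that Lemma \ref{lem:block3vx} (which treats one endpoint at an apex and one inside a block) does not cover. The paper escapes this by a recursive descent: since $(\cl{B_1},\cl{e_1})$ is itself a minimal rooted $\theta$-chain, one descends through the nested minimal rooted $\theta$-chains until $u$ and $v$ lie in \emph{distinct} chains $\mcc_1,\mcc_2$ of some nested $(\cl{\mcc},e_{\mcc})$ with unique blocks $B_1',B_2'$ and endpoints $x,y$; only then does the three-term decomposition $\del(G,e)\le\del(B_1'+x'u,x'u)+\del(B_2'+vy',vy')+\del(\cl{\mcd},e_{\mcd})\le-\tfrac32$ (with $\mcd=G-B_1'-B_2'$) apply, and Lemma \ref{lem:block3vx} is invoked only afterwards, in the equality analysis, to force $B_1'$ and $B_2'$ to be single vertices. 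Your proposal is missing this descent, which is precisely the mechanism that locates the $4$-cycle ``buried inside a chain-block.''

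A secondary point: your equality analysis asserts that equality forces the $u$--$v$ path $P$ to span $H$. That is false --- if some chain of $G-e$ contains a $4$-cycle block, the extremal path through that chain leaves one vertex of the block isolated --- so the characterization of equality cannot be read off from ``$P$ spans $H$''; it must come from forcing each $\del$-term in the decomposition to equal $-\tfrac12$ and then applying Lemma \ref{lem:block3vx} to rule out $2$-connected $B_1',B_2'$. (Your converse direction, that a simple chord of a $4$-cycle forces equality, is only gestured at; to be fair, the paper's own proof does not supply that direction either.)
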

	
	\iflong
	\begin{proof}
    By the definition of a minimal $\theta$-chain, there is a choice of three chains $C_1,C_2,C_3$ with common endpoints of $G-e$ such that
    \begin{equation}\label{eqn:lem:mtheta1}
        \del(\cl{C_i},e_{C_i})=-\bdel(\cl{C_i},e_{C_i})=-\frac{1}{2} \quad \text{ for each } i\in[3].
    \end{equation}
    An even cover $F$ of $G$ not containing $e$ (i.e. $F\in \bec(G,e)$) corresponds to a triple $(F_1,F_2,F_3)$ where $F_i$ is an even cover of $\cl{C_i}$ for each $i\in[3]$ and $F_i$ contains a cycle through $e_{C_i}$ for an even number of $i\in[3]$.
    Note that $n(G) = 2+\sum_{i=1}^3 n(\cl{C_i})$ and $n_2(G) = -2+\sum_{i=1}^3 n_2(\cl{C_i})$ (since the $\cl{C_i}$'s do not account for the edge $e$).
    
    To prove that $\bdel(G,e)=\frac{3}{2}$, let $F$ be a minimum excess even cover of $G$ not containing $e$.
    We claim that $F$ contains a cycle through two of $\{C_1,C_2,C_3\}$.
    First, if $F$ does not contain a cycle through any $C_i$, then in the corresponding triple $(F_1,F_2,F_3)$, we have $F_i \in \bec(\cl{C_i},e_{C_i})$ for each $i\in[3]$, and $\exc(F) = 2+\sum_{i=1}^3\exc(F_i)$.
    So
    \begin{align*}
        \exc(F) &= 2 + \sum_{i=1}^3 \bminexc(\cl{C_i},e_{C_i}) \\
        &= 2+\sum_{i=1}^3 \left(\frac{n(\cl{C_i})+n_2(\cl{C_i})}{4} + \bdel(\cl{C_i},e_{C_i})\right) \\
        &= \frac{n(G)+n_2(G)}{4}+2 + \sum_{i=1}^3 \bdel(\cl{C_i},e_{C_i}) \\
        &= \frac{n(G)+n_2(G)}{4}+ \frac{7}{2}. \tag{by \eqref{eqn:lem:mtheta1}}
    \end{align*}
    On the other hand, if $F$ contains a cycle through two of $\{C_1,C_2,C_3\}$, say through $C_1$ and $C_2$, then in the corresponding triple $(F_1,F_2,F_3)$, we have $F_i \in \ec(\cl{C_i},e_{C_i})$ for $i\in[2]$ and $F_3\in \bec(\cl{C_3},e_{C_3})$. Furthermore, we have $\exc(F) -2 = (\exc(F_1)-2)+(\exc(F_2)-2)+\exc(F_3)$.
    So 
    \begin{align*}
        \exc(F) - 2
        &= \minexc(\cl{C_1},e_{C_1})+\minexc(\cl{C_2},e_{C_2})+\bminexc(\cl{C_3},e_{C_3}) \\
        &= \frac{n(G)+n_2(G)}{4} + \del(\cl{C_1},e_{C_1})+\del(\cl{C_2},e_{C_2})+\bdel(\cl{C_3},e_{C_3}) \\
        &= \frac{n(G)+n_2(G)}{4} - \frac{1}{2}. \tag{by \eqref{eqn:lem:mtheta1}}
    \end{align*}
    Therefore, $\bminexc(G,e) = \exc(F) = \frac{n(G)+n_2(G)}{4}+\frac{3}{2}$, and $\bdel(G,e) = \frac{3}{2}$.
    
    \ifx
    \yy{--------(example of slightly simpler calculations for $\bdel(G,e)$ using Proposition 2.3)--------}
    $G':=(G-e)/\{C_1,C_2,C_3\}$ consists of two vertices joined by three parallel edges $e_{C_1},e_{C_2},e_{C_3}$.
    By Proposition \ref{prop:contractchains},
    \begin{align*}
        \min_{F\in \ec(G-e,\emptyset,\{C_1,C_2,C_3\})} \exc(F) - \frac{n(G-e)+n_2(G-e)}{4}
        &=\min_{F'\in \ec(G',\emptyset,\{e_{C_1},e_{C_2},e_{C_3}\})} \exc(F')-\frac{1}{2} + \sum_{i=1}^3 \bdel(\cl{C_i},e_{C_i})\\
        &= \frac{3}{2}+ 3\cdot \frac{1}{2} \\
        &= 3
    \end{align*}
    and
    \begin{align*}
        \min_{F\in \ec(G-e,\{C_1,C_2\},\{C_3\})} \exc(F)-\frac{n(G-e)+n_2(G-e)}{4} 
        &=\frac{3}{2} + \del(\cl{C_1},e_{C_1})+\del(\cl{C_2},e_{C_2})+\bdel(\cl{C_3},e_{C_3})\\
        &= 1.
    \end{align*}
    Therefore,
    \begin{align*}
        \bminexc(G,e) &= \min_{F\in\ec(G-e)} \exc(F) \\
        &= \frac{n(G-e)+n_2(G-e)}{4}+1 \\
        &= \frac{n(G)+n_2(G)}{4}+\frac{3}{2},
    \end{align*}
    which gives $\bdel(G,e)=\frac{3}{2}$.
    \yy{---------------}
    \fi
    
    We now show that $\del(G,e)\leq -\frac{3}{2}$ with equality if and only if $e$ forms a simple chord of a 4-cycle in $G-e$.
    Since $u,v$ have degree 2 in $G$, we may assume that $C_3$ is disjoint from $\{u,v\}$.
    Then $((G-e)/C_3, e_{(G-e)/C_3})$ is a minimal rooted $\theta$-chain, so by Lemma \ref{lem:thetadistinctchains}, either $u,v$ belong to distinct chains of $(G-e)/C_3$, or there is a subcubic chain $\mcc$ of $(G-e)/C_3$ (hence of $G-e$) such that $(\cl \mcc,e_\mcc)$ is a minimal rooted $\theta$-chain and $u,v$ belong to distinct chains of $(\cl \mcc,e_\mcc)$.
    In the first case, let us also write $(\cl \mcc,e_\mcc)$ to denote $((G-e)/C_3, e_{(G-e)/C_3})$.
    
    Let $\mcc_1,\mcc_2$ denote the two chains of $(\cl\mcc,e_\mcc)$ such that $u\in \mcc_1$ and $v\in\mcc_2$.
    Let $x,y$ denote the endpoints of $e_\mcc$ and let $(\cl{B_1},\cl{e_1}),(\cl{B_2},\cl{e_2})$ denote the unique chain-blocks of $\mcc_1,\mcc_2$ respectively (so $B_i = \mcc_i - \{x,y\}$). 
    
    \begin{figure}
        \centering
        \includegraphics[scale=0.35]{lem3.4.png}
        \caption{Caption}
        \label{fig:my_label}
    \end{figure}
    
    Let $x'$ be the neighbor of $x$ in $B_1$ and let $y'$ be the neighbor of $y$ in $B_2$.
    Let $\mcd = G-B_1-B_2$, which is a subcubic chain of $G$ with endpoints $\{x,y\}$.
    
    Then an even cover $F$ of $G$ containing $e=uv$ can be obtained from a triple $(F_1,F_2,\mcf)$ where $F_1 \in \ec(B_1+x'u,x'u)$, $F_2\in \ec(B_2+vy',vy')$, and $\mcf\in\ec(\cl\mcd,e_\mcd)$, with $\exc(F)-2 = (\exc(F_1)-2)+(\exc(F_2)-2)+(\exc(\mcf)-2)$.
    This gives
    \begin{align*}
        \minexc(G,e)
        &\leq \minexc(B_1+x'u,x'u)+\minexc(B_2+vy',vy')+\minexc(\cl\mcd,e_\mcd).
    \end{align*}
    We have $n(G) = n(B_1+x'u)+n(B_2+vy')+n(\cl \mcd)+2$ and $n_2(G) = n_2(B_1+x'u)+n_2(B_2+vy')+n_2(\cl \mcd)-2$ (since the neighbor of $y$ in $B_1$ and the neighbor of $x$ in $B_2$ have degrees 2 in $B_1+x'u$ and $B_2+vy'$ respectively, but degrees 3 in $G$).
    Hence,
    \begin{align*}
        \del(G,e) \leq \del(B_1+x'u,x'u)+\del(B_2+vy',vy')+\del(\cl\mcd,e_\mcd) \leq -\frac{3}{2}
    \end{align*}
    by the assumption that Theorem \ref{thm:main} holds for each $(B_1+x'u,x'u), (B_2+vy',vy')$, and $(\cl\mcd,e_\mcd)$.
    
    Now suppose that equality holds.
    Then $\del(B_1+x'u,x'u)=\del(B_2+vy',vy')=\del(\cl\mcd,e_\mcd)=-\frac{1}{2}$.
    Since $x'u$ and $\cl{e_1}$ share $x'$ as an endpoint, if $B_1$ is a 2-connected graph (not a single vertex), then by Lemma \ref{lem:block3vx}, we have $\del(B_1+x'u,x'u)+\del(\cl{B_1},\cl{e_1})\leq -2$, contradicting the assumption that $\del(\cl{B_i},\cl{e_i})=-\frac{1}{2}$ by minimality of $\mcc_i$.
    Thus, $B_1$ is a single vertex, and by symmetry, so is $B_2$.
    It follows that $uxvy$ forms a 4-cycle in $G-e$.
    \end{proof}

    
    \fi
    
\fi

    \begin{lemma} \label{lem:minimaltheta2}
	    Let $G$ be a subcubic graph with $e=uv\in E(G)$ such that $G-e$ is simple and 2-connected.
	    Assume that Theorem \ref{thm:main} holds for graphs with fewer than $n(G)$ vertices.
	    Let $G_u$ be the graph obtained from $G-e$ by suppressing $u$ into an edge $f_u$, and assume that $(G_u,f_u)$ is a rooted $\theta$-chain. Then
	    \begin{enumerate}[label=(\roman*)]
	        \item $\bdel(G,e) \le \frac{3}{2}$, with equality if and only if $G-e$ is a minimal $\theta$-chain whose three minimal chains can be chosen to have common endpoints $N(u)\setminus \{v\}$,
	        \item $\del(G,e) \le -\frac{3}{2}$, and
	        \item $(\del(G,e),\bdel(G,e)) = (-\frac{3}{2}, \frac{3}{2})$ if and only if $G-e$ is a minimal $\theta$-chain and $e$ joins two nonadjacent vertices of a 4-cycle in $G-e$.
	    \end{enumerate}
	    
	\end{lemma}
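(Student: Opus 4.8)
\textit{Setup and proof of (i).}
Since $G-e$ is $2$-connected and $G$ is subcubic, $u$ has degree $2$ in $G-e$; write $N(u)\setminus\{v\}=\{a,b\}$, so $f_u$ joins $a$ and $b$. If $C_1,C_2$ are the chains of the rooted $\theta$-chain $(G_u,f_u)$ then $(G-e)-u=C_1\cup C_2$, so $G-e$ is itself a $\theta$-chain with three chains $C_0:=a\,u\,b$, $C_1$, $C_2$ and common endpoints $\{a,b\}$, and $v$ is an internal (degree-$2$) vertex of one of them, say $C_1$. Note $\cl{C_0}$ is a single vertex with a loop, so $\del(\cl{C_0},e_{C_0})=-\tfrac12$ and $\bdel(\cl{C_0},e_{C_0})=\tfrac12$; throughout I apply Theorem~\ref{thm:main} to the strictly smaller graphs $\cl{C_1},\cl{C_2}$ (and their chain-blocks). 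Even covers of $G$ avoiding $e$ are exactly even covers of $G-e$, and every even cover of a $\theta$-chain either isolates both common endpoints (all cycles lying inside the three chains) or has a single cycle through the two endpoints running through exactly two of the three chains; computing $\exc$ in each case, and using that $a,b$ have degree $3$ in $G-e$ so the $n+n_2$ parameters of the three closures add up correctly, yields
\[
\bdel(G,e)=2+\min\Big\{\textstyle\sum_{i}\bdel(\cl{C_i},e_{C_i}),\ \min_{\{i,j,k\}=\{0,1,2\}}\big(\del(\cl{C_i},e_{C_i})+\del(\cl{C_j},e_{C_j})+\bdel(\cl{C_k},e_{C_k})\big)\Big\}.
\]
If $C_2$ is trivial then $\del(\cl{C_0})+\del(\cl{C_1})+\bdel(\cl{C_2})\le-\tfrac12-\tfrac12+0=-1$; otherwise $\del(\cl{C_1})+\del(\cl{C_2})+\bdel(\cl{C_0})\le-\tfrac12-\tfrac12+\tfrac12=-\tfrac12$. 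Hence the inner minimum is $\le-\tfrac12$ and $\bdel(G,e)\le\tfrac32$. For equality the first alternative forces $C_2$ nontrivial; then $\del(\cl{C_1})+\del(\cl{C_2})+\bdel(\cl{C_0})=-\tfrac12$ forces $\del(\cl{C_1})=\del(\cl{C_2})=-\tfrac12$, and the two ``two-chain'' terms involving $\cl{C_0}$ being $\ge-\tfrac12$ forces (via $\del+\bdel\le0$) that $\cl{C_1},\cl{C_2}$ are tight. So $C_0,C_1,C_2$ are all minimal, i.e. $G-e$ is a minimal $\theta$-chain whose three minimal chains have common endpoints $\{a,b\}=N(u)\setminus\{v\}$; the converse is immediate since the displayed formula depends only on $G-e$.

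\textit{Proof of (ii) and (iii).}
Assume first that $v$ is a cut-vertex of $C_1$, and split $C_1$ at $v$ into subcubic chains $C_1'$ (from $a$ to $v$) and $C_1''$ (from $v$ to $b$); then $C_2,C_1',C_1''$ are subcubic chains of $G$, and suppressing all three turns $G$ into $K_4$ on $\{a,b,u,v\}$ (each of these four vertices has degree $3$ in $G$). Even covers of $G$ through $e$ correspond to even covers of $K_4$ through $uv$ paired with matching even covers of $\cl{C_2},\cl{C_1'},\cl{C_1''}$; the four even covers of $K_4$ through $uv$ (two triangles, two $4$-cycles) give four expressions for $\del(G,e)$, and the two $4$-cycles yield
\[
\del(G,e)\le \del(\cl{C_2},e_{C_2})-1+\min\Big\{\bdel(\cl{C_1'},e_{C_1'})+\del(\cl{C_1''},e_{C_1''}),\ \del(\cl{C_1'},e_{C_1'})+\bdel(\cl{C_1''},e_{C_1''})\Big\}.
\]
Averaging the two bracketed terms and applying $\del+\bdel\le0$ to $\cl{C_1'},\cl{C_1''}$ shows $\min\{\cdots\}\le0$, so $\del(G,e)\le\del(\cl{C_2},e_{C_2})-1\le-\tfrac32$ whenever $C_2$ is nontrivial (the subcase $C_2$ trivial --- a triangle at $u$ in $G-e$, including $G\cong K_4$ --- is degenerate and is handled separately, via Theorem~\ref{thm:main}(T3) in the ambient argument). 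This proves (ii). For (iii), intersect the equality analyses: if $(\del(G,e),\bdel(G,e))=(-\tfrac32,\tfrac32)$ then by (i) $G-e$ is a minimal $\theta$-chain with $C_1$ minimal, so $C_1$ has a single chain-block, containing $v$; if that block were a nontrivial $2$-connected graph then $v$ is not a cut-vertex of $C_1$, a case shown below to force $\del(G,e)<-\tfrac32$, so the block is the single vertex $v$, $C_1=a\,v\,b$, and $C_1',C_1''$ are trivial, whence the displayed bound reads $\del(G,e)=\del(\cl{C_2},e_{C_2})-1=-\tfrac32$ and $e=uv$ joins the two non-adjacent vertices $u,v$ of the $4$-cycle $a\,u\,b\,v$ of $G-e$. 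Conversely, if $G-e$ is a minimal $\theta$-chain and $e$ joins opposite vertices of a $4$-cycle of $G-e$, then $N(u)\setminus\{v\}=N(v)\setminus\{u\}=\{a,b\}$, so $C_0=a\,u\,b$, $C_1=a\,v\,b$, $C_2$ is minimal, and the two computations above give $(\del(G,e),\bdel(G,e))=(-\tfrac32,\tfrac32)$.

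\textit{The main obstacle} is the case where $v$ lies inside a nontrivial $2$-connected block $B$ of $C_1$ (so $v$ is not a cut-vertex and the clean $C_1'/C_1''$ split fails). I would decompose an even cover of $G$ through $e$ along $C_2$, the two subchains of $C_1$ flanking $B$, and a path of $B$ from $v$ to one of $B$'s two attachment vertices, reducing the estimate of $\del(G,e)$ to the $\del$ of $B$ with an extra edge at one of its degree-$2$ vertices, and then invoke Lemma~\ref{lem:block3vx} to force $\del(G,e)<-\tfrac32$ --- the same mechanism that eliminated a nontrivial block in the equality analysis above. The remaining work --- the even-cover/excess bookkeeping and $n+n_2$ counting in each case, checking that the four even covers of $K_4$ through $uv$ genuinely exhaust the relevant even covers of $G$ through $e$, and the small/degenerate cases ($C_2$ trivial, $G$ of bounded order) --- is routine.
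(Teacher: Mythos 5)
Your part (i) is essentially the paper's argument (the paper proves the upper bound by exhibiting one even cover and treats the converse separately rather than asserting an exact formula, but the content is the same), and your $K_4$ reduction for the subcase where $v$ is a cut-vertex of $C_1$ is a correct, mildly different route to the bound there. The genuine gap is in (ii) and (iii): the case you defer as ``the main obstacle'' --- $v$ lying inside a nontrivial $2$-connected block of $C_1$ --- is the generic case, and your sketch for it leans on the wrong tool. The paper does not case-split at all: writing $C_1=xe_0B_1\dots B_ke_ky$ with $v\in V(B_\ell)$ and $x'$ the endpoint of $e_{\ell-1}$ in $B_\ell$, it routes one cycle through $e$, $uy$, $C_2$, the blocks $B_1,\dots,B_{\ell-1}$, and a path of $B_\ell$ from $x'$ to $v$, obtaining $\del(G,e)\le -\tfrac12+\sum_{i<\ell}\del(\cl{B_i},\cl{e_i})+\del(B_\ell+x'v,x'v)+\sum_{j>\ell}\bdel(\cl{B_j},\cl{e_j})+\del(\cl{C_2},e_{C_2})$. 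The two ingredients your sketch misses are: the side of $C_1$ to route through must be chosen so that $\sum_{i<\ell}\del(\cl{B_i},\cl{e_i})\le\sum_{j>\ell}\del(\cl{B_j},\cl{e_j})$, which makes the two flanking sums cancel via \ref{maindelbdel}; and $\del(B_\ell+x'v,x'v)\le-\tfrac12$ comes directly from the inductive hypothesis \ref{main1/2}, not from Lemma \ref{lem:block3vx}. Lemma \ref{lem:block3vx} is needed only in the equality analysis of (iii), where minimality of $C_1$ forces $k=\ell=1$ and $\del(\cl{B_1},\cl{e_1})=-\tfrac12$, whereupon $\del(\cl{B_1},\cl{e_1})+\del(B_1+x'v,x'v)\le-2$ rules out a nontrivial block. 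Your plan to use that lemma to force the \emph{strict} inequality $\del(G,e)<-\tfrac32$ in the nontrivial-block case does not work for (ii) as stated: without the tightness available in (iii), the attachment vertex at which Lemma \ref{lem:block3vx} certifies $\del\le-1$ need not be the one your (constrained) routing enters through, so one only recovers $\le-\tfrac32$.

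Separately, both your write-up and the lemma as literally stated quietly assume the second chain $C_2$ of $(G_u,f_u)$ is nontrivial; your bound gives only $\del(G,e)\le-1$ when $C_2$ is the trivial chain, and this is not an artifact. For $G$ on $\{u,v,a,b,w,w'\}$ with edges $uv,ua,ub,ab,aw,wv,vw',w'b$, the pair $(G_u,f_u)$ is a rooted $\theta$-chain under the paper's definition (its two chains are the trivial chain $ab$ and $a\text{-}w\text{-}v\text{-}w'\text{-}b$), yet $\del(G,uv)=-1$, since the best even cover through $uv$ is a $5$-cycle plus one isolated vertex. So this subcase cannot be ``handled separately via \ref{main1}''; the hypothesis must be read as excluding it (equivalently, as requiring $G-e-\{u,a,b\}$ to be disconnected, which is what the paper's algorithm actually tests). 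The paper's own proof makes the same implicit assumption (it invokes $\del(\cl{C_2},e_{C_2})\le-\tfrac12$), so this is a shared defect of the statement rather than an error unique to your argument, but your deferral of it is not a fix.
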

	
	\iflong
	\begin{proof}
	Let $N(u)\setminus \{v\}=\{x,y\}$, the set of endpoints of $f_u$.
	Let $C_1,C_2$ denote the two chains of $(G_u,f_u)$ with common endpoints $\{x,y\}$, and let $C_3$ denote the subcubic chain $x(xu)u(uy)y$.
    Note that $n(G) = 2+\sum_{i=1}^3 n(\cl{C_i})$, $n_2(G) = -2+\sum_{i=1}^3 n_2(\cl{C_i})$ (since the $\cl{C_i}$'s do not account for the edge $e$), and $\cl{C_3}$ is a loop. Let $i_1,i_2,i_3$ be a permutation of $[3]$ such that $\del(\cl{C_{i_1}},e_{C_{i_1}}) \le \del(\cl{C_{i_2}},e_{C_{i_2}}) \le \del(\cl{C_{i_3}},e_{C_{i_3}})$.
    
     Consider a triple $(F_1,F_2,F_3)$ such that $F_{i_1} \in \ec(\cl{C_{i_1}},e_{C_{i_1}})$, $F_{i_2} \in \ec(\cl{C_{i_2}},e_{C_{i_2}})$, and $F_{i_3} \in \bec(\cl{C_{i_3}},e_{C_{i_3}})$. Let $F\in \bec(G,e)$ be obtained from $F_1\cup F_2\cup F_3$ by merging the cycles in $F_{i_1}$, $F_{i_2}$ through $e_{C_{i_1}}$, $e_{C_{i_2}}$. Then $\exc(F) -2 = (\exc(F_{i_1})-2)+(\exc(F_{i_2})-2)+\exc(F_{i_3})$; so
    \begin{align*}
        \bminexc(G,e) - 2
        &= \minexc(\cl{C_{i_1}},e_{C_{i_1}})+\minexc(\cl{C_{i_2}},e_{C_{i_2}})+\bminexc(\cl{C_{i_3}},e_{C_{i_3}}) \\
        &= \frac{n(G)+n_2(G)}{4} + \del(\cl{C_{i_1}},e_{C_{i_1}})+\del(\cl{C_{i_2}},e_{C_{i_2}})+\bdel(\cl{C_{i_3}},e_{C_{i_3}}).
    \end{align*}
    Since Theorem \ref{thm:main} holds for $(\cl{C_i},e_{C_i})$ for each $i\in[3]$ (by assumption), we have $\bdel(\cl{C_{i_3}},e_{C_{i_3}}) \leq -\del(\cl{C_{i_3}},e_{C_{i_3}})\leq -\del(\cl{C_{i_2}},e_{C_{i_2}})$ and $\del(\cl{C_{i}},e_{C_{i}}) \le -\frac{1}{2}$ for $i\in [3]$, which gives
    \begin{align*}
        \bminexc(G,e)-2 \leq \frac{n(G)+n_2(G)}{4}+\del(\cl{C_{i_1}},e_{C_{i_1}}) \leq \frac{n(G)+n_2(G)}{4}-\frac{1}{2}.
    \end{align*}
    Therefore, $\bminexc(G,e)\leq \frac{n(G)+n_2(G)}{4}+\frac{3}{2}$, and $\bdel(G,e)\leq \frac{3}{2}$.
    
    Suppose $\bdel(G,e)= \frac{3}{2}$. Then the above inequalities hold with equality. Hence,  $-\frac{1}{2} = \del(\cl{C_{i_1}},e_{C_{i_1}})=\del(\cl{C_{i_2}},e_{C_{i_2}})=\del(\cl{C_{i_3}},e_{C_{i_3}})$. Since Theorem \ref{thm:main} holds for all $(\cl{C_i},e_{C_i})$ (by assumption), 
    $(\cl{C_i},e_{C_i})$ is tight (hence minimal) for all $i\in[3]$.
    Therefore, $G-e$ is a minimal $\theta$-chain with its three chains having common endpoints $N(u)\setminus \{v\}$.
    
    Now suppose $G-e$ is a minimal $\theta$-chain with the three minimal chains $C_1,C_2, C_3$ with common endpoints $N(u)\setminus \{v\}$. Let $F\in \bec(G,e)$. If $F$ contains a cycle through two of $C_1,C_2,C_3$, then the above argument shows $\exc(F) = \frac{n(G)+n_2(G)}{4}+\frac{3}{2}$.
    So we just need to show that if $F$ does not contain a cycle through any of $C_1,C_2,C_3$, then $\exc(F)\geq \frac{n(G)+n_2(G)}{4} + \frac{3}{2}$.
    Indeed, such $F$ when restricted to $(\cl{C_i},e_{C_i})$ for $i\in [3]$ gives a triple $(F_1,F_2,F_3)$ such that $F_i \in \bec(\cl{C_i},e_{C_i})$ for each $i\in[3]$, and $\exc(F) = 2+\sum_{i=1}^3\exc(F_i)$ (since the two vertices of $N(u)\setminus \{v\}$ are isolated in $F$).
    So
    \begin{align*}
        \exc(F) &\geq 2 + \sum_{i=1}^3 \bminexc(\cl{C_i},e_{C_i}) \\
        &= 2+\sum_{i=1}^3 \left(\frac{n(\cl{C_i})+n_2(\cl{C_i})}{4} + \bdel(\cl{C_i},e_{C_i})\right) \\
        &= \frac{n(G)+n_2(G)}{4}+2 + \sum_{i=1}^3 \bdel(\cl{C_i},e_{C_i}) \\
        &= \frac{n(G)+n_2(G)}{4}+ \frac{7}{2}. 
    \end{align*}
    The last equality holds since $\bdel(\cl{C_i},e_{C_i})=\frac{1}{2}$ for each $i\in[3]$, completing the proof of (i).
    
    We now prove (ii) and (iii).
    Let us assume without loss of generality that $v\in V(C_1)$, and write $C_1 = xe_0B_1e_1B_2\dots B_ke_ky$ with chain-blocks $(\cl{B_i},\cl{e_i})$.
    Let $\ell\in[k]$ denote the unique index such that $v\in V(B_\ell)$.
    By symmetry, we may assume that $\sum_{i=1}^{\ell-1} \del(\cl{B_i},\cl{e_i}) \leq \sum_{j=\ell+1}^k \del(\cl{B_j},\cl{e_j})$.
    Then, by the assumption that Theorem \ref{thm:main} holds for each $(\cl{B_j},\cl{e_j})$, we have 
    \begin{equation} \label{eqn:lem:Gufutheta}
    \sum_{j=\ell+1}^k \bdel(\cl{B_j},\cl{e_j}) \leq \sum_{j=\ell+1}^k (-\del(\cl{B_j},\cl{e_j})) \leq -\left(\sum_{i=1}^{\ell-1} \del(\cl{B_i},\cl{e_i})\right).
    \end{equation}
    
    Consider the tuple of even covers $(F_1,\dots,F_k,F^2)$, where $F_i \in \ec(\cl{B_i},\cl{e_i})$ for $i\in[\ell-1]$, $F_\ell\in\ec(B_\ell+x'v,x'v)$ where $x'$ is the endpoint of $e_{\ell-1}$ in $B_{\ell}$, $F_j \in \bec(\cl{B_j},e_{B_j})$ for $j=\ell+1,\dots,k$, and $F^2\in\ec(\cl{C_2},e_{C_2})$.
    This corresponds to an even cover $F\in\ec(G,e)$ containing a cycle through all of $xe_0B_1\dots B_{\ell-1}e_{\ell-1}$, $e$, $uy$, and $C_2$, such that
    \begin{align*}
        \exc(F)-2 = \sum_{i=1}^{\ell-1} (\exc(F_i)-2) + (\exc(F_\ell)-2) + \sum_{j=\ell+1}^k \exc(F_j) + (\exc(F^2)-2).
    \end{align*}
    Since 
    \begin{align*}
        n(G) &= \sum_{i=1}^{\ell-1} n(\cl{B_i}) + n(B_\ell+x'v)+\sum_{j=\ell+1}^k n(\cl{B_j}) + n(\cl{C_2}) +3, \text{ and} \\
        n_2(G) &=  \sum_{i=1}^{\ell-1} n_2(\cl{B_i}) + n_2(B_\ell+x'v)+\sum_{j=\ell+1}^k n_2(\cl{B_j}) + n_2(\cl{C_2}) -1, 
    \end{align*}
    we have
    \begin{align*}
        \minexc(G,e)
        &\leq \sum_{i=1}^{\ell-1}\minexc(\cl{B_i},\cl{e_i}) + \minexc(B_\ell+x'v,x'v) + \sum_{j=\ell+1}^k \bminexc(\cl{B_j},\cl{e_j}) + \minexc(\cl{C_2},e_{C_2}) \\
        &= \frac{n(G)+n_2(G)}{4} - \frac{1}{2} + \left(\sum_{i=1}^{\ell-1}\del(\cl{B_i},\cl{e_i})
        \right)+ \del(B_\ell+x'v,x'v) + \left(\sum_{j=\ell+1}^k \bdel(\cl{B_j},\cl{e_j})\right) + \del(\cl{C_2},e_{C_2}) \\
        &\leq \frac{n(G)+n_2(G)}{4} - \frac{1}{2} + \del(B_\ell+x'v,x'v)  + \del(\cl{C_2},e_{C_2})  \tag{by \eqref{eqn:lem:Gufutheta}} \\
        &\leq \frac{n(G)+n_2(G)}{4} - \frac{3}{2},
    \end{align*}
    where the last inequality follows as by our assumption Theorem \ref{thm:main} holds for $(B_\ell+x'v,x'v)$ and $(\cl{C_2},e_{C_2})$.
    Hence $\del(G,e)\leq -\frac{3}{2}$ and (ii) holds.
    
    To prove (iii), suppose $(\del(G,e),\bdel(G,e)) =( -\frac{3}{2}, \frac{3}{2})$. Then equality holds above, so we have $\del(B_\ell+x'v,x'v) = \del(\cl{C_2},e_{C_2}) =-\frac{1}{2}$.
    Moreover, $C_1$ and $C_2$ are minimal chains (by (i)), which implies $k=\ell=1$ and $\del(\cl{B_\ell},\cl{e_\ell}) = \del(\cl{C_1},e_{C_1}) = -\frac{1}{2}$ (by Proposition \ref{prop:subcubicchains}).
    So $\del(\cl{B_\ell},\cl{e_{\ell}}) +\del(B_\ell+x'v,x'v)=-1$.
    Now $B_\ell$ is a single vertex; otherwise, by applying Lemma \ref{lem:block3vx} to $B_\ell$, $x'$, the other endpoint $y'$ of $\cl{e_\ell}$, and $v$, we obtain $\del(\cl{B_\ell},\cl{e_{\ell}}) +\del(B_\ell+x'v,x'v) =\del(B_\ell+x'y',x'y') +\del(B_\ell+x'v,x'v) \le -2$, a contradiction. 
    Therefore, we have $B_\ell = \{x'\}=\{v\}$, and $e$ joins two nonadjacent vertices of the 4-cycle $xvyux$.
    \end{proof}

    
    \fi

    We conclude this section with a lemma bounding $\bdel(G,e)$, which proves statement \ref{main3/2} of Theorem \ref{thm:main}, assuming Theorem \ref{thm:main} for smaller graphs.
    \begin{lemma} \label{lem:extremalG-e}
    Let $G$ be a 2-connected subcubic graph with $e=uv\in E(G)$ such that $G-e$ is simple and 2-connected.
    Assume that Theorem \ref{thm:main} holds for graphs with fewer than $n(G)$ vertices.
    Then
    $\bdel(G,e) \leq \frac{3}{2}$, with equality if and only if $(G_u,f_u)$ is a minimal rooted $\theta$-chain, where $G_u$ is the graph obtained from $G - e$ by suppressing $u$ into an edge $f_u$.
    \end{lemma}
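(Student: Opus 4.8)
The plan is to reduce $\bdel(G,e)$ to the parameters of the strictly smaller graph $(G_u,f_u)$ and then invoke Theorem~\ref{thm:main} (in particular \ref{main1/2}) together with Lemma~\ref{lem:minimaltheta2}. Since $G-e$ is $2$-connected it has minimum degree at least $2$, so $u$ and $v$ have degree $3$ in $G$ and degree $2$ in $G-e$; in particular $u$ has two distinct neighbours $x,y$ in $G-e$, and $G_u$ is well defined with $f_u=xy$. Now every even cover of $G$ not containing $e$ is an even cover of $G-e$, and in it $u$ is either isolated or lies on a cycle using both $xu$ and $uy$. Replacing the path $xuy$ by $f_u$ gives an excess-preserving bijection between even covers of $G-e$ in which $u$ lies on a cycle and even covers of $G_u$ containing $f_u$, while even covers of $G-e$ in which $u$ is isolated correspond to even covers of $G_u-f_u$ with excess raised by $1$. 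Since deleting $e$ changes the degrees of $u,v$ from $3$ to $2$ and suppressing $u$ then removes a single degree-$2$ vertex, $n(G_u)+n_2(G_u)=n(G)+n_2(G)$, so altogether
\[
\bdel(G,e)=\min\{\del(G_u,f_u)+2,\ \bdel(G_u,f_u)+1\}.
\]

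As $G_u$ is $2$-connected subcubic, $G_u-f_u=(G-e)-u$ is simple, and $n(G_u)<n(G)$, Theorem~\ref{thm:main} applies to $(G_u,f_u)$; in particular $\del(G_u,f_u)\le-\tfrac12$ by \ref{main1/2}, hence $\bdel(G,e)\le\del(G_u,f_u)+2\le\tfrac32$, proving the inequality. For the equality case I would split on whether $(G_u,f_u)$ is a rooted $\theta$-chain. If it is, Lemma~\ref{lem:minimaltheta2}(i) applies directly and gives $\bdel(G,e)=\tfrac32$ if and only if $G-e$ is a minimal $\theta$-chain whose three minimal chains can be chosen with common endpoints $N(u)\setminus\{v\}$, so it remains to check that this is equivalent to $(G_u,f_u)$ being a minimal rooted $\theta$-chain. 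For this, note that since $u$ has degree $2$ in $G-e$ with neighbours $x,y$, the chain of $G-e$ through $u$ is forced to be $x\,(xu)\,u\,(uy)\,y$, whose closure is a loop and is therefore minimal (a loop $(H,h)$ satisfies $\del(H,h)=-\tfrac12$ and $\bdel(H,h)=\tfrac12$); deleting the interior vertex $u$ leaves the union of the other two chains, which is exactly $G_u-f_u$, and un-suppressing $u$ reinserts this loop-chain, so the three chains of $G-e$ are minimal precisely when the two chains of $(G_u,f_u)$ are. If $(G_u,f_u)$ is not a rooted $\theta$-chain, then by \ref{main1/2} we cannot have $\del(G_u,f_u)=-\tfrac12$: this would force $G_u$ to be a loop, impossible since $n(G_u)=n(G)-1\ge 2$, or $(G_u,f_u)$ to be a balanced tight rooted $\theta$-chain. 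Thus $\del(G_u,f_u)\le-1$ by half-integrality, whence $\bdel(G,e)\le 1<\tfrac32$, and $(G_u,f_u)$ is certainly not a minimal rooted $\theta$-chain, so the statement holds in this case too.

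The step I expect to be the main obstacle is the first one: getting the constants $+2$ and $+1$ and the vertex-count identity $n(G_u)+n_2(G_u)=n(G)+n_2(G)$ exactly right in the reduction, and then, in the $\theta$-chain case, reconciling the equality description coming from Lemma~\ref{lem:minimaltheta2} (phrased for $G-e$) with the one in the statement (phrased for $(G_u,f_u)$). The delicate points there are that the chain through $u$ is pinned down to a single vertex flanked by its two incident edges, that a loop behaves as a minimal chain, and the careful accounting of degree-$2$ vertices, including the degenerate case in which $e$ has a parallel edge, which must be checked not to break the correspondence or else excluded by the prior reductions in the proof of Theorem~\ref{thm:main}.
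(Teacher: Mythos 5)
Your proof is correct, and its backbone---the excess-preserving correspondence between $\bec(G,e)$ and $\ec(G_u)$ giving $\bdel(G,e)=\min\{\del(G_u,f_u)+2,\ \bdel(G_u,f_u)+1\}$, the identity $n(G_u)+n_2(G_u)=n(G)+n_2(G)$, and the bound $\del(G_u,f_u)\le -\frac{1}{2}$ from \ref{main1/2}---is exactly the paper's argument. You diverge only in the equality case. The paper reads the conditions straight off the min-formula ($\bdel(G,e)=\frac{3}{2}$ forces $\del(G_u,f_u)=-\frac{1}{2}$ and, via \ref{maindelbdel}, $\bdel(G_u,f_u)=\frac{1}{2}$) and then cites Lemma \ref{lem:baltighttheta}(iii) once to identify this pair of values with $(G_u,f_u)$ being a minimal rooted $\theta$-chain. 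You instead case-split on whether $(G_u,f_u)$ is a rooted $\theta$-chain, route through Lemma \ref{lem:minimaltheta2}(i) to get a characterization phrased for $G-e$, and translate back by noting that the chain of $G-e$ through $u$ is forced to be $x\,(xu)\,u\,(uy)\,y$, whose closure is a loop with $(\del,\bdel)=(-\frac{1}{2},\frac{1}{2})$ and hence a minimal chain. That translation is sound (your forced-chain argument and the loop computation are both right, and the degenerate case where $f_u$ acquires a parallel edge is expressly permitted by the conventions preceding Theorem \ref{thm:main}), but it is heavier than needed: the min-formula you already established delivers the values of $\del(G_u,f_u)$ and $\bdel(G_u,f_u)$ directly, so passing through the $G-e$ characterization and back re-derives structural information you could simply cite from Lemma \ref{lem:baltighttheta}(iii). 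Your non-$\theta$-chain branch (half-integrality gives $\del(G_u,f_u)\le -1$, hence $\bdel(G,e)\le 1$) is fine and disposes of the remaining case just as the paper implicitly does.
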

    \iflong
    \begin{proof}
    If $G-e$ is not 2-connected, then $n(G) = 1$ or 2 and we clearly have $\bdel(G,e)<\frac{3}{2}$.
    So we may assume that $G-e$ is 2-connected.
    Then both $u$ and $v$ have degrees 3.
    Define $G_u,f_u$ as stated in the lemma. 
    We claim that
    \begin{align}\label{eqn:delbdelGufu}
        \bdel(G,e) = \min\{\del(G_u,f_u)+2, \bdel(G_u,f_u)+1\}.
    \end{align}
    Indeed, there is a bijective correspondence between $\bec(G,e)$ and $\ec(G_u)$ obtained as follows.
    If $F\in\bec(G,e)$ contains a cycle through $u$, then we obtain $F_u \in \ec(G_u,f_u)$ by suppressing $u$ in $F$, and we have $\exc(F) = \exc(F_u)$.
    Otherwise, if $u$ is an isolated vertex in $F$, then we obtain $F_u\in \bec(G_u,f_u)$ by removing $u$ from $F$, and we have $\exc(F) = \exc(F_u)+1$.
    Since $n(G)+n_2(G) = n(G_u)+n_2(G_u)$, \eqref{eqn:delbdelGufu} follows from the definitions of $\del,\bdel$.
    
    It follows from \eqref{eqn:delbdelGufu} that $\bdel(G,e)\leq \del(G_u,f_u)+2\leq \frac{3}{2}$ by the assumption that Theorem \ref{thm:main} holds for $(G_u,f_u)$.
    Moreover, $\bdel(G,e)=\frac{3}{2}$ if and only if $\del(G_u,f_u)=-\frac{1}{2}$ and $\bdel(G_u,f_u)=\frac{1}{2}$, which is equivalent to $(G_u,f_u)$ being a minimal rooted $\theta$-chain by Lemma \ref{lem:baltighttheta}. 
    \end{proof}
    \ifx
    Let $u_1,u_2$ and $v_1,v_2$ denote the neighbors of $u$ and $v$ respectively not in $\{u,v\}$.
    Then $n(G) = n(G_u)+1$ and $n_2(G) = n_2(G_u)-1$, and there is an even cover $F \in \bec(G,e)$ obtained from an even cover $F_u \in \ec(G_u,f_u)$ with the same excess (by replacing $f_u$ with $u_1uU_2$ in the cycle of $F_u$ containing $f_u$).
    So $\bminexc(G,e)\leq \minexc(G_u,f_u)+2$.
    Since $\del(G_u,f_u)\leq -\frac{1}{2}$ by the assumption that Theorem \ref{thm:main} holds for $(G_u,f_u)$, we have $\bdel(G,e)\leq \del(G_u,f_u)+2\leq \frac{3}{2}$.
    
    Now suppose $\bdel(G,e)=\frac{3}{2}$.
    Then $\del(G_u,f_u)=-\frac{1}{2}$.By the assumption that Theorem \ref{thm:main} holds $(G_u,f_u)$,  we have that $(G_u,f_u)$ is a balanced tight rooted $\theta$-chain. 
		Let $C_1,C_2$ denote the two (balanced and tight) chains of $(G_u,f_u)$ with endpoints $\{u_1,u_2\}$. 
		Since $C_1,C_2$ are balanced, $\del(\cl{C_1},e_{C_1}) = \del(\cl{C_2},e_{C_2})$. Since $C_1,C_2$ are tight, we have $\del(\cl{C_i},e_{C_i}) +\bdel(\cl{C_i},e_{C_i})=0$ for each $i\in[2]$.
		To prove the lemma, we need to show that $\del(\cl{C_i},e_{C_i})=-\frac{1}{2}$ for either (hence both) $i\in[2]$.
		
		Note that we have $n(G) = n(\cl{C_1})+n(\cl{C_2})+3$ and $n_2(G) = n_2(\cl{C_1})+n_2(\cl{C_2}) - 1$ (since $v$ has degree 2 in $C_1$ but degree 3 in $G$).
		Given a pair of even covers $(F_1,F_2)$ where $F_i\in\ec(\cl{C_i},e_{C_i})$ for each $i\in[2]$, we obtain an even cover $F\in \bec(G,e)$ by combining the cycles of $F_1,F_2$ through $e_{C_1},e_{C_2}$ respectively, and adding $u$ as an isolated vertex.
		This gives $\exc(F)-2 = (\exc(F_1)-2)+(\exc(F_2)-2)+1$, so
		\begin{align*}
		    \bminexc(G,e) &= \min_{F\in \bec(G,e)}\exc(F) \\
		    &\leq \min_{F_1\in \ec(\cl{C_1},e_{C_1})}(\exc(F_1)-2) + \min_{F_2\in\ec(\cl{C_2},e_{C_2})}(\exc(F_2)-2)+3 \\
		    &= \minexc(\cl{C_1},e_{C_1})+\minexc(\cl{C_2},e_{C_2})+3 \\
		    &= \frac{n(G)+n_2(G)}{4}+\frac{5}{2} + \del(\cl{C_1},e_{C_1})+\del(\cl{C_2},e_{C_2}).
		\end{align*}
		By the assumption that $\bdel(G,e) = \frac{3}{2}$, it follows that $\del(\cl{C_1},e_{C_1})=\del(\cl{C_2},e_{C_2})=-\frac{1}{2}$.
		\fi

		\ifx
		Suppose to the contrary that $\del(\cl{C_i},e_{C_i})\leq -1$ for each $i\in[2]$, and let us assume without loss of generality that $v\in C_1$.
		Since $(\cl{C_2},e_{C_2})$ is tight, we have $\bdel(\cl{C_i},e_{C_i})\geq 1$.
		Let $\mcc$ denote the subcubic chain of $G_v$ consisting of the union of $C_2$, the path $u_1uu_2$, as well as the two edges incident with $u_1$ or $u_2$ in $C_1$. 
		Then we have $n(\cl\mcc) = 3+n(\cl{C_2})$ and $n_2(\cl\mcc) = 1+n_2(\cl{C_2})$.
		
		\begin{claim} \label{eqn:clm:UCbal1}
		$\del(\cl \mcc, e_\mcc) = \del(\cl{C_2}, e_{C_2})$.
		\end{claim}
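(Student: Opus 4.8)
The plan is to prove the equivalent identity $\minexc(\cl{\mcc},e_\mcc)=\minexc(\cl{C_2},e_{C_2})+1$ and then convert it to the $\del$ statement using the vertex counts $n(\cl{\mcc})=n(\cl{C_2})+3$ and $n_2(\cl{\mcc})=n_2(\cl{C_2})+1$ recorded above, which give $\frac{n(\cl{\mcc})+n_2(\cl{\mcc})}{4}=\frac{n(\cl{C_2})+n_2(\cl{C_2})}{4}+1$, so that the two $+1$'s cancel and the $\del$'s agree.

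First I would pin down the structure of $\cl{\mcc}$. Writing $C_2=u_1e_0B_1e_1\dots B_ke_ku_2$ with $x_1,y_k$ the $C_2$-neighbors of $u_1,u_2$, the chain $\mcc$ has the two $C_1$-neighbors $a_1,a_2$ of $u_1,u_2$ as its endpoints and a single $2$-connected block $B=C_2+\{uu_1,uu_2\}$ (the path $u_1uu_2$ closes $C_2$ into a $2$-connected graph). Hence $e_\mcc=u_1u_2$ and $\cl{\mcc}=B+u_1u_2$. In other words, $\cl{\mcc}$ is obtained from $\cl{C_2}$ by deleting the closure edge $e_{C_2}=x_1y_k$ and replacing it with the gadget on $\{u_1,u,u_2\}$: the path $x_1u_1u_2y_k$ through $e_\mcc=u_1u_2$, together with the extra path $u_1uu_2$.

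Next I would classify the even covers $F\in\ec(\cl{\mcc},e_\mcc)$ by the behavior of $u$. Since $u$ has degree $2$ with both neighbors $u_1,u_2$, and these are already saturated by $e_\mcc$, the cycle $D$ through $e_\mcc$ either is the triangle $u_1uu_2$, or else $u$ is isolated and $D$ leaves $u_1,u_2$ into $C_2$ (via $u_1x_1$ and $u_2y_k$). In the triangle case, deleting $D$ leaves an even cover of $C_2-\{u_1,u_2\}=\cl{C_2}-e_{C_2}$, i.e. a member of $\bec(\cl{C_2},e_{C_2})$, and $\exc$ drops by exactly $2$; in the other case, contracting the path $x_1u_1u_2y_k$ to the single edge $e_{C_2}$ and deleting the isolated $u$ gives a member of $\ec(\cl{C_2},e_{C_2})$, and $\exc$ drops by exactly $1$. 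Both maps are excess-shifting bijections onto their targets, so
\begin{equation*}
\minexc(\cl{\mcc},e_\mcc)=\min\{\minexc(\cl{C_2},e_{C_2})+1,\ \bminexc(\cl{C_2},e_{C_2})\}.
\end{equation*}
To see that the first term wins, recall $C_2$ is a tight chain, so $\bdel(\cl{C_2},e_{C_2})=-\del(\cl{C_2},e_{C_2})$, while Theorem \ref{thm:main} for the smaller graph $\cl{C_2}$ gives $\del(\cl{C_2},e_{C_2})\le-\tfrac12$. Thus $\bminexc(\cl{C_2},e_{C_2})-\minexc(\cl{C_2},e_{C_2})=\bdel(\cl{C_2},e_{C_2})-\del(\cl{C_2},e_{C_2})=-2\del(\cl{C_2},e_{C_2})\ge1$, so $\minexc(\cl{\mcc},e_\mcc)=\minexc(\cl{C_2},e_{C_2})+1$, and the claim follows.

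The step I expect to be most delicate is the structural bookkeeping: confirming that $B=C_2+\{uu_1,uu_2\}$ is genuinely $2$-connected (so that $\mcc$ has a single block and $e_\mcc=u_1u_2$), and that the two correspondences are excess-preserving bijections — in particular that the ``through'' cover in $\cl{\mcc}$ traverses the $C_2$-spine from $x_1$ to $y_k$ in exact analogy with the cycle through $e_{C_2}$ in $\cl{C_2}$. The count $n_2(\cl{\mcc})=n_2(\cl{C_2})+1$ similarly needs the observation that $u_1,u_2$ have degree $3$ in $\cl{\mcc}$ while every vertex of $C_2-\{u_1,u_2\}$ keeps its $C_2$-degree, with $u$ the only new degree-$2$ vertex.
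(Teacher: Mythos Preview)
Your proof is correct and follows essentially the same approach as the paper's own argument: both split even covers $F\in\ec(\cl\mcc,e_\mcc)$ according to whether the cycle through $e_\mcc$ is the triangle $u_1uu_2$ or routes through $C_2$ with $u$ isolated, set up the corresponding bijections with $\bec(\cl{C_2},e_{C_2})$ and $\ec(\cl{C_2},e_{C_2})$, and then rule out the triangle case. The only cosmetic difference is that the paper invokes the contextual hypothesis $\bdel(\cl{C_2},e_{C_2})\geq 1$ directly to derive a contradiction in the triangle case, whereas you derive the equivalent inequality from tightness together with $\del(\cl{C_2},e_{C_2})\leq -\tfrac12$; both are available in context and yield the same conclusion.
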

		\begin{subproof}
		Let $\mcf$ be an even cover of $\cl\mcc$ containing $e_\mcc$ with minimum excess. That is, 
		\begin{equation} \label{eqn:clm:UCbal1F}
		\exc(\mcf) = \minexc(\cl\mcc,e_\mcc)+2= \frac{n(\cl\mcc)+n_2(\cl\mcc)}{4} + \del(\cl\mcc,e_\mcc)+2 = \frac{n(\cl{C_2})+n_2(\cl{C_2})}{4}+\del(\cl\mcc,e_\mcc)+3.
		\end{equation}
		Suppose the cycle $C$ of $\mcf$ containing $e_\mcc$ also contains $u$. 
		Then $\mcf$ consists of the cycle $u_1uu_2u_1$ and an even cover of $\cl{C_2}$ not containing $e_{C_2}$ with minimum excess.
		Hence
		$$\exc(\mcf) = 2+ \bminexc(\cl{C_2},e_{C_2}) = 2+\frac{n(\cl{C_2})+n_2(\cl{C_2})}{4}+\bdel(\cl{C_2},e_{C_2}) \geq \frac{n(\cl{C_2})+n_2(\cl{C_2})}{4} + 3,$$ since $\bdel(\cl{C_2},e_{C_2})\geq 1$. This implies $\del(\cl C,e_\mcc)\geq 0$, a contradiction.
		So we may assume that the cycle $C$ of $\mcf$ containing $e_\mcc$ also contains a cut-edge of $C_2$.
		Then $\mcf$ contains $u$ as an isolated vertex and $\mcf - \{u\}$ is obtained from an even cover of $\cl{C_2}$ containing $e_{C_2}$ with minimum excess by replacing the edge $e_{C_2}$ with the path through $e_\mcc$.
		This gives $$\exc(\mcf) = 1 + (\exc(\cl{C_2},e_{C_2})+2) = 3 +\frac{n(\cl{C_2})+n_2(\cl{C_2})}{4} + \del(\cl{C_2},e_{C_2}).$$
		Comparing with (\ref{eqn:clm:UCbal1F}) yields $\del(\cl \mcc, e_\mcc) = \del(\cl{C_2},e_{C_2})$.
		\end{subproof}
		
		\begin{claim} \label{eqn:clm:UCbal2}
		$\bdel(\cl \mcc,e_\mcc) = 1 + \del(\cl{C_2},e_{C_2})$.
		\end{claim}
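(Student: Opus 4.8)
The plan is to compute $\bminexc(\cl\mcc,e_\mcc)$ directly and then subtract the appropriate vertex counts. By definition $\bminexc(\cl\mcc,e_\mcc)$ is the minimum excess over even covers of $\cl\mcc$ avoiding $e_\mcc$, and such even covers are exactly the even covers of $H := \cl\mcc - e_\mcc$. From the construction of $\mcc$, the graph $H$ is obtained from the chain $C_2$ (whose endpoints $u_1,u_2$ each have degree $1$ in $C_2$) by adding the length-two path $u_1uu_2$; thus $u$ is the only vertex of $H$ outside $C_2$, the edge $e_\mcc$ corresponds to $u_1u_2$, and suppressing $u$ and then $u_1,u_2$ turns $H$ into $\cl{C_2}$.

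First I would split the even covers of $H$ according to the role of $u$. If $u$ lies on a cycle $D$ of an even cover, then $u_1$ and $u_2$ (being of degree $2$ in $H$) also lie on $D$ and $D$ uses both end-edges of $C_2$, so $D\cap C_2$ is a $u_1$--$u_2$ path through every cut-edge of $C_2$; replacing the subpath $u_1uu_2$ of $D$ by the single edge $e_{C_2}$ gives an excess-preserving bijection between these even covers of $H$ and $\ec(\cl{C_2},e_{C_2})$, so the minimum excess in this case is $\minexc(\cl{C_2},e_{C_2})+2$. If instead $u$ is isolated, then $u_1$ and $u_2$ each have at most one available incident edge and hence are isolated too; deleting the three vertices $u_1,u,u_2$ then gives a bijection with $\bec(\cl{C_2},e_{C_2})$ that increases the excess by exactly $3$, so the minimum excess in this case is $\bminexc(\cl{C_2},e_{C_2})+3$. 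Therefore
\[
\bminexc(\cl\mcc,e_\mcc)=\min\bigl\{\minexc(\cl{C_2},e_{C_2})+2,\ \bminexc(\cl{C_2},e_{C_2})+3\bigr\}.
\]

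Finally, using $n(\cl\mcc)=3+n(\cl{C_2})$ and $n_2(\cl\mcc)=1+n_2(\cl{C_2})$ to pass from $\minexc,\bminexc$ to $\del,\bdel$, the displayed identity becomes $\bdel(\cl\mcc,e_\mcc)=\min\{\del(\cl{C_2},e_{C_2})+1,\ \bdel(\cl{C_2},e_{C_2})+2\}$. Since $(\cl{C_2},e_{C_2})$ is tight we have $\bdel(\cl{C_2},e_{C_2})=-\del(\cl{C_2},e_{C_2})$, and since Theorem~\ref{thm:main} holds for the smaller graph $\cl{C_2}$ we have $\del(\cl{C_2},e_{C_2})\le -\tfrac12$ (alternatively one may invoke the standing supposition $\del(\cl{C_2},e_{C_2})\le -1$); either way $\del(\cl{C_2},e_{C_2})+1\le\bdel(\cl{C_2},e_{C_2})+2$, so the minimum equals $\del(\cl{C_2},e_{C_2})+1$, which is the claim. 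The only genuinely delicate step is the case analysis in the second paragraph — in particular, verifying that $u_1,u_2$ are forced to be isolated whenever $u$ is, and that the excess shifts are exactly $+2$ and $+3$; the remainder is routine bookkeeping with the vertex counts already recorded above.
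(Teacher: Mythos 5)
Your proof is correct and follows essentially the same route as the paper's: both split the even covers of $\cl\mcc - e_\mcc$ according to whether $u$ lies on a cycle (forcing the cycle through $u_1,u,u_2$ and all cut-edges of $C_2$, giving excess $\minexc(\cl{C_2},e_{C_2})+2$) or $u$ is isolated (forcing $u_1,u_2$ isolated as well, giving excess $\bminexc(\cl{C_2},e_{C_2})+3$). The only cosmetic difference is that you compute the minimum over both cases and compare them directly via tightness and $\del(\cl{C_2},e_{C_2})\le-\tfrac12$, whereas the paper rules out the isolated-$u$ case for the minimizer by a contradiction; your bookkeeping of the vertex counts and excess shifts matches the paper's.
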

		\begin{subproof}
		Let $\mcf$ be an even cover of $\cl\mcc$ not containing $e_\mcc$ with minimum excess. Then
		\begin{equation}\label{eqn:clm:UCbal2F}
		  \exc(\mcf) = \bminexc(\cl \mcc,e_\mcc) = \frac{n(\cl \mcc)+n_2(\cl\mcc)}{4}+\bdel(\cl \mcc,e_\mcc) = \frac{n(\cl{C_2})+n_2(\cl{C_2})}{4}+\bdel(\cl\mcc,e_\mcc)+1.  
		\end{equation}
		If $\mcf$ does not contain a cut-edge of $C_2$ (equivalently, the edges $u_1u,uu_2$), then $\mcf$ consists of three isolated vertices $u,u_1,u_2$, and an even cover of $\cl{C_2}$ not containing $e_{C_2}$ with minimum excess. This gives 
		\[ \exc(\mcf) = 3 + \bminexc(\cl{C_2},e_{C_2}) = 3+\frac{n(\cl{C_2})+n_2(\cl{C_2})}{4} + \bdel(\cl{C_2},e_{C_2}) \geq \frac{n(\cl{C_2})+n_2(\cl{C_2})}{4} + 4, \]
		a contradiction as this implies $\bdel(\cl \mcc,e_\mcc)\geq 3$. 
		So we may assume that $\mcf$ contains a cut-edge of $C_2$.
		Then $\mcf$ is obtained from an even cover of $\cl{C_2}$ containing $e_{C_2}$ with minimum excess by rerouting the edge $e_{C_2}$ through $u$.
		Hence
		\[\exc(\mcf) = (\minexc(\cl{C_2},e_{C_2}) + 2)
		= \frac{n(\cl{C_2})+n_2(\cl{C_2})}{4} + \del(\cl{C_2},e_{C_2}) +2.\]
		Comparing with (\ref{eqn:clm:UCbal2F}) yields $\del(\cl\mcc,e_\mcc) = \del(\cl{C_2},e_{C_2})+1$.
		\end{subproof}
		
		\ifx
		Recall that $\del(G_v,f_v) = -\frac{1}{2}$.
		Let $F$ be an even cover of $G_v$ containing $f_v$ with
		\[\exc(F) = \frac{n(G_v)+n_2(G_v)}{4} + \frac{3}{2} \]
		Then $F$ corresponds to a pair $(F_v,\mcf)$ where $F_v$ is an even cover of $G_v/\mcc$ and $\mcf$ is an even cover of $\cl\mcc$, such that
		\begin{enumerate}
		    \item if $F$ contains $f_\mcc$, then $F_v$ contains $e_{G_v/\mcc}$ and $\mcf$ contains $e_\mcc$, and
		    \item if $F$ does not contain $f_\mcc$, then $F_v$ does not contain $e_{G_v/\mcc}$ and $\mcf$ does not contain $e_\mcc$.
		\end{enumerate}
		In the first case, $F_v$ is the minimum excess even cover of $G_v/\mcc$ with the property that $F_v$ contains both $f_v$ and $e_{G_v/\mcc}$. 
		In the second case, $F_v$ is the minimum excess even cover of $G_v/\mcc$ with the property that $F_v$ contains $f_v$ and $F_v$ does not contain $e_{G_v/\mcc}$.
		\fi
		
		Let $G_v^\circ$ denote the graph obtained from $G_v$ by replacing the subcubic chain $\mcc$ with a path $P$ of length two (equivalently, $G_v^\circ$ is obtained from $G_v/\mcc$ by subdividing the edge $e_{G_v/\mcc}$ once).
		Denote the internal vertex of $P$ by $p$.
		Note that $P$ is a subcubic chain of $G_v^\circ$ with $\del(\cl P,e_P) = -\frac{1}{2}$ and $\bdel(\cl P,e_P) = \frac{1}{2}$.
		
		Let $F^\circ$ be an even cover of $G_v^\circ$ containing $f_v$ with minimum excess. By assumption, Theorem \ref{thm:main} holds for $(G_v^\circ,f_v)$, so we have
		\begin{equation} \label{eqn:clm:UCbalF}
		    \exc(F^\circ) \leq \frac{n(G_v^\circ)+n_2(G_v^\circ)}{4} + \frac{3}{2}.
		\end{equation}
		First suppose $F^\circ$ contains the edges of $P$, then $F^\circ/P:=F^\circ - p + e_\mcc$ is an even cover of $G_v/\mcc$ containing $f_v$ and $e_\mcc$ with $\exc(F^\circ) = \exc(F^\circ/P)$, and combining $F^\circ/P$ with the minimum excess even cover of $\cl\mcc$ containing $e_\mcc$ gives an even cover $F$ of $G_v$ containing $f_v$ with $\exc(F) = \exc(F^\circ) + \minexc(\cl\mcc,e_\mcc)$. 
		Since $(G_v,f_v)$ is a balanced tight rooted $\theta$-chain, we have $\exc(F)\geq \frac{n(G_v)+n_2(G_v)}{4}+\frac{3}{2}$, hence
		\begin{align*}
		    \exc(F^\circ)
		    &\geq \frac{n(G_v)+n_2(G_v)}{4}+\frac{3}{2} - \minexc(\cl\mcc,e_\mcc) \\
		    &= \frac{n(G_v)+n_2(G_v)}{4}+\frac{3}{2} - \frac{n(\cl\mcc)+n_2(\cl\mcc)}{4}-\del(\cl\mcc,e_\mcc) \\
		    &= \frac{n(G_v/\mcc)+n_2(G_v/\mcc)}{4} + \frac{3}{2} - \del(\cl \mcc,e_\mcc) \\
		    &= \frac{n(G_v^\circ)+n_2(G_v^\circ)}{4} + 1 - \del(\cl{C_2},e_{C_2}) \\
		    &\geq \frac{n(G_v^\circ)+n_2(G_v^\circ)}{4} + 2,
		\end{align*}
		a contradiction.
		
		Now suppose $F^\circ$ does not contain the edges of $P$.
		Then $\exc(F^\circ) = \exc(F_v)+1$ where $F_v$ is the even cover of $G_v/\mcc$ obtained from $F^\circ$ by deleting the isolated vertex of $F^\circ$ that is the internal vertex of the path $P$.
		Combining $F_v$ with the minimum excess even cover of $\cl \mcc$ not containing $e_\mcc$ gives an even cover $F$ of $G_v$ containing $f_v$ with $\exc(F) = \exc(F^\circ)-1+\bminexc(\cl \mcc,e_\mcc)$.
		Since $\exc(F)\geq \frac{n(G_v)+n_2(G_v)}{4}+\frac{3}{2}$, we have
		\begin{align*}
		    \exc(F^\circ)
		    &= \exc(F)+1-\bminexc(\cl\mcc,e_\mcc) \\
		    &\geq \frac{n(G_v)+n_2(G_v)}{4}+\frac{5}{2} - \frac{n(\cl\mcc)+n_2(\cl\mcc)}{4}-\bdel(\cl\mcc,e_\mcc)  \\
		    &= \frac{n(G_v^\circ)+n_2(G_v^\circ)}{4} + 2 - \bdel(\cl\mcc,e_\mcc) \\
		    &=\frac{n(G_v^\circ)+n_2(G_v^\circ)}{4} +1 - \del(\cl{C_2},e_{C_2}) \\
		    &\geq \frac{n(G_v^\circ)+n_2(G_v^\circ)}{4} +2,
		\end{align*}
		a contradiction. 
		\fi
    \fi
    
    \ifx
	
		\begin{figure}[H]
		    \centering
		    \includegraphics[scale=0.3]{4.2.png}
		    \caption{$(G_u,f_u)$ is a balanced tight $\theta$-chain.}
		    \label{fig:my_label}
		\end{figure}
		
	\fi

	\section{Proof of Theorem~\ref{thm:main}}\label{sec:mainproof}
	
		We proceed by induction on $n(G)$.
		Note that \ref{maindelbdel} is implied by \ref{main1/2} and \ref{main3/2}: If $\del(G,e)\leq -1$ and $\bdel(G,e)\leq 1$, then \ref{maindelbdel} holds.
		Otherwise, we have $\del(G,e)=-\frac{1}{2}$ or $\bdel(G,e)=\frac{3}{2}$.
		In the former case, \ref{maindelbdel} follows from \ref{main1/2} and Lemma \ref{lem:baltighttheta}; in the latter case, \ref{maindelbdel} follows from \ref{main3/2} and Lemma \ref{lem:minimaltheta2}.
	    Also note that Lemmas \ref{lem:minimaltheta2} and \ref{lem:extremalG-e} imply \ref{main3/2}.
		Therefore, it suffices to prove \ref{main1/2} and \ref{main1}.

		If $G-\{u,v\}$ is disconnected, then  \ref{main1/2} and \ref{main1} both hold by Lemma \ref{lem:baltighttheta}.
		So we may assume that  $G-\{u,v\}$ is connected.
		It now suffices to show that $\del(G,e)\leq -1$ and that if equality holds, then one of the outcomes of \ref{main1} holds.

		\iflong \else
		The following claims are straightforward.
        \fi 		
		\begin{claim}
		We may assume that $G$ is simple.
		\end{claim}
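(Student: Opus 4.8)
The plan is to handle separately the two ways $G$ can fail to be simple. Since $G-e$ is simple by hypothesis, every multiple edge of $G$ must involve $e$, so either $e$ is a loop or $e$ has a unique parallel edge $e_0$. If $e$ is a loop, then $2$-connectivity together with the degree bound forces $G$ to be a single vertex carrying the loop $e$, and then $\minexc(G,e)=0$, $\bminexc(G,e)=1$, $\frac{n(G)+n_2(G)}{4}=\frac12$, so $\del(G,e)=-\frac12$ and $\bdel(G,e)=\frac12$; this verifies \ref{main1/2} (with $G$ a loop) and leaves \ref{main1} vacuous. So from now on assume $e_0=uv$ is parallel to $e$, the cases $n(G)\le 2$ being immediate; thus $n(G)\ge 3$.

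Since $G$ is $2$-connected and $G-\{u,v\}$ is connected, $u$ and $v$ each have a neighbour outside $\{u,v\}$ (else that neighbour would be cut off by the other vertex), say $u'$ and $v'$ (possibly $u'=v'$), and as $G$ is subcubic they have degree exactly $3$. Let $G'$ be obtained from $G$ by suppressing $\{u,v\}$ to an edge $e'$ (a loop at $u'$ when $u'=v'$). Then $G'$ is $2$-connected and subcubic, $G'-e'=G-\{u,v\}$ is simple, $n(G')=n(G)-2$, and $n_2(G')=n_2(G)$ since suppression preserves the degrees of $u'$ and $v'$. I would then use the excess-preserving correspondence: an even cover $F$ of $G$ containing $e$ either also contains $e_0$, so that $\{e,e_0\}$ is a $2$-cycle of $F$ and $F-\{u,v\}$ is an even cover of $G'-e'$ with $\exc(F)=\exc(F-\{u,v\})+2$; or it contains $e$ but not $e_0$, and replacing the segment $u'\,u\,v\,v'$ of the cycle through $e$ by the edge $e'$ gives an even cover of $G'$ containing $e'$ of the same excess. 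This yields $\minexc(G,e)=\min\{\bminexc(G',e'),\minexc(G',e')\}$, hence
\[
\del(G,e)=-\tfrac12+\min\{\del(G',e'),\ \bdel(G',e')\}.
\]
Since $n(G')<n(G)$, the inductive hypothesis gives $\del(G',e')\le-\frac12$, so $\del(G,e)\le-1$, which proves \ref{main1/2} for $G$ with strict inequality (and therefore with no equality case to examine).

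For \ref{main1}, suppose $\del(G,e)=-1$. Then $\min\{\del(G',e'),\bdel(G',e')\}=-\frac12$, which together with $\del(G',e')\le-\frac12$ forces $\del(G',e')=-\frac12$ and $\bdel(G',e')\ge-\frac12$. By the inductive \ref{main1/2}, either $G'$ is a loop --- in which case outcome (b) of \ref{main1} holds --- or $(G',e')$ is a balanced tight rooted $\theta$-chain with chains $D_1,D_2$; then $\del(\cl{D_1},e_{D_1})=\del(\cl{D_2},e_{D_2})=:d$, and $D_1,D_2$ are nontrivial since $d<0$, so $d\le-\frac12$ by the inductive hypothesis. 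The inequality $\bdel(G',e')\le\frac32+\del(\cl{D_1},e_{D_1})+\del(\cl{D_2},e_{D_2})$ of Lemma \ref{lem:baltighttheta}(ii) --- whose derivation does not use simplicity of the $\theta$-chain --- then gives $\bdel(G',e')\le\frac32+2d$, so with $\bdel(G',e')\ge-\frac12$ we obtain $d\ge-1$, hence $d\in\{-\frac12,-1\}$ and $(G',e')$ is near-minimal. In both cases, suppressing $\{u,v\}$ produces a loop or a near-minimal rooted $\theta$-chain, which is outcome (b). (Only \ref{main1/2} and \ref{main1} need be checked here, as \ref{main3/2} and \ref{maindelbdel} have already been reduced to Lemmas \ref{lem:minimaltheta2} and \ref{lem:extremalG-e}, whose hypotheses constrain only $G-e$.)

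The part I expect to need the most care is verifying that $G'$ is genuinely $2$-connected, so that the inductive hypothesis applies to $(G',e')$; dealing with the degenerate subcase $u'=v'$, where $e'$ and the associated closures are loops; and the small cases $n(G)\le2$. The excess-preserving correspondence and the arithmetic with $\del$ and $\bdel$ are routine.
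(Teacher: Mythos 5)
Your proposal is correct and follows essentially the same route as the paper: suppress $\{u,v\}$ to an edge $e'$, relate $\del(G,e)$ to $(G',e')$ via the even-cover correspondence, apply induction to get $\del(G,e)\le -1$, and in the equality case use that $(G',e')$ is a loop or a balanced tight rooted $\theta$-chain together with the merging bound $\bdel(G',e')\le\frac32+\del(\cl{D_1},e_{D_1})+\del(\cl{D_2},e_{D_2})$ to conclude near-minimality, i.e.\ outcome (b) of \ref{main1}. The only differences are refinements --- you prove the exact recursion $\del(G,e)=-\frac12+\min\{\del(G',e'),\bdel(G',e')\}$ where the paper only needs the upper bound, you handle the loop and $u'=v'$ degeneracies explicitly, and you correctly note that the inequality from Lemma \ref{lem:baltighttheta}(ii) is used outside its stated simplicity hypothesis but its derivation does not require it (the paper instead performs the equivalent lifting construction directly in $G$).
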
 
		\iflong
		\begin{subproof}
		Since $G-e$ is simple, if $G$ is not simple, then there is exactly one edge $e^*$ parallel with $e$.
		Let $G'$ be the graph obtained from $G$ by suppressing  $\{u,v\}$ to an edge $e'$. 
		
				Then $n(G) = n(G')+2$ and $n_2(G) = n_2(G')$.
		By the inductive hypothesis, we have $\del(G',e')\leq -\frac{1}{2}$.
		But every even cover $F'\in \ec(G',e')$ gives an even cover $F\in \ec(G,e)$ with the same excess, so
		\begin{align*}
		    \del(G,e)
		    &= \min_{\substack{F\in \ec(G,e)}} \exc(F) -2 - \frac{n(G)+n_2(G)}{4} \\
		    &\leq \min_{\substack{F'\in \ec(G',e')}} \exc(F') -2 - \frac{n(G')+n_2(G')+2}{4} \\
		    &= \del(G',e') - \frac{1}{2} \\
		    &\leq -1.
		\end{align*}
		
		Now suppose $\del(G,e)=-1$. Then both inequalities above are tight; in particular, we have $\del(G',e')=-\frac{1}{2}$, and by the inductive hypothesis, $G'$ is a loop or  $(G',e')$ is a balanced tight rooted $\theta$-chain.
		If $G'$ is a loop then $(G,e)$ satisfies (b) of \ref{main1}. So assume that $(G',e')$ is a balanced tight rooted $\theta$-chain, and let $C_1,C_2$ denote the two chains of $(G',e')$.
		
		Then a pair of even covers $F_1,F_2$ where $F_i\in \ec(\cl{C_i},e_{C_i})$ for each $i\in[2]$ gives an even cover $F \in \ec(G,e)$ by combining the two cycles of $F_i$ through $e_{C_i}$ and adding the cycle with edge set $\{e,e^*\}$, with $$\exc(F)-2 = (\exc(F_1)-2)+(\exc(F_2)-2)+2.$$
		Since $n(G) = n(\cl{C_1})+n(\cl{C_2})+4$ and $n_2(G) = n_2(\cl{C_1})+n_2(\cl{C_2})$, we have
		\begin{align*}
		    \minexc(G,e)
		    &\leq \minexc(\cl{C_1},e_{C_1})+\minexc(\cl{C_2},e_{C_2}) + 2 \\
		    &= \frac{n(G)+n_2(G)}{4} + 1 + \del(\cl{C_1},e_{C_1}) + \del(\cl{C_1},e_{C_1}),
		\end{align*}
		so $\del(G,e)\leq 1+\del(\cl{C_1},e_{C_1})+\del(\cl{C_2},e_{C_2})$.
		Thus, we have $\del(\cl{C_i},e_{C_i})\in\{-\frac{1}{2},-1\}$ for each $i\in[2]$; in other words, $(G',e')$ is a near-minimal rooted $\theta$-chain. So $(G,e)$ satisfies (b) of \ref{main1}.
		\end{subproof}
		\fi

    \begin{claim}\label{clm:no2edgecut}
		We may assume that $e$ is not in any 2-edge-cut of $G$.
		\end{claim}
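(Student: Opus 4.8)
Suppose $e$ lies in a 2-edge-cut $\{e,e'\}$ of $G$. The plan is to show directly that $\del(G,e)\le -1$, and that outcome (c) of \ref{main1} holds whenever $\del(G,e)=-1$; this establishes \ref{main1/2} for $G$ (its equality clause being vacuous, since then $\del(G,e)<-\frac{1}{2}$) as well as \ref{main1}, so afterwards we may assume that no 2-edge-cut contains $e$. Write $e'=xy$, let $A$ and $B$ be the two components of $G-\{e,e'\}$ with $u\in A$ and $v\in B$, and rename the ends of $e'$ so that $x\in A$ and $y\in B$. Put $G_1:=A+ux$ (a loop when $u=x$) and $G_2:=B+vy$. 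Each $G_i$ is obtained from $G$ by contracting the other side of the cut to a single vertex and then suppressing it; since $G$ is 2-connected, one checks that each $G_i$ is a permitted instance of Theorem \ref{thm:main} (2-connected subcubic, or a loop, or a pair of parallel edges), on fewer than $n(G)$ vertices, and with $G_i$ minus its new edge equal to $A$ or $B$, hence simple. Thus the inductive hypothesis applies to $(G_1,ux)$ and $(G_2,vy)$.

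The main step is to establish the additivity $\del(G,e)=\del(G_1,ux)+\del(G_2,vy)$ and $\bdel(G,e)=\bdel(G_1,ux)+\bdel(G_2,vy)$. For this one observes that any cycle of an even cover using $e$ must cross the cut, hence uses both $e$ and $e'$; splitting such a cycle at the cut gives a bijection between $\ec(G,e)$ and $\ec(G_1,ux)\times\ec(G_2,vy)$ under which $\exc(F)=\exc(F_1)+\exc(F_2)-2$ (one cycle becomes two). Dually, an even cover avoiding $e$ avoids $e'$ as well and restricts to even covers of $A=G_1-ux$ and $B=G_2-vy$ with $\exc(F)=\exc(F_1)+\exc(F_2)$, giving a bijection between $\bec(G,e)$ and $\bec(G_1,ux)\times\bec(G_2,vy)$. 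Since $n(G)=n(G_1)+n(G_2)$ and $n_2(G)=n_2(G_1)+n_2(G_2)$, the additivity follows from the definitions of $\del$ and $\bdel$. (This is the analogue of Proposition \ref{prop:contractchain} across a 2-edge-cut, whose two sides need not individually be subcubic chains.)

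Since $\del(G_i,\cdot)\le -\frac{1}{2}$ by \ref{main1/2} applied inductively, we obtain $\del(G,e)\le -1$. If $\del(G,e)=-1$, then $\del(G_1,ux)=\del(G_2,vy)=-\frac{1}{2}$, so by the equality clause of \ref{main1/2} each of $(G_1,ux)$ and $(G_2,vy)$ is a loop or a balanced tight rooted $\theta$-chain; in either case $A$ and $B$ are each a single vertex or 2-connected, since a union of two internally disjoint subcubic chains with common endpoints is 2-connected. Consequently the two subcubic chains of $G$ with end edges $e$ and $e'$ — the one with interior $V(B)$ and the one with interior $V(A)$ — are genuine subcubic chains of $G$, their suppressions are exactly $(G_1,ux)$ and $(G_2,vy)$, and hence outcome (c) of \ref{main1} holds.

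The main obstacle I anticipate is the careful treatment of degenerate configurations: $u=x$ or $v=y$ (forcing a side of the cut to be a single vertex and the corresponding $G_i$ to be a loop), and a chain of one of the $\theta$-chains above being trivial (forcing $G_i$ to be a pair of parallel edges). None of these endangers $\del(G,e)\le -1$, since in each such case $\del(G_i,\cdot)$ only becomes more negative, and the equality analysis need only be run when both $(G_i,\cdot)$ are in the clean regime (loop or balanced tight rooted $\theta$-chain), where $A$ and $B$ are single vertices or 2-connected. The only genuine subtlety is that when $u=x$ (or $v=y$), only one of the two candidate chains with end edges $e,e'$ actually meets the formal definition of a subcubic chain — but that chain still suppresses to a loop or a balanced tight rooted $\theta$-chain, which is all outcome (c) requires.
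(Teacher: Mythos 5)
Your argument is correct and takes essentially the same route as the paper: the paper lets $C$ be the subcubic chain of $G$ with end edges $e,e'$ (so that $\cl C$ and $G/C$ are exactly your $B+vy$ and $A+ux$), invokes Proposition \ref{prop:contractchain} for precisely the additivity $\del(G,e)=\del(G/C,e_{G/C})+\del(\cl C,e_C)$ that you re-derive by hand, and then concludes $\del(G,e)\le -1$ with outcome (c) of \ref{main1} in the equality case just as you do. The only difference is presentational: you inline the proof of Proposition \ref{prop:contractchain} (and flag the degenerate one-vertex-side case explicitly) rather than citing it.
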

		\iflong
		\begin{subproof}
		Suppose there is an edge $e'$ such that $\{e,e'\}$ is a 2-edge-cut of $G$. Let $C$ be a subcubic chain of $G$ with end edges $e,e'$.
		By Proposition \ref{prop:contractchain} and by the inductive hypothesis applied to $(G/C,e_{G/C})$ and $(\cl C,e_C)$, we have
		\begin{align*}
		    \del(G,e) &= \del(G/C,e_{G/C})+\del(\cl C,e_C) \leq -1.
		\end{align*}
		Moreover, if $\del(G,e)=-1$, then
		$\del(G/C,e_{G/C})=\del(\cl C,e_C) = -\frac{1}{2}$, so $(G/C,e_{G/C})$ and $(\cl C,e_C)$ are loops or balanced tight rooted $\theta$-chains and  (c) of \ref{main1} holds for $(G,e)$.
		\end{subproof}
    \fi
    By Claim \ref{clm:no2edgecut}, 
    let $u_1,u_2$ denote the two neighbors of $u$ distinct from $v$, and let $v_1,v_2$ denote the two neighbors of $v$ distinct from $u$. 
		Moreover, there exist two disjoint paths $P_1,P_2$ from $\{u_1,u_2\}$ to $\{v_1,v_2\}$ in $G-\{u,v\}$. 
		We may assume without loss of generality that the set of endpoints of $P_i$ is $\{u_i,v_i\}$, $i \in [2]$.

		Let $S$ denote the set of all cut edges in $G-\{u,v\}$. Then each component of $G-\{u,v\}-S$ is either an isolated vertex or 2-connected.  
		\begin{claim}\label{clm:Z_1Z_2}
		For each $i \in [2]$, there is a unique component $Z_i$ of $G-\{u,v\}-S$,  such that there are three paths in $G-\{u,v\}$ from $Z_i$ to $\{u_i,v_i,u_{3-i}\}$, pairwise disjoint except possibly at their endpoints in $Z_i$, and there are three paths in $G-\{u,v\}$ from $Z_i$ to $\{u_i,v_i,v_{3-i}\}$, pairwise disjoint except possibly at their endpoints in $Z_i$. See Figures \ref{fig:Z1neqZ2} and \ref{fig:Z1=Z2}.
		\end{claim}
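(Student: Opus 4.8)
The plan is to work with the ``bridge tree'' of $H := G - \{u,v\}$ and realise $Z_i$ as a median node. Since $S$ is exactly the set of bridges of the connected graph $H$, contracting each component of $H - S$ to a node gives a tree $T$ whose edge set is identified with $S$; write $[x]$ for the node containing a vertex $x$, and note each node is a connected subgraph of $H$ (a single vertex or $2$-connected). The bridges on any $u_1$--$v_1$ path in $H$ are precisely the edges of the unique $[u_1]$--$[v_1]$ path $\pi_1$ of $T$, and the bridges on any $u_2$--$v_2$ path are the edges of the unique $[u_2]$--$[v_2]$ path $\pi_2$. The key point is that $P_1$ and $P_2$ are vertex-disjoint, so $\pi_1$ and $\pi_2$ are \emph{edge}-disjoint in $T$ (a common edge of $\pi_1,\pi_2$ would be a bridge lying on both $P_1$ and $P_2$).

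Fix $i=1$ and let $Z_1$ be the median in $T$ of $[u_1],[v_1],[u_2]$ --- equivalently, the node of $\pi_1$ closest to $[u_2]$. The first step is to show that $Z_1$ equals the median of $[u_1],[v_1],[v_2]$, i.e. the node of $\pi_1$ closest to $[v_2]$: if these two nodes of $\pi_1$ were distinct, the path $\pi_2$ would be forced to run through the nonempty sub-path of $\pi_1$ joining them, so $\pi_1$ and $\pi_2$ would share an edge, a contradiction. Thus both medians coincide, which defines $Z_1$; symmetrically, $Z_2$ is the common median of $[u_2],[v_2],[u_1]$ and of $[u_2],[v_2],[v_1]$.

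Next, to produce the fans: the median $Z_1$ comes with three $T$-paths $\rho_{u_1},\rho_{v_1},\rho_{u_2}$ from $Z_1$ to $[u_1],[v_1],[u_2]$ that pairwise meet only at $Z_1$. Lift each $\rho$ to a path of $H$ by crossing the bridges it uses and routing inside each intermediate node between consecutive bridge-endpoints, and inside the terminal node to $u_1$, $v_1$, or $u_2$ (using connectedness of the nodes). Since the $\rho$'s share no node and no edge away from $Z_1$, the three lifted paths are pairwise disjoint except within $Z_1$, giving the first fan; the same lifting applied to the (identical) median of $[u_1],[v_1],[v_2]$ gives the second. For uniqueness, if a component $Z$ carries a fan to $\{u_1,v_1,u_2\}$, then each of its three paths lifts to a $T$-path out of $Z$, and any two of these are edge-disjoint; since two edge-disjoint $T$-paths emanating from a common node meet only there, $Z$ lies on $\pi_1$ and $[u_2]$ attaches to $\pi_1$ along a $T$-path meeting it only at $Z$ --- that is, $Z$ is the node of $\pi_1$ closest to $[u_2]$, namely $Z_1$. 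As the median in a tree is unique, $Z = Z_1$.

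The conceptual heart, the median-coincidence step, is short once the edge-disjointness of $\pi_1,\pi_2$ is observed. I expect the bulk of the work to lie in the bookkeeping around the lifting in both directions: checking that a fan in $T$ lifts to a genuine fan in $H$ with no accidental vertex reuse among the arms, that a fan in $H$ yields pairwise edge-disjoint $T$-paths, and that the degenerate configurations (some of $[u_1],[v_1],[u_2]$ coinciding with each other or with $Z_1$, or $u_1=v_1$, so that some arms collapse to single vertices inside $Z_1$) are handled uniformly.
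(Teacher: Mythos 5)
Your proof is correct and follows essentially the same route as the paper's: both work with the tree of components of $G-\{u,v\}-S$ and derive the coincidence of the two candidate components from the fact that no cut-edge of $G-\{u,v\}$ can simultaneously separate $u_1$ from $v_1$ and $u_2$ from $v_2$ (you obtain this from the disjointness of $P_1,P_2$, while the paper observes that such an edge together with $e$ would form a 2-edge-cut, which is exactly where $P_1,P_2$ come from in the first place). The only substantive difference is that your median construction also supplies an explicit existence proof for $Z_i$ via lifting tree paths, a point the paper's proof treats as immediate and for which it only argues uniqueness.
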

		\iflong
		\begin{subproof}
		By symmetry, it suffices to prove the claim for $i=1$.
		First, we show that there is a unique component $Z_1$ of $G-\{u,v\}-S$ such that there are three paths in $G-\{u,v\}$ from $Z_1$ to $\{u_1,v_1,u_2\}$, pairwise disjoint except possibly at their endpoints in $Z_1$.
		Indeed, if there were two distinct such components $Z,Z'$, they are by definition separated by a cut-edge $s\in S$ of $G-\{u,v\}$.
		But $G-\{u,v\}-s$ has exactly two connected components, one of which contains at least two of $\{u_1,v_1,u_2\}$, so one of $Z,Z'$ is separated from two vertices of $\{u_1,v_1,u_2\}$ by a cut-edge, contradicting the assumptions on $Z,Z'$.
		
		Similarly, there is a unique connected component $Z_1'$ of $G-\{u,v\}-S$ such that there are three paths in $G-\{u,v\}$ from $Z_1'$ to $\{u_1,v_1,v_2\}$, pairwise disjoint except possibly at their endpoints in $Z_1'$.
		We now show that $Z_1=Z_1'$.
		Otherwise, there is a cut edge $s$ of $G-\{u,v\}$ separating $Z_1$ from $Z_1'$.
		Then the two connected components of $G-\{u,v\}-s$ each contain exactly one of $\{u_1,v_1\}$ and exactly one of $\{u_2,v_2\}$.
		But this implies that $\{e,s\}$ is a 2-edge-cut in $G$, contradicting Claim \ref{clm:no2edgecut}.
		\end{subproof}
		\fi
		
		There are two cases to consider: either $Z_1\neq Z_2$ or $Z_1 = Z_2$. 
		For $i\in[2]$, let $u_i'$ (respectively, $v_i'$)  denote the vertex of $Z_i$ that is the endpoint of a (possibly trivial) path in $G-\{u,v\}$ from $u_i$ (respectively, $v_i$) to $Z_i$ that is internally disjoint from $Z_1 \cup Z_2$.
		Note that $u_i'$ and $v_i'$ are uniquely determined. For $i\in [2]$, 
		let $U_i$ denote the unique (possibly trivial) subcubic chain of $G-\{v,u_{3-i}\}$ with endpoints $\{u,u_i'\}$, and let $V_i$ denote the unique subcubic chain of $G-\{u,v_{3-i}\}$ with endpoints $\{v,v_i'\}$.

		\begin{figure}[H]
		    \centering
		    \includegraphics[scale=0.35]{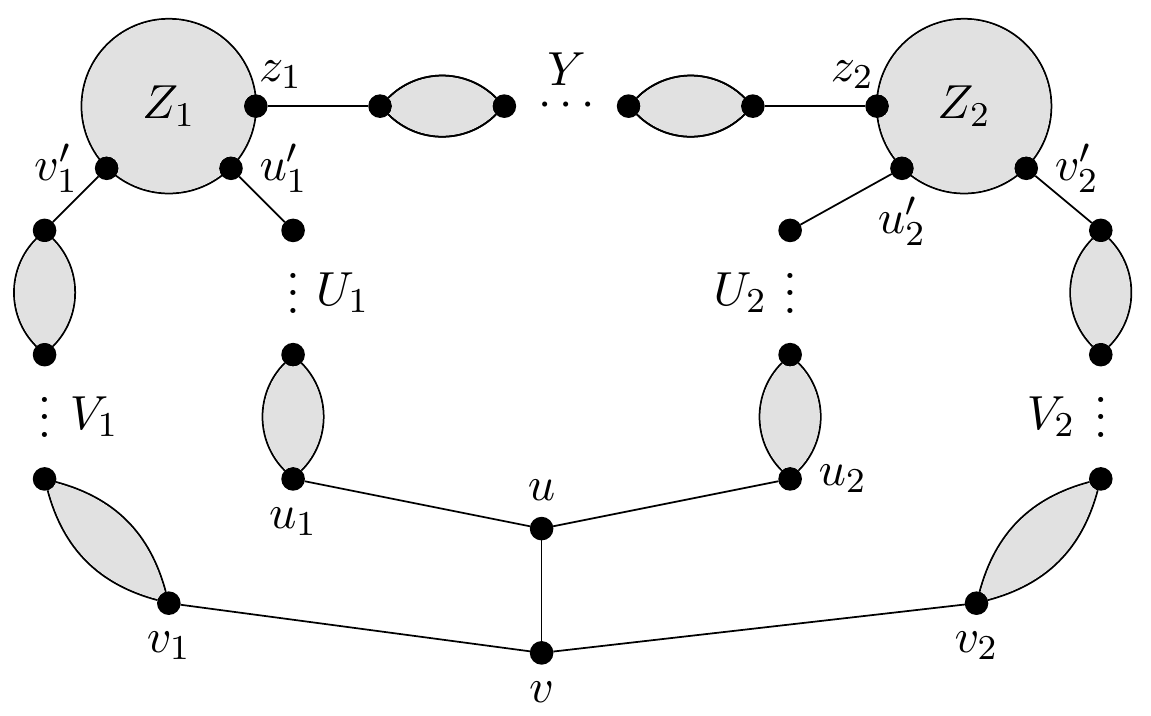}
		    \caption{$Z_1 \neq Z_2$}
		    \label{fig:Z1neqZ2}
		\end{figure}

		{\bf Case 1: $Z_1 \neq Z_2$}.
		
		There is a cut-edge separating $Z_1$ and $Z_2$ in $G-\{u,v\}$ and there is a unique subcubic chain $Y$ of $G-\{u,v\}$ with an endpoint $z_i\in Z_i$ for each $i \in [2]$, internally disjoint from $Z_1\cup Z_2$.
		Then $G$ is the union of $U_1, U_2, V_1, V_2, Z_1, Z_2, Y,$ and the edge $e=uv$.
		We have, for $i,j\in[2]$,
		\begin{align*}
			n(G) &=n(\cl{U_1})+n(\cl{U_2})+n(\cl{V_1})+n(\cl{V_2})+n(Z_1+u_i'z_1) + n(Z_2+v_j'z_2) + n(\cl Y) + 2, \\
			n_2(G) &= n_2(\cl{U_1})+n_2(\cl{U_2})+n_2(\cl{V_1})+n_2(\cl{V_2})+ n_2(Z_1+u_i'z_1)+n_2(Z_2+v_j'z_2)+ n_2(\cl Y)-2.
		\end{align*}

		Suppose $F\in \ec(G,e)$ goes through $U_1, Y$, and $V_2$. Then there is a correspondence between $F$ and the tuple $(F_{U_1}, F_{Z_1}, F_Y, F_{Z_2},F_{V_2}, F_{U_2},F_{V_1})$, where 
		\begin{itemize}
		    \item $F_{U_1}\in \ec(\cl{U_1}, e_{U_1}),\ F_{Z_1}\in \ec(Z_1+u_1'z_1, u_1'z_1),\  F_Y\in \ec(\cl Y,e_Y),\ F_{Z_2}\in \ec(Z_2+v_2'z_2,v_2'z_2),\ F_{V_2}\in \ec(\cl{V_2},e_{V_2})$, and
		    \item $F_{U_2}\in \bec(\cl{U_2},e_{U_2}),\  F_{V_1}\in\bec(\cl{V_1},e_{V_1})$.
		\end{itemize}
		This gives
		\begin{align*}
			\minexc(G,e) &\leq \minexc(\cl{U_1}, e_{U_1}) + \minexc(Z_1+u'_1z_1,u'_1z_1) + \minexc(\cl Y, e_Y)  + \minexc(Z_{2}+v'_{2}z_{2},v'_{2}z_{2}) + \minexc(\cl{V_2},e_{V_{2}}) \\
			&\quad + \bminexc(\cl{U_2},e_{U_2}) + \bminexc(\cl{V_1},e_{V_1})  \\
			&= \frac{n(G)+n_2(G)}{4} + \del(\cl{U_1}, e_{U_1}) + \del(Z_1+u'_1z_1,u'_1z_1) + \del(\cl Y, e_Y) + \del(Z_{2}+v'_{2}z_{2},v'_{2}z_{2}) + \del(\cl{V_2},e_{V_{2}})  \\
			&\qquad\qquad
			+ \bdel(\cl{U_2},e_{U_2}) + \bdel(\cl{V_1},e_{V_1}),
		\end{align*}
		hence
		\begin{align}
		\label{eqn:case1del1}
		\begin{split}
			\del(G,e) &\leq 
			\del(\cl{U_1}, e_{U_1}) + \del(Z_1+u'_1z_1,u'_1z_1) + \del(\cl Y, e_Y) + \del(Z_{2}+v'_{2}z_{2},v'_{2}z_{2}) + \del(\cl{V_2},e_{V_{2}}) \\
			&\quad 
			+ \bdel(\cl{U_2},e_{U_2}) + \bdel(\cl{V_1},e_{V_1}).
		\end{split}
		\end{align}
		
		Similarly, by considering an even cover in  $\ec(G, e)$ through $U_2,Y$, and $V_1$, we obtain
		\begin{align}
		\label{eqn:case1del2}
		\begin{split}
			\del(G,e) &\leq 
			\del(\cl{U_2}, e_{U_2}) + \del(Z_2+u'_2z_2,u'_2z_2) + \del(\cl Y, e_Y) + \del(Z_{1}+v'_{1}z_{1},v'_{1}z_{1}) + \del(\cl{V_1},e_{V_{1}}) \\
			&\quad
			+ \bdel(\cl{U_1},e_{U_1}) + \bdel(\cl{V_2},e_{V_2}). 
		\end{split}
		\end{align}
		
		Now suppose $\del(G,e)\geq -1$. Then
		\begin{align*}
			\begin{split}
			-1
			&\leq \del(\cl{U_1}, e_{U_1}) + \del(Z_1+u'_1z_1,u'_1z_1) + \del(\cl Y, e_Y) + \del(Z_{2}+v'_{2}z_{2},v'_{2}z_{2}) + \del(\cl{V_2},e_{V_{2}}) \\
			&\quad + \bdel(\cl{U_2},e_{U_2}) + \bdel(\cl{V_1},e_{V_1}) 
			\end{split}\tag{by \eqref{eqn:case1del1}}\\
			&\leq -\big(\bdel(\cl{U_1}, e_{U_1})  + \bdel(\cl{V_2},e_{V_{2}}) 
			+ \del(\cl{U_2},e_{U_2}) + \del(\cl{V_1},e_{V_1})\big) \qquad\qquad \tag{by inductive hypothesis} \\
			&\quad + \del(Z_1+u'_1z_1,u'_1z_1) + \del(\cl Y, e_Y) + \del(Z_{2}+v'_{2}z_{2},v'_{2}z_{2}) \\
			&= -\big(\bdel(\cl{U_1}, e_{U_1})  + \bdel(\cl{V_2},e_{V_{2}}) 
			+ \del(\cl{U_2},e_{U_2}) + \del(\cl{V_1},e_{V_1})\big) \\
			&\quad 
			-\big(\del(Z_2+u'_2z_2,u'_2z_2) + \del(\cl Y, e_Y)  + \del(Z_{1}+v'_{1}z_{1},v'_{1}z_{1})\big) \\
			&\quad 
			+\big(\del(Z_2+u'_2z_2,u'_2z_2) + \del(\cl Y, e_Y)  + \del(Z_{1}+v'_{1}z_{1},v'_{1}z_{1})\big) \\
			&\quad + \del(Z_1+u'_1z_1,u'_1z_1) + \del(\cl Y, e_Y) + \del(Z_{2}+v'_{2}z_{2},v'_{2}z_{2})\\
			&\leq 1 + \del(Z_1+u'_1z_1,u'_1z_1) + \del(\cl Y, e_Y)  + \del(Z_{2}+v'_{2}z_{2},v'_{2}z_{2}) \qquad\qquad \tag{by \eqref{eqn:case1del2}} \\
			&\quad + \del(Z_2+u'_2z_2,u'_2z_2) + \del(\cl Y, e_Y)  + \del(Z_{1}+v'_{1}z_{1},v'_{1}z_{1}).
		\end{align*}
		This gives
		\begin{align*}
			-2&\leq\del(Z_1+u'_1z_1,u'_1z_1)  + \del(Z_{2}+v'_{2}z_{2},v'_{2}z_{2}) + \del(Z_2+u'_2z_2,u'_2z_2) + \del(Z_{1}+v'_{1}z_{1},v'_{1}z_{1})
			+ 2\del(\cl Y, e_Y)  \\
			&\leq -2,
		\end{align*}
		since by inductive hypothesis, the all terms are each at most $-\frac{1}{2}$ except $\del(\cl Y, e_Y)=0$ when $Y$ is a trivial chain.
		Hence, $\del(G,e)=-1$, 
		\begin{align*}
		    \del(Z_1+u'_1z_1,u'_1z_1)   =\del(Z_{2}+v'_{2}z_{2},v'_{2}z_{2})  =\del(Z_1+u'_2z_2,u'_2z_2)  =\del(Z_{1}+v'_{1}z_{1},v'_{1}z_{1})
			=-\frac{1}{2},
		\end{align*}
		and $\del(\cl Y,e_Y)=0$ (i.e., $Y$ is a trivial chain).
		By Lemma \ref{lem:block3vx}, $Z_1$ and $Z_2$ are single vertices. So for $i,j\in [2]$, $\del(Z_i+u'_jz_i,u'_jz_i)= \del(Z_i+v'_jz_i,v'_jz_i)= -\frac{1}{2}$. 
		Hence, from \eqref{eqn:case1del1} and \eqref{eqn:case1del2}, and by the inductive hypothesis, we have 
		$$\del(\cl{U_i},e_{U_i})=\del(\cl{V_j},e_{V_j})=0$$ 
		for each $i,j\in[2]$, so $U_i,V_j$ are all trivial chains as well.
		This proves that $G\cong K_4$, satisfying (a) of  \ref{main1}.

		\begin{figure}[H]
		    \centering
		    \includegraphics[scale=0.3]{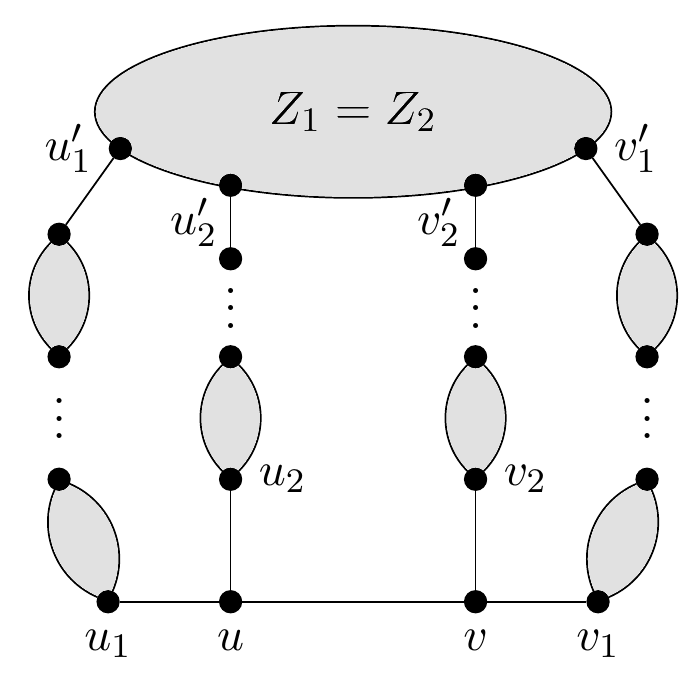}
		    \caption{$Z_1 = Z_2$}
		    \label{fig:Z1=Z2}
		\end{figure}

		{\bf Case 2: $Z_1=Z_2$}.
		
		Let $Z:=Z_1=Z_2$.
		Then $u_1',u_2',v_1',v_2'$ are distinct vertices (since $G$ is subcubic and $Z$ is 2-connected),
		and $G$ is the union of $U_1,U_2,V_1,V_2,Z$, and the edge $e$. Note that
		\begin{align*}
			n(G) &= n(\cl{U_1}) + n(\cl{U_2}) + n(\cl{V_1}) + n(\cl{V_2}) + n(Z+u'_iv'_j) + 2 \\
			n_2(G) &= n_2(\cl{U_1}) + n_2(\cl{U_2}) + n_2(\cl{V_1}) + n_2(\cl{V_2}) + n_2(Z+u'_iv'_j) - 2.
		\end{align*}
		
		For $i,j\in [2]$, let $F\in \ec(G,e)$ be an even cover through $U_i$ and $V_j$.
		This corresponds to a tuple $(F_{U_1},F_{U_2},F_{V_1},F_{V_2},F_{Z})$ where
		\begin{itemize}
		    \item $F_{U_i}\in \ec(\cl{U_i},e_{U_i}),\ F_{V_j}\in \ec(\cl{V_j},e_{V_j}),\ F_{Z}\in \ec(Z+u_i'v_j',u_i'v_j')$, and
		    \item $F_{U_{3-i}}\in \bec(\cl{U_{3-i}},e_{U_{3-i}}),\ F_{V_{3-j}}\in \bec(\cl{V_{3-j}},e_{V_{3-j}})$,
		\end{itemize}
		which gives
		\begin{align*}
			\minexc(G,e)
			&\leq \minexc(\cl{U_i},e_{U_i}) + \minexc(\cl{V_j},e_{V_j}) + \minexc(Z+u'_iv'_j,u'_iv'_j) +\bminexc(\cl{U_{3-i}},e_{U_{3-i}})+\bminexc(\cl{V_{3-j}},e_{V_{3-j}}) \\
			&= \frac{n(G)+n_2(G)}{4} + \del(\cl{U_i},e_{U_i}) + \del(\cl{V_j},e_{V_j}) + \del(Z+u'_iv'_j,u'_iv'_j) +\bdel(\cl{U_{3-i}},e_{U_{3-i}})+\bdel(\cl{V_{3-j}},e_{V_{3-j}}).
		\end{align*}
	   Hence, for all $i,j \in [2]$,
		\begin{align}
		\label{eqn:case2dels}
		\begin{split}
			\del(G,e)  &\leq \del(\cl {U_i},e_{U_i}) + \del(\cl {V_j},e_{V_j}) + \del(Z+u'_iv'_j,u'_iv'_j) +\bdel(\cl {U_{3-i}},e_{U_{3-i}})+\bdel(\cl {V_{3-j}},e_{V_{3-j}})
		\end{split}
		\end{align}
		
		We now show that $\del(G,e)\leq -\frac{3}{2}$, which completes the proof of Theorem \ref{thm:main}. 
		Suppose to the contrary that $\del(G,e) \geq -1$. Then by \eqref{eqn:case2dels} and the inductive hypothesis,
		\begin{align*}
			-1 &\leq \del(\cl{U_i},e_{U_i}) + \del(\cl{V_j},e_{V_j})  +\bdel(\cl{U_{3-i}},e_{U_{3-i}})+\bdel(\cl{V_{3-j}},e_{V_{3-j}})+ \del(Z+u'_iv'_j,u'_iv'_j) \tag{by \eqref{eqn:case2dels}}\\
			&\leq - \big(\bdel(\cl{U_i},e_{U_i}) + \bdel(\cl{V_j},e_{V_j})  +\del(\cl{U_{3-i}},e_{U_{3-i}})+\del(\cl{V_{3-j}},e_{V_{3-j}})\big) \qquad\quad \tag{by \ref{maindelbdel}} \\
			&\quad + \del(Z+u'_iv'_j,u'_iv'_j) \\
			&= - \big(\bdel(\cl{U_i},e_{U_i}) + \bdel(\cl{V_j},e_{V_j}) + \del(Z+u'_{3-i}v'_{3-j},u'_{3-i}v'_{3-j}) +\del(\cl{U_{3-i}},e_{U_{3-i}})+\del(\cl{V_{3-j}},e_{V_{3-j}})\big) \\
			&\qquad + \del(Z+u'_iv'_j,u'_iv'_j)+\del(Z+u'_{3-i}v'_{3-j},u'_{3-i}v'_{3-j}) \\
			&\leq 1 + \del(Z+u'_iv'_j,u'_iv'_j)+\del(Z+u'_{3-i}v'_{3-j},u'_{3-i}v'_{3-j}). \tag{by \eqref{eqn:case2dels}}
		\end{align*}
		Hence for $i,j\in [2]$, 
		\begin{equation} \label{eqn:u1v1+u2v2}
	    	-2\leq \del(Z+u'_iv'_j,u'_iv'_j)+\del(Z+u'_{3-i}v'_{3-i},u'_{3-j}v'_{3-j})
		    \end{equation}
		On the other hand, applying Lemma \ref{lem:block3vx} to $u'_i,v'_1,v'_2$ and $v'_j,u'_1,u'_2$, we have for all $i,j\in[2]$
		\begin{align} \label{eqn:u1v1+u1v2}
		    \begin{split}
		    \del(Z+u_i'v_1',u_i'v_1')+\del(Z+u_i'v_2',u_i'v_2') &\leq -2 \text{ and}\\
		    \del(Z+u_1'v_j',u_1'v_j')+\del(Z+u_2'v_j',u_2'v_j') &\leq -2.
		    \end{split}
		\end{align}
		Now, setting $i=j=1$ and setting $i=1$ and $j=2$ in \eqref{eqn:u1v1+u2v2}, we have
		\begin{align*}
			-4 \leq  \del(Z+u'_1v'_1,u'_1v'_1)+\del(Z+u'_2v'_2,u'_2v'_2)+\del(Z+u'_1v'_2,u'_1v'_2)+\del(Z+u'_2v'_1,u'_2v'_1).
		\end{align*}
        On the other hand, setting $i=1$ and $i=2$ in the first inequality of \eqref{eqn:u1v1+u1v2}, we have
		\begin{align*}
			\del(Z+u'_1v'_1,u'_1v'_1)+\del(Z+u'_1v'_2,u'_1v'_2)+\del(Z+u'_2v'_1,u'_2v'_1)+\del(Z+u'_2v'_2,u'_2v'_2) \leq -4.
		\end{align*}
		We thus have equality everywhere.
		In particular, $\del(G,e)=-1$ and we have equality in \eqref{eqn:u1v1+u2v2} and \eqref{eqn:u1v1+u1v2}, which implies that for all $i,j\in[2]$,
		\begin{equation} \label{eqn:deluivj}
		    \del(Z+u_i'v_j',u_i'v_j') = -1.
		\end{equation}
		
		Since $Z+u_i'v_j'$ has at least two vertices of degree 2 (namely $u_{3-i}'$ and $v_{3-j}'$), it is not isomorphic to $K_4$.
		Moreover, since $Z$ is 2-connected, $u_i'v_j'$ is not contained in any 2-edge-cut in $Z+u_i'v_j'$.
		So each $(Z+u_i'v_j',u_i'v_j')$ satisfies (b) or (d) of \ref{main1}.
		 
		We claim that $u_i'v_j'\notin E(Z)$ for all $i,j\in [2]$ (hence $(Z+u_i'v_j',u_i'v_j')$ satisfies (d) of \ref{main1}). For, 
		suppose without loss of generality that $u_1'v_1' \in E(Z)$.
		By the inductive hypothesis, (b) of \ref{main1} holds for $(Z+u_1'v_1',u_1'v_1')$, so suppressing $\{u_1',v_1'\}$ in $Z$ to an edge $e'$ results in a graph $Z'$ such that $(Z',e')$ is a near-minimal rooted $\theta$-chain.
		Let $C_1,C_2$ denote the two chains of $(Z',e')$. Assume without loss of generality that $v_2'\in V(C_1)$. 
		Since $v_2'$ has degree 2 in $Z$, it is in the interior of $C_1$, and this implies that $Z-\{u_1',v_2'\}$ is connected and $v_2'u_1'\notin E(Z)$. Then $(Z+u_1'v_2',u_1'v_2')$ satisfies (d) of \ref{main1}, which implies that $Z - \{u_1',v_2'\}$ is disconnected, a contradiction.
		
		It follows that $(Z+u_i'v_j',u_i'v_j')$ satisfy (d) of \ref{main1} for all $i,j\in[2]$, so $(Z+u_i'v_j',u_i'v_j')$ is a rooted $\theta$-chain for all $i,j\in[2]$.
		Consider the rooted $\theta$-chain $(Z+u_1'v_1',u_1'v_1')$.
		Since $(Z+u_1'v_2',u_1'v_2')$ (respectively, $(Z+u_2'v_1',u_2'v_1')$) is a rooted $\theta$-chain, $\{v_2'\}$ (respectively, $\{u_2'\}$) is a block in one of the chains of $(Z+u_1'v_1',u_1'v_1')$.
		Let $C_1$ denote the subcubic chain of $Z$ with end points $\{u_1', v_1'\}$ not containing $v_2'$, and let $C_2$ denote the subcubic chain of $Z$ with end points  $\{u_1',v_2'\}$ not containing $v_1'$.
		Let $D$ denote the subcubic chain of $Z$ with end points $\{v_1', v_2'\}$ not containing $u_1'$.
		
		Then for $j\in [2]$, $n(Z+u_1'v_j') = n(\cl{C_1})+n(\cl{C_2})+n(\cl{D})+3$ and $n_2(Z+u_1'v_j')=n_2(\cl{C_1})+n_2(\cl{C_2})+n_2(\cl{D})+1$. Thus for each $j\in[2]$, by forming an even cover in $\ec(Z+u_1'v_j',u_1'v_j')$  using even covers from $\bec(\cl{C_j},e_{C_j})$, $\ec(\cl{D},e_D)$, and $\ec(\cl{C_{3-j}},e_{C_{3-j}})$, we obtain
		\begin{align*}
		    \del(Z+u_1'v_j',u_1'v_j') \leq -1 + \bdel(\cl{C_j},e_{C_j})+\del(\cl{D},e_D)+\del(\cl{C_{3-j}},e_{C_{3-j}}). 
		\end{align*}
		Adding these two inequalities and using \eqref{eqn:deluivj}, we have
		\begin{align*}
		    0&\leq \del(\cl{C_1},e_{C_1})+\bdel(\cl{C_1},e_{C_1})+2\del(\cl D,e_D)+\del(\cl{C_2},e_{C_2})+\bdel(\cl{C_2},e_{C_2}) \\
		    &\leq 2\del(\cl D,e_D)
		\end{align*}
		by \ref{maindelbdel} applied to $(\cl{C_i},e_{C_i})$.
		It follows that $D$ is a trivial chain, and $v_1'v_2'\in E(Z)$.
		
		By symmetry, $u_1'u_2'\in E(Z)$. Thus, $\{u_1'u_2',v_1v_2'\}$ is a 2-edge-cut in $Z$. Let $D_1,D_2$ denote the connected components of $Z-\{u_1'u_2',v_1'v_2'\}$ and (by relabeling $u_1',u_2'$ if necessary) assume $u_i',v_i'\in V(D_i)$ for $i\in [2]$. 
		Then for $i,j\in [2]$, $n(Z+u_i'v_j',u_i'v_j') = n(\cl{D_1},e_{D_1})+n(\cl{D_2},e_{D_2})+4$ and $n_2(Z+u_i'v_j',u_i'v_j') = n_2(\cl{D_1},e_{D_1})+n_2(\cl{D_2},e_{D_2})+2$.
		Thus, by forming an even cover in $\ec(Z+u_i'v_j',u_i'v_j')$ using even covers from $\ec(\cl{D_k},e_{D_k})$ and $\bec(\cl{D_{3-k}},e_{D_{3-k}})$ for $k\in [2]$, we get 
		\begin{align*}
		    \del(Z+u_i'v_j',u_i'v_j')
		    &\leq -\frac{3}{2} + \del(\cl{D_k},e_{D_k})+\bdel(\cl{D_{3-k}},e_{D_{3-k}})
		\end{align*}
		Adding these two inequalities  and using \eqref{eqn:deluivj} and \ref{maindelbdel}, we have
		\begin{align*}
		    1 &\leq \del(\cl{D_1},e_{D_1})+\bdel(\cl{D_1},e_{D_1})+\del(\cl{D_2},e_{D_2})+\bdel(\cl{D_2},e_{D_2}) \leq 0,
		\end{align*}
		a contradiction. This completes the proof of Theorem \ref{thm:main}.

    \section{Extremal Examples}\label{sec:tight}
    In this section, we give a structural characterization of the extremal examples of Theorem \ref{thm:tspwalk}. 
    Recall that for a subcubic graph $G$ and any edge $e\in E(G)$, we have 
    \begin{align*}
        \exc(G) &= \min\{\minexc(G,e)+2,\ \bminexc(G,e)\} \\
        &=\frac{n(G)+n_2(G)}{4} + \min\{\del(G,e)+2,\  \bdel(G,e)\}.
    \end{align*}
    So if either $\del(G,e)\leq -\frac{3}{2}$ or $\bdel(G,e)\leq \frac{1}{2}$ for any edge $e \in E(G)$, then $\exc(G) \leq \frac{n(G)+n_2(G)}{4}+\frac{1}{2}$.
    It follows that $\exc(G) = \frac{n(G) + n_2(G)}{4} + 1$ (equivalently, $\tsp(G) = \frac{5n(G)+ n_2(G)}{4} - 1$)  if and only if $(\del(G,e),\bdel(G,e)) = (-1,1)$ for all $e\in E(G)$. 
    
    \ifx
	\begin{prop} \label{prop:tightexamples}
	Let $G$ be a simple 2-connected subcubic graph, and let $e$ be an edge of $G$. Then
	\begin{enumerate}
	    \item [(i)] $(\del(G,e),\bdel(G,e))=(-\frac{1}{2},\frac{1}{2})$ if and only if $(G,e)$ is a minimal rooted $\theta$-chain.
	    \item [(ii)] $(\del(G,e),\bdel(G,e))=(-\frac{3}{2},\frac{3}{2})$ if and only if $G-e$ is a minimal $\theta$-chain and $e$ is a simple chord of a 4-cycle in $G-e$.
	    \item [(iii)] $(\del(G,e),\bdel(G,e))=(-1,1)$ if and only if either $G\cong K_4$ or $G$ is a minimal $\theta$-chain.
	\end{enumerate}
	\end{prop}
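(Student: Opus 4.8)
The plan is to prove the proposition by combining the now-complete Theorem~\ref{thm:main} with Lemmas~\ref{lem:baltighttheta}, \ref{lem:minimaltheta2} and \ref{lem:extremalG-e}. Since $G$ is simple and $2$-connected it is not a loop, so by \ref{main1/2} we have $\del(G,e)=-\tfrac12$ exactly when $(G,e)$ is a balanced tight rooted $\theta$-chain; and by \ref{maindelbdel}, $\del(G,e)+\bdel(G,e)\le 0$, so the equality $(\del(G,e),\bdel(G,e))=(-1,1)$ forces $\del(G,e)\ge -1$ and $\bdel(G,e)\ge 1$ separately. For part (i): if $(G,e)$ is a minimal rooted $\theta$-chain then $(\del(G,e),\bdel(G,e))=(-\tfrac12,\tfrac12)$ by Lemma~\ref{lem:baltighttheta}(iii); conversely, $\del(G,e)=-\tfrac12$ forces via \ref{main1/2} that $(G,e)$ is a rooted $\theta$-chain, and Lemma~\ref{lem:baltighttheta}(iii) then upgrades this to a minimal rooted $\theta$-chain.

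For part (ii), first reduce to the case $G-e$ is $2$-connected: otherwise $G-e$ is a chain of blocks from $u$ to $v$ and Propositions~\ref{prop:subcubicchains} and \ref{prop:contractchain} reduce $\del(G,e),\bdel(G,e)$ to a strictly smaller instance, so the relevant case is $G-e$ $2$-connected. Then $u$ has degree $3$ in $G$ and $G_u$ (obtained from $G-e$ by suppressing $u$ into $f_u$) is defined. For ($\Rightarrow$): $\bdel(G,e)=\tfrac32$ gives by Lemma~\ref{lem:extremalG-e} that $(G_u,f_u)$ is a minimal rooted $\theta$-chain, hence in particular a rooted $\theta$-chain, so Lemma~\ref{lem:minimaltheta2} applies and the ``only if'' part of Lemma~\ref{lem:minimaltheta2}(iii) yields that $G-e$ is a minimal $\theta$-chain with $e$ joining two nonadjacent vertices of a $4$-cycle of $G-e$. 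For ($\Leftarrow$): if $G-e$ is a minimal $\theta$-chain and $e=uv$ joins two nonadjacent vertices of a $4$-cycle $uxvy$ of $G-e$, then $N_{G-e}(u)=\{x,y\}$, and inspecting the recursive structure of minimal $\theta$-chains shows that $(G-e)-u$ splits as the union of two internally disjoint subcubic chains with common endpoints $\{x,y\}$; thus $(G_u,f_u)$ is a rooted $\theta$-chain, and the ``if'' part of Lemma~\ref{lem:minimaltheta2}(iii) gives $(\del(G,e),\bdel(G,e))=(-\tfrac32,\tfrac32)$.

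For part (iii), direction ($\Rightarrow$): from $\del(G,e)=-1$ one of (a)--(d) of \ref{main1} holds. Case (b) is impossible since $G$ is simple; case (d) is impossible since it would make $(G,e)$ a simple rooted $\theta$-chain, forcing $\bdel(G,e)\le\tfrac12<1$ by Lemma~\ref{lem:baltighttheta}(ii). Case (a) gives $G\cong K_4$. In case (c) there is a $2$-edge-cut $\{e,e'\}$ and a subcubic chain $C$ with end edges $e,e'$ such that $(G/C,e_{G/C})$ is a loop or balanced tight rooted $\theta$-chain; Proposition~\ref{prop:contractchain} gives $\del(G/C,e_{G/C})+\del(\cl C,e_C)=-1$ and $\bdel(G/C,e_{G/C})+\bdel(\cl C,e_C)=1$, and since each $\del$-term is $\le-\tfrac12$ and hence (by \ref{maindelbdel}) each $\bdel$-term is $\le\tfrac12$, all four values are determined: both $\del$-terms equal $-\tfrac12$ and both $\bdel$-terms equal $\tfrac12$. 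Then \ref{main1/2}, Lemma~\ref{lem:baltighttheta} and part (i) show that $C$ is a minimal subcubic chain and that $G/C-e_{G/C}$ is a union of two minimal chains with common endpoints; reinserting $C$ in place of $e_{G/C}$ exhibits $G$ as the union of three minimal chains with common endpoints, i.e.\ $G$ is a minimal $\theta$-chain.

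For part (iii), direction ($\Leftarrow$): if $G\cong K_4$ this is a short computation ($K_4$ has a Hamilton cycle through $e$ and one avoiding $e$, so $\minexc(K_4,e)=0$ and $\bminexc(K_4,e)=2$, giving $(\del,\bdel)=(-1,1)$). The remaining case, $G$ a minimal $\theta$-chain, is the main obstacle, and the plan is to establish by induction on $n(G)$ the structural claim: \emph{every edge $e$ of a minimal $\theta$-chain $G$ is an end edge of some nontrivial subcubic chain $C$ of $G$ that is itself a minimal subcubic chain and with $(G/C,e_{G/C})$ a minimal rooted $\theta$-chain.} Given this, the two end edges of $C$ form a $2$-edge-cut containing $e$, and Proposition~\ref{prop:contractchain} together with Lemma~\ref{lem:baltighttheta}(iii) and the fact that a minimal chain $C$ satisfies $(\del(\cl C,e_C),\bdel(\cl C,e_C))=(-\tfrac12,\tfrac12)$ gives $(\del(G,e),\bdel(G,e))=(-1,1)$. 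To prove the claim, write $G=C_1\cup C_2\cup C_3$ with all $C_i$ minimal: if $e$ is an end edge of some $C_i$, take $C:=C_i$, and $G/C_i=C_j\cup C_k\cup\{e_{G/C_i}\}$ is manifestly a minimal rooted $\theta$-chain; otherwise $e$ lies in the unique $2$-connected block $B_i$ of some $C_i$, whose closure is a minimal rooted $\theta$-chain with two minimal chains, into which one descends. The delicate point, and the technical heart of the argument, is that after suppressing such a nested $C$ the newly created chord, together with a path through the branch vertices of $G$, re-decomposes $G/C-e_{G/C}$ as two minimal chains with common endpoints, so that $(G/C,e_{G/C})$ is indeed simple and a minimal rooted $\theta$-chain.
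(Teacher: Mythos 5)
The serious problem is in your part (ii). Your opening move, ``first reduce to the case $G-e$ is $2$-connected,'' is not a reduction: when $G-e$ has a cut vertex, $(G,e)$ is the closure of a subcubic chain with at least two chain-blocks and Proposition \ref{prop:subcubicchains} writes $(\del(G,e),\bdel(G,e))$ as a sum over those blocks, and nothing prevents that sum from equalling $(-\tfrac32,\tfrac32)$. Indeed it can: let $G$ be $K_4$ on $\{a,b,c,d\}$ with the edge $cd$ subdivided by a new vertex $w$, and let $e=dw$. Then $(G,e)$ is the closure of a chain with two chain-blocks, namely a loop at $w$ and $(K_4,cd)$, so Proposition \ref{prop:subcubicchains} gives $(\del(G,e),\bdel(G,e))=(-\tfrac12,\tfrac12)+(-1,1)=(-\tfrac32,\tfrac32)$; one can also verify this directly, since $wdbacw$ is a Hamiltonian cycle through $e$ while the best even cover avoiding $e$ is the $4$-cycle $acbda$ plus the isolated vertex $w$. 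But $G-e$ has the degree-one vertex $w$, so it is not a $\theta$-chain and $e$ is not a chord of any $4$-cycle of $G-e$. Hence the ``only if'' direction of (ii) fails without the extra hypothesis that $G-e$ is $2$-connected (which is exactly how \ref{main3/2}, Lemma \ref{lem:minimaltheta2} and Lemma \ref{lem:extremalG-e} are phrased), and no argument along your lines can close this gap; the claim has to be restated before it can be proved.

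Parts (i) and the forward direction of (iii) follow the paper's route exactly (Theorem \ref{thm:main}\ref{main1/2} with Lemma \ref{lem:baltighttheta}(iii), and the case analysis of \ref{main1} together with Proposition \ref{prop:contractchain} and \ref{maindelbdel}, as in Proposition \ref{prop:del1tight}) and are correct. Your converse of (iii) for minimal $\theta$-chains genuinely departs from the paper: the paper routes through $K_{2,3}$-constructibility (Proposition \ref{prop:minimalthetaK23-constructible} followed by Proposition \ref{prop:k23}, the latter imported from \cite{DKM17} via the $\diamond$-operation bookkeeping), whereas you propose a direct induction producing, for every edge $e$, a $2$-edge-cut $\{e,e'\}$ whose two sides are a minimal chain and a minimal rooted $\theta$-chain. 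That route is viable and more self-contained, but the step you yourself flag as ``the technical heart'' --- that after descending into a chain-block, the graph re-decomposes as a minimal $\theta$-chain on the new branch vertices, with the outer chain through the old branch vertices again minimal --- is precisely what needs to be proved and is only asserted; as written it is a plan rather than a proof.
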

     \begin{proof} Conclusion (i) follows from Theorem \ref{thm:main} \ref{main1/2} and Lemma \ref{lem:baltighttheta}(iii). Conclusion (ii) follws from Theorem \ref{thm:main} \ref{main3/2} and Lemma \ref{lem:minimaltheta2}(iii).  Conclusion (iii) follows from Propositions \ref{prop:k23} and \ref{prop:(-1,1)}.
     \end{proof}
     \fi

    \begin{prop}
    \label{prop:del1tight}
    Let $G$ be a simple 2-connected subcubic graph and let $e$ be an edge of $G$. 
    Then $(\del(G,e),\bdel(G,e))=(-1,1)$ if and only if either $G\cong K_4$ or $G$ is a minimal $\theta$-chain.
    \end{prop}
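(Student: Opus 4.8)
The plan is to prove the two directions separately; the ``if'' direction is a short computation and the ``only if'' direction is where Theorem~\ref{thm:main}\ref{main1} does the work. (Since Theorem~\ref{thm:main} is now fully established, Lemma~\ref{lem:baltighttheta} and all of Theorem~\ref{thm:main} may be used without the ``smaller graph'' restriction.)

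\emph{Sufficiency.} If $G\cong K_4$ then $\tfrac{n(G)+n_2(G)}{4}=1$; every edge of $K_4$ lies in a Hamilton cycle and is avoided by some Hamilton cycle, and a Hamilton cycle is an even cover of excess $2$ (the minimum possible for an even cover containing a cycle), so $(\del(K_4,e),\bdel(K_4,e))=(-1,1)$. If $G$ is a minimal $\theta$-chain with chains $C_1,C_2,C_3$ at hubs $p,q$, I would take $e$ to be an end edge of $C_1$ (such exist, as minimal chains are nontrivial); then $e$ is a cut edge of the subcubic chain $C_1$ of $G$, so Proposition~\ref{prop:contractchain} gives $\del(G,e)=\del(G/C_1,e_{G/C_1})+\del(\cl{C_1},e_{C_1})$ and the analogous identity for $\bdel$. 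Here $(G/C_1,e_{G/C_1})$ is a rooted $\theta$-chain with the two minimal chains $C_2,C_3$, hence a minimal rooted $\theta$-chain, so $(\del,\bdel)(G/C_1,e_{G/C_1})=(-\tfrac12,\tfrac12)$ by Lemma~\ref{lem:baltighttheta}(iii); and minimality of $C_1$ gives $\del(\cl{C_1},e_{C_1})=-\tfrac12$ and, by tightness, $\bdel(\cl{C_1},e_{C_1})=\tfrac12$. Hence $(\del(G,e),\bdel(G,e))=(-1,1)$, so $\exc(G)=\tfrac{n(G)+n_2(G)}{4}+1$, and the same holds for every edge of $G$ by the discussion at the beginning of Section~\ref{sec:tight}.

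\emph{Necessity.} Assume $(\del(G,e),\bdel(G,e))=(-1,1)$. Since $\del(G,e)=-1$, one of the outcomes (a)--(d) of Theorem~\ref{thm:main}\ref{main1} holds. Outcome (b) cannot occur because $G$ is simple. Outcome (d) cannot occur either: if $(G,e)$ were a rooted $\theta$-chain then its two chains $C_1,C_2$ are nontrivial (again by simplicity), so Lemma~\ref{lem:baltighttheta}(ii) gives $\bdel(G,e)\le\tfrac32+\del(\cl{C_1},e_{C_1})+\del(\cl{C_2},e_{C_2})\le\tfrac12<1$, a contradiction. If outcome (a) holds, then $G\cong K_4$. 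So it remains to handle outcome (c).

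\emph{Outcome (c).} Here there is an edge $e'$ with $\{e,e'\}$ a $2$-edge-cut of $G$; let $C,C'$ be the two subcubic chains of $G$ with end edges $e,e'$, so that $G/C=\cl{C'}$ and $G/C'=\cl C$ (with matching distinguished edges), and by Theorem~\ref{thm:main}\ref{main1}(c) each of $(\cl C,e_C)$ and $(\cl{C'},e_{C'})$ is a loop or a balanced tight rooted $\theta$-chain. Feeding $\del(G,e)=-1$ into Proposition~\ref{prop:contractchain} together with Theorem~\ref{thm:main}\ref{main1/2} forces $\del(\cl C,e_C)=\del(\cl{C'},e_{C'})=-\tfrac12$, and since both pairs are tight (loops and balanced tight rooted $\theta$-chains both are), $\bdel(\cl C,e_C)=\bdel(\cl{C'},e_{C'})=\tfrac12$. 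Not both of $\cl C,\cl{C'}$ can be loops, as then $G$ would be two vertices joined by two parallel edges, contradicting simplicity; so, say, $(\cl{C'},e_{C'})$ is a balanced tight rooted $\theta$-chain with $(\del,\bdel)=(-\tfrac12,\tfrac12)$, hence a minimal rooted $\theta$-chain by Lemma~\ref{lem:baltighttheta}(iii), so $\cl{C'}-e_{C'}$ is the union of two minimal chains $D_1,D_2$ with common endpoints $\{r,r'\}$, the endpoints of $C$. Meanwhile $\del(\cl C,e_C)=-\tfrac12$ with $(\cl C,e_C)$ tight makes $C$ a minimal chain. Since $G$ is obtained from $\cl{C'}$ by replacing $e_{C'}=rr'$ with the chain $C$, we get $G=D_1\cup D_2\cup C$, a union of three internally disjoint minimal chains with common endpoints $\{r,r'\}$; that is, $G$ is a minimal $\theta$-chain, as required.

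The step I expect to be the main obstacle is outcome (c): one must recognize that the two sides of the $2$-edge-cut are each ``morally'' minimal rooted $\theta$-chains (a loop side being the length-two path $r$--$w$--$r'$, which also behaves as a minimal chain), then correctly reassemble $G$ into a single minimal $\theta$-chain, keeping careful track of which two vertices become its hubs. The small but necessary point that simplicity rules out the degenerate ``both sides are loops'' case also lives here.
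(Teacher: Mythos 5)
Your overall architecture matches the paper's: you rule out outcomes (b) and (d) of Theorem \ref{thm:main}\ref{main1} exactly as the paper does (simplicity for (b), Lemma \ref{lem:baltighttheta}(ii) for (d)), and in outcome (c) you split $G$ along the $2$-edge-cut via Proposition \ref{prop:contractchain}. Your added direct proof of sufficiency is also sound; the paper instead obtains that direction by combining Propositions \ref{prop:k23} and \ref{prop:minimalthetaK23-constructible} inside Theorem \ref{thm:extremalcharacterization}, so this is a harmless (arguably tidier) deviation.

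There is, however, one genuine gap in your treatment of outcome (c): the parenthetical claim that ``loops and balanced tight rooted $\theta$-chains both are [tight pairs]'', which is what you use to pass from $\del(\cl C,e_C)=\del(\cl{C'},e_{C'})=-\frac12$ to $\bdel(\cl C,e_C)=\bdel(\cl{C'},e_{C'})=\frac12$. A balanced tight rooted $\theta$-chain $(H,f)$ need \emph{not} satisfy $\del(H,f)+\bdel(H,f)=0$: ``tight'' for a rooted $\theta$-chain is defined through the tightness of the closures of its chains, not of the pair $(H,f)$ itself, and Lemma \ref{lem:baltighttheta}(ii) gives $\bdel(H,f)\le \frac32+2\del(\cl{C_1},e_{C_1})$ when the chains are balanced. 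If each chain has $\del=-1$ (e.g.\ each chain consists of two chain-blocks whose closures are $K_4^-$ rooted at its center, so each chain is tight with $\del=-1$), then $(H,f)$ is a balanced tight rooted $\theta$-chain with $\del(H,f)=-\frac12$ but $\bdel(H,f)\le-\frac12$, so the pair is far from tight. The conclusion you want is still true, but it must be extracted from the hypothesis $\bdel(G,e)=1$ rather than from the structure alone: Proposition \ref{prop:contractchain} also yields $\bdel(G,e)=\bdel(G/C,e_{G/C})+\bdel(\cl C,e_C)$, and \ref{maindelbdel} bounds each summand above by the negative of the corresponding $\del$, i.e.\ by $\frac12$; hence $1=\bdel(G,e)\le\frac12+\frac12$ forces both summands to equal $\frac12$. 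This is precisely how the paper closes the step, and with that one-line repair your argument is complete.
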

    \begin{proof}
    Suppose $(\del(G,e),\bdel(G,e))=(-1,1)$. 
    Since $\del(G,e)=-1$, one of the four outcomes of \ref{main1} holds.
    If $G\cong K_4$ then we are done. Since $G$ is simple, (b) of \ref{main1} cannot occur. Moreover, (d) of \ref{main1} does not hold; otherwise, $(G,e)$ is a simple rooted $\theta$-chain and, by Lemma \ref{lem:baltighttheta} (ii), $\bdel(G,e) \le \frac{3}{2} + \del(\cl{C_1},e_{C_1}) + \del(\cl{C_2},e_{C_2}) \le 1/2$, a contradiction.
    
     Thus (c) of \ref{main1} holds: there exists $e'\in E(G)$ such that $\{e,e'\}$ is a 2-edge-cut in $G$ and suppressing either subcubic chain $C$ of $G$ with end edges $e,e'$ yields a loop or a balanced tight rooted $\theta$-chain $(G/C,e_{G/C})$. Let $C$ be a subcubic chain of $G$ with end edges $e, e'$.
     Then by Proposition \ref{prop:contractchain} and \ref{maindelbdel}, 
     \begin{align*}
     -1&=\del(G,e) = \del(G/C,e_{G/C})+\del(\cl C, e_C) \leq -\big(\bdel(G/C,e_{G/C})+\bdel(\cl C, e_C)\big) = -\bdel(G,e) = -1. \end{align*}
     This implies that $(\del(G/C,e_{G/C}),\bdel(G/C,e_{G/C})) = (\del(\cl C,e_C),\bdel(\cl C,e_C)) = (-\frac{1}{2},\frac{1}{2})$, and thus $(\cl C,e_C)$ and $(G/C,e_{G/C})$ are minimal rooted $\theta$-chains (by Lemma \ref{lem:baltighttheta} (iii)). Therefore, by definition, $G$ is a minimal $\theta$-chain (since it is the internally disjoint union of $C$ and the two chains of $(G/C,e_{G/C})$, all of which are minimal).
    \end{proof}

    To give an alternate structural characterization of minimal (rooted) $\theta$-chains, we now describe an operation introduced in \cite{DKM17}. Let $H$ be a graph and $v\in V(H)$ be a vertex of degree 2. A {\it $\diamond$-operation on $H$ at $v$} deletes $v$ from $H$, adds a 4-cycle $D$ disjoint from $H-v$, and adds a matching between the neighbors of $v$ and two nonadjacent vertices in $D$.  See Figure \ref{fig:diamond}. We say that a graph is {\it $H$-constructible} if it can be obtained from $H$ by repeated $\diamond$-operations.
    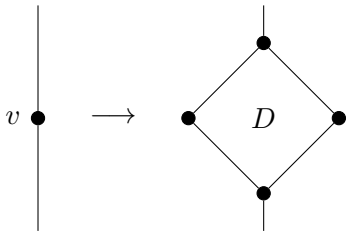
\begin{figure}[H]
        \centering
    \begin{tikzpicture}
	\node[style=vertex] (x) at (0,0) [label=left:$v$]{};
	\node[style=vertex] (a) at (2,0){};
	\node[style=vertex] (b) at (3,1){};
	\node[style=vertex] (c) at (4,0){};
	\node[style=vertex] (d) at (3,-1){};
	
	\node [fill=none] (arrow) at (1,0) {$\longrightarrow$};
    \node [fill=none] (D) at (3,0) {$D$};
	
	\draw[style=edge] (0,1.5) to (x) to (0,-1.5);
	\draw[style=edge] (b) to (3,1.5);
	\draw[style=edge] (d) to (3,-1.5);
	\draw[style=edge] (a) to (b) to (c) to (d) to (a);
    \end{tikzpicture}
    \caption{The $\diamond$-operation}
        \label{fig:diamond}
    \end{figure}
    
    It is observed in \cite{DKM17} that after each $\diamond$-operation, the excess of the new graph increases by 1 and the new quantity $\frac{n(G)+n_2(G)}{4}$ also increases by 1. We will consider $K_{2,3}$-constructible graphs and $K_4^-$-constructible graphs, where $K_4^-$ is the graph obtained from the complete graph $K_4$ by removing an edge. Note that $\exc(K_{2,3}) = \frac{n(K_{2,3}) + n_2(K_{2,3})}{4} + 1$; thus,  if $G$ is $K_{2,3}$-constructible then $\exc(G) = \frac{n(G) + n_2(G)}{4} + 1$. 

    \begin{prop}[Dvo\v r\'ak et al. \cite{DKM17}] \label{prop:k23}
    Let $G$ be a simple 2-connected subcubic graph. If $G\cong K_4$ or $G$ is $K_{2,3}$-constructible, then $(\del(G,e),\bdel(G,e)) = (-1,1)$. 
    \end{prop}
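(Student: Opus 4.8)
The plan is to reduce Proposition~\ref{prop:k23} to the single equality
$$\exc(G)=\frac{n(G)+n_2(G)}{4}+1.$$
Indeed, by the discussion immediately preceding the proposition this equality holds if and only if $(\del(G,e),\bdel(G,e))=(-1,1)$ for every edge $e\in E(G)$; that equivalence rests on the identity $\exc(G)=\frac{n(G)+n_2(G)}{4}+\min\{\del(G,e)+2,\ \bdel(G,e)\}$ together with statements \ref{main1/2}, \ref{main3/2}, and \ref{maindelbdel} of Theorem~\ref{thm:main}. So it suffices to establish the displayed equality in the two cases $G\cong K_4$ and $G$ is $K_{2,3}$-constructible.

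For the base cases: if $G\cong K_4$, a Hamilton $4$-cycle of $K_4$ is an even cover of excess $2$, and every even cover of a graph on at least two vertices has excess at least $2$, so $\exc(K_4)=2=\frac{4+0}{4}+1$. If $G$ is $K_{2,3}$-constructible, the construction begins with $K_{2,3}$, which is bipartite with parts of sizes $2$ and $3$ and therefore has no Hamilton cycle; an even cover consisting of a single $4$-cycle together with the one remaining isolated vertex has excess $3$ and is optimal, so $\exc(K_{2,3})=3=\frac{5+3}{4}+1$ (as is already recorded in the text preceding the proposition).

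For the inductive step I would induct on the number of $\diamond$-operations used to obtain $G$ from $K_{2,3}$, the key input being the observation of Dvo\v r\'ak et al.~\cite{DKM17} (recalled just before the proposition) that a single $\diamond$-operation increases both $\exc$ and $\frac{n(G)+n_2(G)}{4}$ by exactly $1$. Concretely, the operation deletes one degree-$2$ vertex and creates two new degree-$2$ vertices---the two vertices of the added $4$-cycle $D$ not joined to $G$---so $n_2$ grows by $1$ while $n$ grows by $3$; and converting even covers across the operation in both directions (routing a cycle through the deleted vertex $v$ along a Hamilton path of $D$ and leaving the fourth vertex of $D$ isolated, and conversely) shows that the excess changes by exactly $1$. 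Hence $\exc(G)-\frac{n(G)+n_2(G)}{4}$ is unchanged along the construction, so it equals its value $1$ at $K_{2,3}$; this is the displayed equality, and by the first paragraph the proposition follows.

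There is essentially no obstacle here: the one substantive ingredient is the $\diamond$-operation computation of \cite{DKM17}, which is stated in the surrounding text and may simply be cited; everything else is the verification of the two small base cases and an appeal to the already-established characterization of equality coming from Theorem~\ref{thm:main}.
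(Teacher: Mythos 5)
Your proof is correct and takes essentially the same route as the paper, which does not prove Proposition \ref{prop:k23} in detail but cites \cite{DKM17} and sketches exactly this argument in the surrounding text: the equivalence of $\exc(G)=\frac{n(G)+n_2(G)}{4}+1$ with $(\del(G,e),\bdel(G,e))=(-1,1)$ via Theorem \ref{thm:main}, the base cases $K_4$ and $K_{2,3}$, and the observation that each $\diamond$-operation increases both $\exc(G)$ and $\frac{n(G)+n_2(G)}{4}$ by exactly $1$.
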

\ifx
     The following fact about $K_{2,3}$-constructible graphs will be convenient.  
    
    \begin{prop}\label{prop:K_edge_cut}
     If $G$ is a $K_{2,3}$-constructible graph, then $E(G)$ has a unique partition into 2-edge cuts.
    \end{prop}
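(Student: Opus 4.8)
The plan is to recast the statement in terms of \emph{series classes} and then induct on the number of $\diamond$-operations. Since a $K_{2,3}$-constructible graph $G$ is $2$-connected, it is bridgeless, so a $2$-edge cut is exactly a minimal edge cut of size two, which is the same as a pair $\{e,f\}$ of edges lying on exactly the same cycles of $G$. Declaring $e\sim f$ when $e=f$ or $\{e,f\}$ is a $2$-edge cut therefore defines an equivalence relation on $E(G)$ (the series classes), and any $2$-edge cut is a pair of $\sim$-equivalent edges. Consequently the series classes are the only possible blocks of a partition of $E(G)$ into $2$-edge cuts, and such a partition exists and is unique if and only if every series class has size exactly $2$ (a class of size $1$ admits no partition, a class of odd size $\geq 3$ admits none, and a class of even size $\geq 4$ admits several). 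Thus it suffices to prove that every series class of a $K_{2,3}$-constructible graph has size $2$.

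I would induct on the number of $\diamond$-operations used to build $G$. For the base case $G=K_{2,3}$ with degree-$3$ vertices $a,b$ and degree-$2$ vertices $x,y,z$: any cycle through a degree-$2$ vertex uses both of its edges, so $\{ax,bx\}$, $\{ay,by\}$, $\{az,bz\}$ are series pairs, and a short check (no two edges at a common degree-$3$ vertex lie on the same cycles) shows these are exactly the series classes, each of size $2$. For the inductive step, let $G$ arise from a $K_{2,3}$-constructible graph $H$ by a $\diamond$-operation at a degree-$2$ vertex $v$ with neighbours $p,q$, introducing a $4$-cycle $D=d_1d_2d_3d_4d_1$ and edges $pd_1,qd_3$. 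By the inductive hypothesis every series class of $H$ has size $2$; in particular $\{pv,qv\}$ is a full series class, since any cycle through $v$ contains both $pv$ and $qv$.

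The key step is a cycle correspondence. The gadget meets $H-v$ only along the cut $\{pd_1,qd_3\}$, so every cycle of $G$ is of one of three types: (A) a cycle of $H$ avoiding $v$; (B) the $4$-cycle $D$; or (C) a cycle $P\cup\{pd_1,qd_3\}$ together with one of the two $d_1$--$d_3$ paths $d_1d_2d_3$ or $d_1d_4d_3$, where $P$ is a $p$--$q$ path of $H-v$. Type-(C) cycles correspond two-to-one to cycles of $H$ through $v$ via $P\mapsto P\cup\{pv,qv\}$. Using this, I would show two things. First, for edges $e,f$ of $H-v$ one has $e\sim_G f$ iff $e$ and $f$ lie on the same cycles of $H$, i.e.\ iff $e\sim_H f$; hence every series class of $H$ other than $\{pv,qv\}$ survives unchanged in $G$. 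Second, a direct computation of cycle sets shows that the six gadget edges split into exactly three size-$2$ classes: $pd_1$ and $qd_3$ each lie on precisely the type-(C) cycles; $d_1d_2,d_2d_3$ lie on $D$ and the type-(C) cycles using the $d_2$-path; and $d_3d_4,d_4d_1$ lie on $D$ and the type-(C) cycles using the $d_4$-path.

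It remains to check that no gadget class coincides with a surviving class of $H$, which is where the inductive hypothesis does the real work and is the main obstacle. The two gadget classes containing $D$ cannot equal the class of any edge of $H-v$, since no edge of $H-v$ lies on $D$. The class $\{pd_1,qd_3\}$, whose cycle set is all type-(C) cycles, could only merge with an edge of $H-v$ lying on exactly the cycles of $H$ through $v$; but those edges are precisely $pv$ and $qv$, which are deleted, and since $\{pv,qv\}$ is a complete series class of $H$ by the inductive hypothesis, no surviving edge of $H-v$ has this property. Therefore the series classes of $G$ are the size-$2$ classes inherited by $H-v$ together with the three new size-$2$ gadget classes, so every series class of $G$ has size exactly $2$. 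This closes the induction and, by the reduction in the first paragraph, proves that $E(G)$ has a unique partition into $2$-edge cuts. The delicate points are the two-to-one cycle correspondence in type (C) and, above all, the use of the inductive hypothesis that $\{pv,qv\}$ is a full class to forbid an outside edge from being in series with the new edges $pd_1,qd_3$.
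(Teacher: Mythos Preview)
Your proof is correct. The reduction to series classes is sound: in a $2$-edge-connected graph, ``$e\sim f$ iff $e=f$ or $\{e,f\}$ is a $2$-edge cut'' is indeed an equivalence relation (via the cycle characterisation you give), and a partition of $E(G)$ into $2$-edge cuts exists and is unique precisely when every series class has size two. Your cycle correspondence for the $\diamond$-operation is accurate, and the crucial step --- ruling out that some edge of $H-v$ merges with $\{pd_1,qd_3\}$ --- is handled correctly by invoking the inductive hypothesis that $\{pv,qv\}$ is already a full series class of $H$.

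The paper's argument is considerably more direct. It also inducts on the number of $\diamond$-operations, but instead of introducing series classes and a cycle analysis, it simply observes that the only $2$-edge cuts of $G$ involving an edge of the new $4$-cycle are the two cuts at the new degree-$2$ vertices, and that these together with the inherited partition of $E(H)$ (with the pair $\{pv,qv\}$ naturally replaced by the pair of new matching edges) give the unique partition of $E(G)$. Your route through series classes is longer but buys you a cleaner separation of ``existence'' and ``uniqueness'': once every class has size two, both follow automatically, whereas the paper's sketch asserts uniqueness somewhat implicitly. The paper's approach is shorter because it never needs the full cycle correspondence; it works directly with the cuts themselves.
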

    \begin{proof} 
     This is clear when $G = K_{2,3}$. Now suppose $G$ is obtained from a $K_{2,3}$-constructible graph $H$ by one $\diamond$-operation that replaces vertex $v \in V(H)$ with 4-cycle $v_1v_2v_4v_4v_1 $. We choose the notation so that $v_1,v_3$ are of degree 2 in $G$. By induction, $E(H)$ has a unique partition into 2-edge-cuts of $H$, say ${\cal F}$.
 
     The only 2-edge-cuts of $G$ containing an edge of the new 4-cycle are $\{v_1v_2,v_1v_4\}$ and $\{v_3v_2,v_3v_4\}$ which, together with ${\cal F}$, gives the unique partition of $E(G)$ into 2-edge-cuts of $G$.
    \end{proof}
\fi

    We show that the converse of Proposition \ref{prop:k23} is also true, thereby giving a structural characterization of the extremal graphs for Theorem \ref{thm:tspwalk}. First, we have an observation similar to Proposition \ref{prop:k23}. The {\it center} of $K_4^-$ is the  edge whose endpoints both have degree 3.
    
    \begin{prop}\label{prop:minimal_theta_characterization}
    Let $(G,e)$ be a simple minimal rooted $\theta$-chain. Then $G$ is $K_4^-$-constructible, with the edge $e$ corresponding to the center of $K_4^-$. 
    \end{prop}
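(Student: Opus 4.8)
The plan is to induct on $n(G)$, reversing one $\diamond$-operation at each step. The base case $n(G)=4$ is immediate: each of the two chains $C_1,C_2$ of $(G,e)$ is minimal, hence nontrivial, hence has at least one internal vertex, so each $C_i$ is a path of length $2$; then $G$ is obtained from a $4$-cycle by adding the edge $e=uv$ between two nonadjacent vertices, so $G\cong K_4^-$ with $u,v$ its degree-$3$ vertices and $e$ its center.

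Before the induction I would isolate a rooted refinement of the observation of Dvo\v r\'ak et al.\ that a $\diamond$-operation increases both $\exc$ and $\frac{n+n_2}{4}$ by $1$: namely, if $(H,f)$ is a simple rooted $\theta$-chain, $z$ is a degree-$2$ vertex of $H$ not incident with $f$, and $G=\diamond(H,z)$, then $\del(G,f)=\del(H,f)$ and $\bdel(G,f)=\bdel(H,f)$. This follows from a short case analysis of how an even cover of $G$ restricts to the new $4$-cycle $D$ (either $D$ is a cycle of the even cover; or one of the two paths of $D$ between its two matched vertices is used while the opposite vertex of $D$ is isolated; or all four vertices of $D$ are isolated), each case corresponding to an even cover of $H$ whose excess is smaller by $1$ (or $3$, in the degenerate last case), with membership of $f$ unchanged since $z\notin V(f)$; so $\minexc(G,f)=\minexc(H,f)+1$ and $\bminexc(G,f)=\bminexc(H,f)+1$, while $n(G)+n_2(G)=n(H)+n_2(H)+4$. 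As a special case, since both chains of $(K_4^-,\text{center})$ are length-$2$ paths, $(K_4^-,\text{center})$ is a minimal rooted $\theta$-chain and hence $\del(K_4^-,\text{center})=-\tfrac12$ and $\bdel(K_4^-,\text{center})=\tfrac12$ by Lemma~\ref{lem:baltighttheta}(iii).

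Now let $n(G)>4$. Then some chain, say $C_1$, is not a path of length $2$, so (by Proposition~\ref{prop:subcubicchains}, using that $C_1$ is minimal) it has a unique chain-block $(\cl B,\cl{e_B})$ with $B$ $2$-connected, $\del(\cl B,\cl{e_B})=-\tfrac12$, and $\del(\cl B,\cl{e_B})+\bdel(\cl B,\cl{e_B})=0$. As $B$ is not a single vertex, $\cl B$ is not a loop, so part \ref{main1/2} of Theorem~\ref{thm:main} shows $(\cl B,\cl{e_B})$ is a (balanced, tight) rooted $\theta$-chain, and Lemma~\ref{lem:baltighttheta}(iii) then shows $(\cl B,\cl{e_B})$ is a simple minimal rooted $\theta$-chain. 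Since $n(\cl B)=n(B)<n(G)$, the inductive hypothesis gives a $\diamond$-construction $K_4^-=H_0,\dots,H_m=\cl B$ whose center edge is $\cl{e_B}$ throughout. Let $D$ be $B$ itself if $m=0$, and otherwise the $4$-cycle added in $H_{m-1}\to H_m$. In either case $D$ is a $4$-cycle contained in $B$, which is disjoint from $\{u,v\}$; since the two endpoints of $\cl{e_B}$ are the only vertices of $B$ whose degrees differ between $B$ and $G$, every vertex of $D$ has the same degree in $G$ as in $\cl B$, so $D$ has exactly two degree-$3$ vertices $a,b$ in $G$, which are nonadjacent on $D$ (being the two degree-$3$ vertices of the relevant $\diamond$-operation, or of $K_4^-$ when $m=0$) and satisfy $a,b\in V(D)\subseteq V(B)$, hence $a,b\notin\{u,v\}$.

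Reversing $D$ (delete $V(D)$, add a vertex $w$ joined to the two vertices outside $D$ adjacent to $a$ and $b$) yields a simple graph $G'$ with $G=\diamond(G',w)$ and $w\notin\{u,v\}$, and $G'-e$ is the union of $C_2$ with the subcubic chain obtained from $C_1$ by this surgery on $B$; so $(G',e)$ is a simple rooted $\theta$-chain with $n(G')=n(G)-3<n(G)$. By Lemma~\ref{lem:baltighttheta}(iii), $\del(G,e)=-\tfrac12$ and $\bdel(G,e)=\tfrac12$; the refinement above then gives $\del(G',e)=-\tfrac12$ and $\bdel(G',e)=\tfrac12$, so Lemma~\ref{lem:baltighttheta}(iii) makes $(G',e)$ a minimal rooted $\theta$-chain. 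By the inductive hypothesis $G'$ is $K_4^-$-constructible with $e$ the center, and since $G=\diamond(G',w)$ with $w\ne u,v$ leaves $e$ untouched, $G$ is $K_4^-$-constructible with $e$ the center. The step I expect to be the main obstacle is proving that $G'$ is again \emph{minimal} rather than merely a rooted $\theta$-chain — this is exactly what the rooted refinement of the $\diamond$-observation is for — together with the routine but fiddly bookkeeping that the chosen $4$-cycle $D$ really carries the degree pattern of a $\diamond$-configuration once it is seen inside $G$, and that the reversal produces a simple graph.
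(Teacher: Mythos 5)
Your proof is correct, but it takes a genuinely different route from the paper's. The paper's argument is a short structural induction on the two chains of $(G,e)$: since each chain $C_i$ is minimal, its closure $(\cl{C_i},e_{C_i})$ satisfies $(\del,\bdel)=(-\frac12,\frac12)$ and is therefore a loop or a smaller simple minimal rooted $\theta$-chain, so the inductive hypothesis yields a $K_4^-$-construction of each $\cl{C_i}$ centered at $e_{C_i}$; these glue immediately into a $K_4^-$-construction of $G$ centered at $e$, because the $\diamond$-operations building $\cl{C_i}$ never touch the endpoints of $e_{C_i}$ and hence apply verbatim to the copy of $C_i$ sitting between $u$ and $v$. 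You instead peel off one $\diamond$-operation at a time: you invoke the inductive hypothesis once on a chain-block $(\cl B,\cl{e_B})$ merely to \emph{locate} a reversible $4$-cycle $D$, reverse it, and then need an extra lemma --- that a $\diamond$-operation at a degree-$2$ vertex not incident with the root edge preserves both $\del(\cdot,e)$ and $\bdel(\cdot,e)$ --- to certify via Lemma \ref{lem:baltighttheta}(iii) that the peeled graph $(G',e)$ is still a minimal rooted $\theta$-chain before invoking the inductive hypothesis a second time. Your rooted refinement of the Dvo\v r\'ak et al.\ observation is correct (the three-case analysis of how an even cover meets $D$ is exhaustive in a subcubic graph) and is the natural rooted analogue of Proposition \ref{prop:k23}, so it has independent value; but the detour is strictly more work here, since once you hold a $K_4^-$-construction of $\cl B$ you are one sentence from the paper's conclusion, whereas the peeling route additionally obliges you to verify simplicity and $2$-connectivity of $G'$ and the degree pattern of $D$ inside $G$ --- exactly the ``fiddly bookkeeping'' you flag. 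Both arguments are sound.
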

    \begin{proof}
    By \ref{main1/2} and Lemma \ref{lem:baltighttheta} (iii), $(\del(G,e),\bdel(G,e)) = (-\frac{1}{2},\frac{1}{2})$. Let $C_1$ and $C_2$ be the chains of $(G,e)$. 
    By the definition of a minimal rooted $\theta$-chain, for each $i\in[2]$, we have $(\del(\cl{C_i},e_{C_i}),\bdel(\cl{C_i},e_{C_i})) = (-\frac{1}{2},\frac{1}{2})$, so $(\cl{C_i},e_{C_i})$ is either a loop or a minimal rooted $\theta$-chain by \eqref{main1/2} and Lemma \ref{lem:baltighttheta}.
    If $(\cl{C_i},e_{C_i})$ is not a loop, then by induction, it is $K_4^-$-constructible with $e_{C_i}$ corresponding to the center of $K_4^-$.
    It follows that $(G,e)$ is $K_4^-$-constructible with $e$ corresponding to the center of $K_4^-$.
    \end{proof}
    
    \begin{prop} \label{prop:minimalthetaK23-constructible}
    Let $G$ be a simple minimal $\theta$-chain.
    Then $G$ is $K_{2,3}$-constructible.
    \end{prop}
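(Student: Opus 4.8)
The plan is to prove the proposition by induction on $n(G)$, reversing a single $\diamond$-operation at each step to reach a smaller minimal $\theta$-chain; the base case is $G\cong K_{2,3}$. Since a minimal subcubic chain is nontrivial and hence has an internal vertex, any minimal $\theta$-chain all three of whose chains are paths of length $2$ is exactly $K_{2,3}$, so $K_{2,3}$ is the smallest minimal $\theta$-chain, and it is trivially $K_{2,3}$-constructible.

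For the inductive step, suppose $G\not\cong K_{2,3}$ and write $G=C_1\cup C_2\cup C_3$ as an internally disjoint union of minimal subcubic chains with common endpoints. Since each $C_i$ is minimal, by Proposition~\ref{prop:subcubicchains} it has a single chain-block, which is $(\cl{C_i},e_{C_i})$ itself, with $\del(\cl{C_i},e_{C_i})=-\tfrac12$, and hence $\bdel(\cl{C_i},e_{C_i})=\tfrac12$ by tightness. By statement~\ref{main1/2} of Theorem~\ref{thm:main} together with Lemma~\ref{lem:baltighttheta}, $(\cl{C_i},e_{C_i})$ is therefore either a loop --- in which case $C_i$ is a path of length $2$ --- or a minimal rooted $\theta$-chain. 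As $G\not\cong K_{2,3}$, some chain, say $C_1$, is not a path of length $2$; then $(\cl{C_1},e_{C_1})$ is a minimal rooted $\theta$-chain (and is simple, since its two chains are nontrivial, so the endpoints $x,y$ of $e_{C_1}$ are non-adjacent in $\cl{C_1}-e_{C_1}$), and by Proposition~\ref{prop:minimal_theta_characterization}, $\cl{C_1}$ is $K_4^-$-constructible with $e_{C_1}$ corresponding to the center of $K_4^-$.

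I would then locate a reducible $4$-cycle $D=d_1d_2d_3d_4d_1$ in $G$ with $d_1,d_3$ of degree $3$, $d_2,d_4$ of degree $2$, and with the two edges of $G$ leaving $D$ (at $d_1$ and $d_3$) ending at distinct vertices, so that deleting $V(D)$ and adding a new vertex $w$ adjacent to those two endpoints reverses a $\diamond$-operation. If $\cl{C_1}\cong K_4^-$, I take $D=\cl{C_1}-e_{C_1}$, with $\{d_1,d_3\}=\{x,y\}$ (non-adjacent on this $4$-cycle), whose leaving edges in $G$ are the two cut-edges of $C_1$ ending at the two distinct common endpoints of $C_1,C_2,C_3$. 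Otherwise, the last $\diamond$-operation in a construction of $\cl{C_1}$ from $K_4^-$ creates such a $4$-cycle $D$ disjoint from $\{x,y\}$; since $G$ arises from $\cl{C_1}$ only by replacing $e_{C_1}$ with the two cut-edges of $C_1$ at $x,y$, the cycle $D$ retains this structure in $G$. Let $G'$ be the result of reversing the $\diamond$-operation at $D$. Then $G'$ is again a union of three internally disjoint subcubic chains with the same two endpoints, hence a simple $2$-connected subcubic graph, with $n(G')=n(G)-3$ and a degree-$2$ vertex, so $G'\not\cong K_4$, and $G$ is recovered from $G'$ by a $\diamond$-operation at $w$. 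Since a $\diamond$-operation increases both $\exc$ and $\tfrac{n+n_2}{4}$ by $1$~\cite{DKM17}, and $\exc(G)=\tfrac{n(G)+n_2(G)}{4}+1$ as $G$ is a minimal $\theta$-chain, we get $\exc(G')=\tfrac{n(G')+n_2(G')}{4}+1$; as noted at the start of this section this forces $(\del(G',e'),\bdel(G',e'))=(-1,1)$ for all $e'\in E(G')$, so $G'$ is a minimal $\theta$-chain by Proposition~\ref{prop:del1tight}. By the inductive hypothesis $G'$ is $K_{2,3}$-constructible, and hence so is $G$.

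I expect the main obstacle to be the bookkeeping in the third paragraph: checking that the $4$-cycle $D$ extracted from the $K_4^-$-constructible structure of $\cl{C_1}$ really does have two degree-$2$ and two degree-$3$ vertices with distinct exterior neighbors once we work in $G$ rather than in $\cl{C_1}$, and that the $\diamond$-reversal yields a genuine simple $2$-connected subcubic graph on which the induction applies. These points follow from the facts that $\diamond$-operations act on simple $2$-connected graphs at degree-$2$ vertices with two distinct neighbors, and that $G$ and $\cl{C_1}$ agree away from the endpoints $x,y$ of $e_{C_1}$, which lie off $D$ (or, when $\cl{C_1}\cong K_4^-$, are the attachment vertices of $D$ whose exterior neighbors in $G$ are the two distinct common endpoints of $C_1,C_2,C_3$).
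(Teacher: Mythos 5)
Your proof is correct, but it follows a genuinely different route from the paper's. The paper splits $G$ into the chain $C_1$ and the rooted $\theta$-chain $(G/C_1,e_{G/C_1})$, applies Proposition~\ref{prop:minimal_theta_characterization} to \emph{both} pieces to get two $K_4^-$-constructions rooted at their centers, and then merges these into a single $K_{2,3}$-construction of $G$. You instead induct directly on $G$: you use Proposition~\ref{prop:minimal_theta_characterization} only to locate one reducible $4$-cycle (the last $\diamond$-operation in a construction of $\cl{C_1}$), reverse that single operation, and certify that the resulting graph $G'$ is again a minimal $\theta$-chain via the excess arithmetic of \cite{DKM17} together with Proposition~\ref{prop:del1tight}. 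Each route has one step that is asserted rather than argued: in the paper it is the final ``it follows that $G$ is $K_{2,3}$-constructible'' (one must check that the two $K_4^-$-constructions interleave into a $K_{2,3}$-construction); in yours it is the claim that $G'$ is again a union of three internally disjoint subcubic chains --- this needs the observation that $\cl{C_1'}$ equals the penultimate graph in the $K_4^-$-construction of $\cl{C_1}$ and that $K_4^-$-constructible graphs are $2$-connected, so the modified piece is still a subcubic chain and $G'$ is a simple $2$-connected subcubic graph to which Theorem~\ref{thm:main} and Proposition~\ref{prop:del1tight} apply. Your certification step also silently uses the direction $\exc(G)\le \exc(G')+1$ of the $\diamond$-operation observation (true: route the cycle of $F'$ through $w$ along three vertices of $D$ leaving one isolated, or add $D$ as a new cycle if $w$ is isolated), and leans on Propositions~\ref{prop:k23} and~\ref{prop:del1tight}, which the paper's proof of this statement does not need. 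What your approach buys is that it avoids the gluing argument entirely and makes the ``peel off one $\diamond$-operation'' structure explicit; what it costs is the extra bookkeeping verifying that the $\diamond$-reversal stays inside the class of simple $2$-connected subcubic graphs on which the extremal characterization is available.
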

    \begin{proof}
    By definition, there exists a choice of three chains  $C_1,C_2,C_3$ of $G$ with common endpoints such that $G$ is the internally disjoint union $C_1\cup C_2\cup C_3$, and we have $(\del(\cl{C_i},e_{C_i}),\bdel(\cl{C_i},e_{C_i}))=(-\frac{1}{2},\frac{1}{2})$ for each $i\in[3]$.
    If $G\cong K_{2,3}$, then we are done.
    So we may assume without loss of generality that $(\cl{C_1},e_{C_1})$ is not a loop.
    Then it is a minimal rooted $\theta$-chain by Lemma \ref{lem:baltighttheta}, and by Proposition \ref{prop:minimal_theta_characterization}, it is $K_4^-$-constructible with the edge $e_{C_1}$ corresponding to the center of $K_4^-$.
    On the other hand, $(G/C_1,e_{G/C_1})$ is by definition a minimal rooted $\theta$-chain, so it is also $K_4^-$-constructible by Proposition \ref{prop:minimal_theta_characterization}, with $e_{G/C}$ corresponding to the center of $K_4^-$.
    It follows that $G$ is $K_{2,3}$-constructible.
    \end{proof}
   
\ifx .
   
    \ifx
    By Lemma \ref{lem:baltighttheta},  $\frac{1}{2} = \bdel(G,e) \le \frac{3}{2} + \del(\cl{C_1},e_{C_1}) + \del(\cl{C_2},e_{C_2})$. Therefore, since $\del(cl{C_i},e_{C_i}) \le -\frac{1}{2}$ for $i \in [2]$ (by Theorem \ref{thm:main}), 
    $\del(\cl{C_1},e_{C_1}) = \del(\cl{C_2},e_{C_2}) = -\frac{1}{2}$. 
    Again by Lemma \ref{lem:baltighttheta}, $-\frac{1}{2} = \del(G,e) \le -\frac{1}{2} + \min_{i \in [2]}\{\del(\cl{C_i},e_{C_i}) + \bdel(\cl{C_{3-i}},e_{C_{3-i}})\}$. Therefore, since $\del(\cl{C_i},e_{C_i}) = -\frac{1}{2}$ for $i \in [2]$, we have $\bdel(\cl{C_i},e_{C_i}) = \frac{1}{2}$ for $i \in [2]$. 
    
    Thus $(\cl{C_i},e_{C_i}) i \in [2]$ is a loop or a minimal rooted $\theta$-chain. For each $i\in [2]$, if $(\cl{C_i},e_{C_i})$ is not a loop, then by induction, it is $K_4^-$-constructible with $e_{C_i}$ corresponding to the center of $K_4^-$.  Thus, we see that $(G,e)$ is $K_4^-$-constructible with $e$ corresponding to the center  of $K_4^-$.
    \fi

     We need an alternative description of $K_{2,3}$-constructible graphs. 
    
    \begin{prop}\label{prop:K_equivalence}
    Let $G$ be a simple 2-connected subcubic graph. Then $G$ is $K_{2,3}$-constructible if and only if for each $e \in E(G)$, there exists $e'\in E(G)$ such that $\{e,e'\}$ is a 2-edge-cut in $G$ and that suppressing either subcubic chain $C$ of $G$ with end-edges $e,e'$ yields  a loop or a minimal rooted $\theta$-chain $(G/C,e_{G/C})$.
    \end{prop}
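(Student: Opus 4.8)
The plan is to derive both implications from the following criterion, which comes out of combining Propositions~\ref{prop:del1tight}, \ref{prop:k23}, and \ref{prop:minimalthetaK23-constructible}: for a simple $2$-connected subcubic graph $G$, the graph $G$ is $K_{2,3}$-constructible if and only if $G\not\cong K_4$ and $(\del(G,e),\bdel(G,e))=(-1,1)$ for some (equivalently, every) edge $e$. The forward half of this criterion is Proposition~\ref{prop:k23} together with the observation that every $\diamond$-operation strictly increases the number of vertices, so that a $K_{2,3}$-constructible graph has more than $4$ vertices and is therefore not $K_4$; the backward half is Proposition~\ref{prop:del1tight}, which deduces from a single edge that $G\cong K_4$ or $G$ is a minimal $\theta$-chain, followed by Proposition~\ref{prop:minimalthetaK23-constructible}.

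\textbf{Forward implication.}
Assume $G$ is $K_{2,3}$-constructible and fix $e\in E(G)$. By Proposition~\ref{prop:k23} we have $\del(G,e)=-1$ and $\bdel(G,e)=1$, so one of outcomes (a)--(d) of \ref{main1} holds. Outcome (a) is impossible since $G\not\cong K_4$, and (b) is impossible since $G$ is simple. Outcome (d) is impossible because if $(G,e)$ were a rooted $\theta$-chain with chains $C_1,C_2$, then $C_1,C_2$ are nontrivial (as $G$ is simple), so $\del(\cl{C_i},e_{C_i})\le-\frac12$ by \ref{main1/2}, and Lemma~\ref{lem:baltighttheta}(ii) would give $\bdel(G,e)\le\frac32+\del(\cl{C_1},e_{C_1})+\del(\cl{C_2},e_{C_2})\le\frac12<1$. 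Hence (c) holds, producing $e'$ with $\{e,e'\}$ a $2$-edge-cut such that, for each subcubic chain $C$ of $G$ with end-edges $e,e'$, the pair $(G/C,e_{G/C})$ is a loop or a balanced tight rooted $\theta$-chain. Fix such a $C$. By Proposition~\ref{prop:contractchain}, $\del(G/C,e_{G/C})+\del(\cl C,e_C)=-1$ and $\bdel(G/C,e_{G/C})+\bdel(\cl C,e_C)=1$. Since $C$ is nontrivial, $\del(\cl C,e_C)\le-\frac12$; also $\del(G/C,e_{G/C})\le-\frac12$ in both cases (by \ref{main1/2}), and both rooted graphs satisfy $\del+\bdel\le0$ by \ref{maindelbdel}. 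The two identities then force $(\del(G/C,e_{G/C}),\bdel(G/C,e_{G/C}))=(-\frac12,\frac12)$, so by Lemma~\ref{lem:baltighttheta}(iii) the pair $(G/C,e_{G/C})$ is a loop or a minimal rooted $\theta$-chain, as required.

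\textbf{Backward implication.}
Assume the $2$-edge-cut condition and fix an edge $e$, the guaranteed $e'$, and a subcubic chain $C$ of $G$ with end-edges $e,e'$ (at least one exists since $\{e,e'\}$ is a $2$-edge-cut and $G$ is simple). Proposition~\ref{prop:contractchain} gives $\del(G,e)=\del(G/C,e_{G/C})+\del(\cl C,e_C)$ and the analogous identity for $\bdel$, and the hypothesis gives $(\del(G/C,e_{G/C}),\bdel(G/C,e_{G/C}))=(-\frac12,\frac12)$. The key point is that $(\del(\cl C,e_C),\bdel(\cl C,e_C))=(-\frac12,\frac12)$ as well: a short count of vertices shows that $\cl C$ is either a loop (precisely when $C-\{x,y\}$ is a single vertex) or coincides, as a rooted graph, with $(G/C',e_{G/C'})$ for the complementary subcubic chain $C'$ of $G$ with end-edges $e,e'$, which the hypothesis also covers; in either case the claimed value follows. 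Hence $(\del(G,e),\bdel(G,e))=(-1,1)$. Since $G$ has a $2$-edge-cut it is not $K_4$, so $G$ is a minimal $\theta$-chain by Proposition~\ref{prop:del1tight} and therefore $K_{2,3}$-constructible by Proposition~\ref{prop:minimalthetaK23-constructible}.

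\textbf{Main obstacle.}
The arithmetic with $\del$ and $\bdel$ is routine once Theorem~\ref{thm:main} is available; the one delicate point is the bookkeeping in the backward implication, namely checking that forming the closure $\cl C$ yields the same rooted graph as suppressing the complementary chain $C'$ (so that the edge $e_C$ created in the closure is identified with the edge $e_{G/C'}$ created in the suppression), and correctly isolating the degenerate case in which one side of the $2$-edge-cut is a single vertex, where $\cl C$ is already a loop and $C'$ need not exist. I expect this identification to be the main, though still elementary, difficulty; everything else is assembly of the cited results.
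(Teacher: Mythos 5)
Your proof is correct. It shares with the paper the central computation --- Proposition~\ref{prop:contractchain} splits $\del(G,e)$ and $\bdel(G,e)$ across the two sides of the $2$-edge-cut, and half-integrality together with $\del\le-\frac12$ and $\del+\bdel\le0$ pins both summands at $(-\frac12,\frac12)$ --- and your identification of $(\cl{C},e_C)$ with $(G/C',e_{G/C'})$ for the complementary chain $C'$ (with the single-vertex side giving a loop) is exactly the bookkeeping the paper relies on implicitly. The two arguments diverge in how they enter and exit this computation. Going from $K_{2,3}$-constructibility to the $2$-edge-cut condition, the paper obtains the partner edge $e'$ from a separate structural fact, proved by induction on the $\diamond$-operations, that the edge set of a $K_{2,3}$-constructible graph has a unique partition into $2$-edge-cuts; you instead obtain $e'$ from outcome (c) of \ref{main1} after eliminating (a), (b) and (d), which is precisely the argument of Proposition~\ref{prop:del1tight}. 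In the converse direction the paper is structural and immediate: by Proposition~\ref{prop:minimal_theta_characterization} each $(G/C,e_{G/C})$ is $K_4^-$-constructible with $e_{G/C}$ at the center, and reassembling the two halves along $\{e,e'\}$ gives a $K_{2,3}$-constructible graph; you instead deduce $(\del(G,e),\bdel(G,e))=(-1,1)$, exclude $K_4$ since it is $3$-edge-connected, and invoke Propositions~\ref{prop:del1tight} and~\ref{prop:minimalthetaK23-constructible}. Your route has the advantage of needing the hypothesis at only a single edge and of reusing the already-established extremal propositions wholesale; the paper's route is more constructive and yields the canonical $2$-edge-cut partition as a byproduct. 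Both are sound.
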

    
    \begin{proof}

    First, suppose for all $e \in E(G)$, $e$ belongs to a 2-edge-cut $\{e,e'\}$ such that suppressing either subcubic chain $C$ of $G$ with end edges $e,e'$ yields a loop or a minimal rooted $\theta$-chain $(G/C,e_{G/C})$. If  $(G/C,e_{G/C})$ is a minimal rooted $\theta$-chain, then by Proposition \ref{prop:minimal_theta_characterization}, it is $K_4^-$-constructible with $e_{G/C}$ corresponding to the center of $K_4^-$. Thus $G$ is $K_{2,3}$-constructible.
    
    Now let $G$ be $K_{2,3}$-constructible and let $e \in E(G)$. By Proposition \ref{prop:K_edge_cut}, there exists $e'\in E(G)$ such that $\{e,e'\}$ is a 2-edge-cut in $G$. 
    Let $C_1,C_2$ be the subcubic chains of $G$ with end-edges $e$ and $e'$. We show that $(G/C_i,e_{G/C_{i}})$, $i \in [2]$, are a minimal rooted $\theta$-chains. By Lemma \ref{lem:baltighttheta} and Theorem \ref{thm:main}, it is enough to show $(\del(G/C_i,e_{G/C_{i}}),\bdel(G/C_i,e_{G/C_{i}}) = (-\frac{1}{2},\frac{1}{2})$. 
    
    \ifx
    Given a pair of even covers $F_i \in \ec (G/C_i,e_{G/C_{i}})$, there is a corresponding even cover $F\in \ec (G,e)$ with $\exc(F) = \exc(F_1) + \exc(F_2) - 2$. As $n(G) = n(G/C_1) + n(G/C_2)$ and $n_2(G) = n_2(G/C_1) + n_2(G/C_2)$, we have
    
    \begin{align*}
        \exc(G,e) &= \min_{F \in \ec(G,e)} \exc(F) - 2\\
        &\le \min_{F_1 \in \ec(G/C_1,e_{G/C_1})} (\exc(F_1) - 2) + \min_{F_2 \in \ec(G/C_2,e_{G/C_2})} (\exc(F_2) - 2)\\
        &\le \exc(G/C_1,e_{G/C_1}) + \exc(G/C_2,e_{G/C_2})\\
        &= \frac{n(G/C_1) + n_2(G/C_1)}{4} + \del(G/C_1,e_{G/C_1}) + \frac{n(G/C_2) + n_2(G/C_2)}{4} + \del(G/C_2,e_{G/C_2})\\
        &= \frac{n(G) + n_2(G)}{4} + \del(G/C_1,e_{G/C_1}) + \del(G/C_2,e_{G/C_2}).
    \end{align*}
    In particular
    \fi
    By Proposition \ref{prop:contractchain},
    \begin{align*}
        -1 &= \del(G,e) = \del(G/C_1,e_{G/C_1}) + \del(G/C_2,e_{G/C_2}), \\
        1 &= \bdel(G,e) = \bdel(G/C_1,e_{G/C_1}) + \bdel(G/C_2,e_{G/C_2}).
    \end{align*}
    By Theorem \ref{thm:main}, we have $\del(G/C_i,e_{G/C_i}) \le -\frac{1}{2}$, $i \in [2]$. As $\del$ is always half integral, we have $\del(G/C_1,e_{G/C_1}) = \del(G/C_2,e_{G/C_2}) = -\frac{1}{2}$. 
    
    \ifx
    We now show $\bdel(G/C_1,e_{G/C_1}) = \bdel(G/C_2,e_{G/C_2}) = \frac{1}{2}$.  
    
    Note given a pair of even covers $F_i \in \bec (G/C_i,e_{G/C_{i}})$, there is a corresponding even cover $F$ not containing $e$ of $G$ such that $\exc(F) = \exc(F_1) + \exc(F_2)$. It follows,
        \begin{align*}
        \bminexc(G,e) &= \min_{F \in \bec(G,e)} \exc(F)\\
        &\le \min_{F_1 \in \bec(G/C_1,e_{G/C_1})} \exc(F_1) + \min_{F_2 \in \bec(G/C_2,e_{G/C_2})} \exc(F_2) - 2\\
        &\le \bminexc(G/C_1,e_{G/C_1}) + \bminexc(G/C_2,e_{G/C_2})\\
        &= \frac{n(G/C_1) + n_2(G/C_1)}{4} + \bdel(G/C_1,e_{G/C_1}) + \frac{n(G/C_2) + n_2(G/C_2)}{4} + \bdel(G/C_2,e_{G/C_2})\\
        &= \frac{n(G) + n_2(G)}{4} + \bdel(G/C_1,e_{G/C_1}) + \bdel(G/C_2,e_{G/C_2}).
    \end{align*}
    Hence,
    
    By Proposition \ref{prop:contractchain},
    \begin{align*}
    1 = \bdel(G,e) = \bdel(G/C_1,e_{G/C_1}) + \bdel(G/C_2,e_{G/C_2}).
    \end{align*}
    \fi
    
    By Theorem \ref{thm:main}, we have $\del(G/C_i,e_{G/C_i}) + \bdel(G/C_i,e_{G/C_i}) \le 0$ for $i \in [2]$. In particular, as $\del(G/C_i,e_{G/C_i}) = -\frac{1}{2}$, we have $\del(G/C_i,e_{G/C_i}) \le \frac{1}{2}$. As $\bdel$ are always half intergral, we have $\bdel(G/C_1,e_{G/C_1}) + \bdel(G/C_2,e_{G/C_2}) = \frac{1}{2}$. Thus we have shown that both $(G/C_i,e_{G/C_i})$ are minimal rooted $\theta$-chains for $i \in [2]$. 
    \end{proof}

    We now prove other direction of Proposition \ref{prop:k23}.
    
    \begin{prop}\label{prop:(-1,1)}
        Let $G$ be a 2-connected simple subcubic graph and $e\in E(G)$. Then $(\del(G,e),\bdel(G,e))=(-1,1)$ if and only if $G\cong K_4$ or $G$ is $K_{2,3}$-constructible. 
    \end{prop}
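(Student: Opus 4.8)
The plan is to obtain this statement by assembling three results already in hand: Proposition~\ref{prop:del1tight}, Proposition~\ref{prop:minimalthetaK23-constructible}, and Proposition~\ref{prop:k23}. The real content was established earlier (ultimately in Theorem~\ref{thm:main}), so the argument here should be short, and I do not anticipate a genuine obstacle—the only thing to check is that the hypotheses of each invoked result line up with what we assume, namely that $G$ is a simple $2$-connected subcubic graph and $e$ is an arbitrary edge of $G$.

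For the forward implication, I would assume $(\del(G,e),\bdel(G,e))=(-1,1)$. By Proposition~\ref{prop:del1tight}, this forces either $G\cong K_4$ or $G$ to be a minimal $\theta$-chain. In the first case we are immediately done. In the second case, $G$ is in addition simple, so Proposition~\ref{prop:minimalthetaK23-constructible} applies and yields that $G$ is $K_{2,3}$-constructible, which is exactly the desired conclusion.

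For the reverse implication, nothing new is needed: it is precisely Proposition~\ref{prop:k23}, which says that if $G\cong K_4$ or $G$ is $K_{2,3}$-constructible, then $(\del(G,e),\bdel(G,e))=(-1,1)$ for every edge $e\in E(G)$. Since both directions hold for an arbitrary choice of $e$, the proof is complete. If any step deserves attention it is simply verifying that ``minimal $\theta$-chain'' (the output of Proposition~\ref{prop:del1tight}) and ``$K_{2,3}$-constructible'' (the output of Proposition~\ref{prop:minimalthetaK23-constructible}) are being used with the same underlying graph $G$; this is automatic, so the statement follows.
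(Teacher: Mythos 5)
Your proof is correct and is essentially the paper's own argument: the forward direction via Proposition~\ref{prop:del1tight} followed by Proposition~\ref{prop:minimalthetaK23-constructible} (the simplicity hypothesis does carry over as you note), and the converse via Proposition~\ref{prop:k23}, is precisely the chain of implications the paper uses to establish this equivalence in the proof of Theorem~\ref{thm:extremalcharacterization}. No gaps.
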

    
    \begin{proof} If $G\cong K_4$ or $G$ is $K_{2,3}$-constructible then $(\del(G,e),\bdel(G,e))=(-1,1)$ by Proposition \ref{prop:k23}. So assume $(\del(G,e),\bdel(G,e))=(-1,1)$. Then by Theorem \ref{thm:main}, \ref{main1} must hold. 
    
    If $G\cong K_4$ then we are done. Since $G$ is simple, (b) of \ref{main1} cannot occur. Moreover, (d) of \ref{main1} does not hold; for otherwise, $(G,e)$ is a simple chain rooted at $e$, and by Lemma \ref{lem:baltighttheta}, $\bdel(G,e) \le \frac{3}{2} + \del(\cl{C_1},e_{C_1}) + \del(\cl{C_2},e_{C_2}) \le 1/2$, a contradiction.
    
    Thus (c) of \ref{main1} holds. So there exists $e'\in E(G)$ such that $\{e,e'\}$ is a 2-edge-cut in $G$ and suppressing either subcubic chain $C$ of $G$ with end edges $e,e'$ yields a loop or a balanced tight rooted $\theta$-chain $(G/C,e_{G/C})$. Let $C_1$ and $C_2$ be the two subcubic chains of $G$ with end edges $e, e'$.
    
    \ifx
    Let $F_1,F_2$ be a pair of even covers such that $F_i \in \ec(G/C_i,e_{G/C_i})$ for $i \in [2]$. Note we can modify $F_1$ and $F_2$ to an even cover $F\in \ec(G,e)$ with $\exc(F) = \exc(F_1) + \exc(F_2)$. Since $n(G) = n(G/C_1) + n(G/C_2)$ and $n_2(G) = n_2(G/C_1) + n_2(G/C_2)$, it follows
    
    \begin{align*}
        \exc(G,e) &= \min_{F \in \ec(G,e)} \exc(F) - 2\\
        &\le \min_{F_1 \in \ec(G/C_1,e_{G/C_1})} (\exc(F_1) - 2) + \min_{F_2 \in \ec(G/C_2,e_{G/C_2})} (\exc(F_2) - 2)\\
        &\le \exc(G/C_1,e_{G/C_1}) + \exc(G/C_2,e_{G/C_2})\\
        &= \frac{n(G/C_1) + n_2(G/C_1)}{4} + \del(G/C_1,e_{G/C_1}) + \frac{n(G/C_2) + n_2(G/C_2)}{4} + \del(G/C_2,e_{G/C_2})\\
        &= \frac{n(G) + n_2(G)}{4} + \del(G/C_1,e_{G/C_1}) + \del(G/C_2,e_{G/C_2}).
    \end{align*}
    \fi

    By Proposition \ref{prop:contractchain}, $\del(G,e) = \del(G/C_1,e_{G/C_1}) + \del(G/C_2,e_{G/C_2})$. Since $\del(G,e) = -1$ and $\del(G/C_1,e_{G/C_1}) \le -1/2$ for $i\in [2]$ (by Theorem \ref{thm:main}), 
    \begin{align*}
        -1 = \del(G,e) = \del(G/C_1,e_{G/C_1}) + \del(G/C_2,e_{G/C_2})\le -1.
    \end{align*}
    Thus $\del(G/C_1,e_{G/C_1}) = \del(G/C_2,e_{G/C_2}) = -\frac{1}{2}$. Moreover, since $(G/C_i,e_{G/C_i})$, $i\in [2]$, are balanced tight rooted $\theta$-chains, we have  $\bdel(G/C_1,e_{G/C_1}) = \bdel(G/C_2,e_{G/C_2}) = \frac{1}{2}$. Hence, by Proposition \ref{prop:K_equivalence}, $G$ is $K_{2,3}$-constructible.
     \end{proof}
    

\fi

    We thus have the following characterization of the extremal examples of Theorem \ref{thm:tspwalk}.
     
	\begin{theorem} \label{thm:extremalcharacterization}
	Let $G$ be a simple 2-connected subcubic graph. Then $\exc(G)\leq \frac{n(G)+n_2(G)}{4}+1$, with equality if and only if either $G\cong K_4$ or $G$ is $K_{2,3}$-constructible.
    \end{theorem}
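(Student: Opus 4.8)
The plan is to reduce the statement entirely to the parameters $\del(G,e),\bdel(G,e)$ and then string together the propositions already established. For the inequality $\exc(G)\le \frac{n(G)+n_2(G)}{4}+1$: since $G$ is simple, $G-e$ is simple for every edge $e\in E(G)$, so Theorem~\ref{thm:main} applies to $(G,e)$. Using the identity $\exc(G)=\frac{n(G)+n_2(G)}{4}+\min\{\del(G,e)+2,\bdel(G,e)\}$ recalled just before the statement, it suffices to show $\min\{\del(G,e)+2,\bdel(G,e)\}\le 1$ for one (equivalently any) edge $e$. By \ref{main1/2} we have $\del(G,e)\le-\frac12$; if $\del(G,e)\le -1$ then $\del(G,e)+2\le 1$, and if $\del(G,e)=-\frac12$ then \ref{maindelbdel} forces $\bdel(G,e)\le \frac12\le 1$. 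Either way the bound follows.

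For the equality case, the same identity shows $\exc(G)=\frac{n(G)+n_2(G)}{4}+1$ exactly when $\min\{\del(G,e)+2,\bdel(G,e)\}=1$; combining this with $\del(G,e)\le-\frac12$ and $\del(G,e)+\bdel(G,e)\le 0$ (from \ref{main1/2} and \ref{maindelbdel}) pins the pair down to $(\del(G,e),\bdel(G,e))=(-1,1)$, and since $\min\{\del(G,e)+2,\bdel(G,e)\}$ is independent of $e$, this holds for one edge iff for all edges; this is precisely the remark preceding the statement. Now Proposition~\ref{prop:del1tight} identifies the graphs with $(\del(G,e),\bdel(G,e))=(-1,1)$ as exactly $K_4$ and the minimal $\theta$-chains, so $\exc(G)=\frac{n(G)+n_2(G)}{4}+1$ if and only if $G\cong K_4$ or $G$ is a (necessarily simple) minimal $\theta$-chain.

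It then remains to match simple minimal $\theta$-chains with $K_{2,3}$-constructible graphs. One direction is Proposition~\ref{prop:minimalthetaK23-constructible}: a simple minimal $\theta$-chain is $K_{2,3}$-constructible. For the converse, Proposition~\ref{prop:k23} gives that a $K_{2,3}$-constructible graph $G$ satisfies $(\del(G,e),\bdel(G,e))=(-1,1)$, whence Proposition~\ref{prop:del1tight} again yields $G\cong K_4$ or $G$ is a minimal $\theta$-chain; and $G\not\cong K_4$ since a $\diamond$-operation changes the number of vertices by $+3$, so every $K_{2,3}$-constructible graph has $n(G)\equiv 2\pmod 3$ while $n(K_4)=4$. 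Assembling the chain of equivalences $\exc(G)=\frac{n(G)+n_2(G)}{4}+1\iff(\del(G,e),\bdel(G,e))=(-1,1)\iff G\cong K_4\text{ or }G\text{ a simple minimal }\theta\text{-chain}\iff G\cong K_4\text{ or }G\text{ is }K_{2,3}\text{-constructible}$ gives the theorem.

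At this level the argument is pure bookkeeping; the real content lies in the inputs, chiefly Proposition~\ref{prop:del1tight} (which unpacks outcome \ref{main1} of Theorem~\ref{thm:main}, ruling out the non-simple outcome~(b) and the rooted-$\theta$-chain outcome~(d) via Lemma~\ref{lem:baltighttheta}, and recursing on the $2$-edge-cut outcome~(c)) and Proposition~\ref{prop:minimalthetaK23-constructible} (which decomposes a minimal $\theta$-chain inductively into $K_4^-$-constructible chain-blocks through Proposition~\ref{prop:minimal_theta_characterization}). The only point needing a little care inside this proof itself is the passage between ``for some $e$'' and ``for all $e$'' when invoking Propositions~\ref{prop:del1tight} and~\ref{prop:k23}; it is handled by the $e$-independence of $\min\{\del(G,e)+2,\bdel(G,e)\}$ together with the inequalities $\del(G,e)\le-\frac12$ and $\del(G,e)+\bdel(G,e)\le0$, which force the pair $(\del(G,e),\bdel(G,e))$ whenever the minimum equals $1$.
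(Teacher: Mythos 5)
Your proposal is correct and follows essentially the same route as the paper: reduce to the condition $(\del(G,e),\bdel(G,e))=(-1,1)$ via the identity $\exc(G)=\frac{n(G)+n_2(G)}{4}+\min\{\del(G,e)+2,\bdel(G,e)\}$ together with \ref{main1/2} and \ref{maindelbdel}, then apply Propositions~\ref{prop:k23}, \ref{prop:del1tight}, and \ref{prop:minimalthetaK23-constructible}. Your explicit handling of the ``some $e$ versus all $e$'' point and the $n\equiv 2\pmod 3$ aside are harmless additions the paper leaves implicit or omits.
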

	\begin{proof}
	Let $e\in E(G)$. 
	If $\del(G,e)\leq -\frac{3}{2}$ or $\bdel(G,e)\leq \frac{1}{2}$, then $\exc(G)\leq \frac{n(G)+n_2(G)}{4}+\frac{1}{2}$.
	Otherwise, we have $(\del(G,e),\bdel(G,e))=(-1,1)$, or equivalently, $\exc(G) = \frac{n(G)+n_2(G)}{4}+1$.
	Now if $G \cong K_4$ or $G$ is $K_{2,3}$-constructible, then $(\del(G,e),\bdel(G,e))=(-1,1)$ by Proposition \ref{prop:k23}.
	Conversely, if $(\del(G,e),\bdel(G,e))=(-1,1)$, then by Propositions \ref{prop:del1tight} and \ref{prop:minimalthetaK23-constructible}, either $G\cong K_4$ or $G$ is $K_{2,3}$-constructible.
	\end{proof}
   
   \section{Algorithm}\label{sec:algo}
	
	We now provide an algorithm for finding a TSP walk of length at most $\frac{5n(G)+n_2(G)}{4} - 1$ in any simple 2-connected subcubic graph $G$. This is achieved by following the proof of Theorem \ref{thm:main} to construct an even cover $F$ of $G$ with $\exc(F)\leq \frac{n(G)+n_2(G)}{4}  + 1$. As noted by Dvo\v r\'ak et al. \cite{DKM17}, modifying this even cover to our desired TSP walk takes linear time.
	
	In the proof of Theorem \ref{thm:main}, we often have a choice of routing a cycle through certain subcubic chains and not through others. For each such chain $C$, we ``save'' $\del(\cl C,e_C)$ by going through $C$ and incur a ``cost'' $\bdel(\cl{C},e_C)$ by not going through $C$. The key idea of Theorem \ref{thm:main} is that these costs and savings are (at worst) balanced, i.e. $\del (\cl C,e_C) + \bdel(\cl C,e_C) \le 0$. 
	Of course, for a given subcubic graph $G$ and an edge $e$, we cannot efficiently compute $\del(G,e)$ and $\bdel(G,e)$ exactly (unless P=NP).
	Instead, we compute ``worst-case'' estimates
	$$(\Del(G,e),\BDel(G,e))\in \left\{(- \tfrac 12, \tfrac 12),\ (-1, 1),\  (-\tfrac 32,\tfrac 32)\right\} $$ 
	such that $(\del(G,e),\bdel(G,e)) \leq (\Del(G,e),\BDel(G,e))$ (coordinate-wise).
	
	The natural approach would be to determine exactly when $\del(G,e)=-\frac12$ or $\bdel(G,e)=\frac32$ using our characterization of the extremal examples in Theorem \ref{thm:main}, and assign $(\Del(G,e),\BDel(G,e)) = (-\frac12,\frac12)$ or $(-\frac32,\frac32)$ respectively (and assign $(-1,1)$ in all other cases).
	To check whether $(G,e)$ is a minimal rooted $\theta$-chain (for example), we would need to first check that it is a rooted $\theta$-chain (which takes linear time) and then recursively check that each of its two chains are also minimal, taking quadratic time overall.
	This approach would result in a cubic algorithm to produce the desired even covers.
	
	It turns out that a much simpler linear-time estimate is sufficient, and yields a quadratic-time algorithm to find the desired even covers.
    Indeed, by Lemma \ref{lem:baltighttheta}, if $(G,e)$ is a rooted $\theta$-chain (regardless of whether it is tight or balanced), then we have $(\del(G,e),\bdel(G,e))\leq (-\frac12,\frac12)$.
    And by Lemma \ref{lem:minimaltheta2}, if $G-e$ is simple and 2-connected and $(G_u,f_u)$ is a rooted $\theta$-chain (where $G_u$ is obtained from $G-e$ by suppressing an endpoint $u$ to an edge $f_u$), then we have $(\del(G,e),\bdel(G,e))\leq(-\frac32,\frac32)$.

    We thus define an algorithm $\scan(G,e)$ to estimate $(\del(G,e),\bdel(G,e))$ as follows.
    If $G$ is a loop or $G-e$ is 2-connected, $\scan(G,e)$ will assign 
    \begin{align*}
        (\Del(G,e),\BDel(G,e)) = 
        \left\{
	\begin{array}{ll}
		(-\frac12,\frac12)  & \mbox{if $(G,e)$ is a loop or a rooted $\theta$-chain,}\\
		(-\frac32,\frac32) & \mbox{if $(G_u,f_u)$ is a rooted $\theta$-chain,} \\
		(-1,1) &\mbox{otherwise.}
	\end{array}
\right.
    \end{align*}
    If $G-e$ is not 2-connected (and it is not a loop), then $(G,e)$ can be written as the closure $(\cl C,e_C)$ of a subcubic chain $C=xe_0B_1e_1 \cdots e_{k-1}B_ke_ky$ such that $k\geq 2$ (if $k=1$, then $G-e = \cl C-e_C$ is 2-connected or an isolated vertex).
    In this case, our estimate on $(G,e)$ will be the sum of the estimates of the chain-blocks $(\cl{B_i},e_{B_i})$ of $C$:
    \begin{align*}
    (\Del(G,e),\BDel(G,e)) =\sum_{i=1}^k (\Del(\cl{B_i},e_{B_i}), \BDel(\cl{B_i},e_{B_i})).
    \end{align*}

	For the remainder of this section, given a 2-connected subcubic graph $G$ and an edge $e=uv\in E(G)$ such that $G-e$ is simple and has no cut-vertex, we let $u_1,u_2$ denote the two neighbors of $u$ not equal to $v$, and denote by $G_u$ the graph obtained by deleting $e$ and suppressing $u$ to an edge $f_u=u_1u_2$.
	Note that computing $G_u$ and $f_u$ takes constant time.
	To resolve ambiguities in the choice of the vertex $u$ in the edge $e=uv$ (in the case where $\BDel(G,e)=\frac32$), we fix a linear ordering $\le$ of the vertices throughout, and assume that $u\leq v$.

	\begin{prop}
	Let $G$ be a subcubic graph and let $e = uv \in E(G)$ such that $G - e$ is simple. Then $\del(G,e) \le \Del(G,e)$ and $\bdel(G,e) \le \BDel(G,e)$. 
	\end{prop}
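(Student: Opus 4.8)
The plan is to induct on $n(G)$, mirroring the recursive definition of $\scan$, and to invoke Theorem \ref{thm:main} together with the lemmas of Section \ref{sec:thetachains} (which now hold unconditionally, since Theorem \ref{thm:main} has been established).

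First I would dispose of the degenerate inputs. If $G$ is a loop, then $n(G)=n_2(G)=1$, and directly $\minexc(G,e)=0$ and $\bminexc(G,e)=1$, so $(\del(G,e),\bdel(G,e))=(-\frac12,\frac12)=(\Del(G,e),\BDel(G,e))$. If $G$ is not simple but not a loop, then $e$ has a unique parallel edge $e^\ast$, the graph $G-e-e^\ast$ is a nontrivial subcubic chain with endpoints $\{u,v\}$, and suppressing $\{u,v\}$ produces an edge $e'$ in a simple graph $G'$ (possibly a loop). Sorting the even covers of $G$ according to which of $e$, $e^\ast$ they contain, one gets $\del(G,e)=-\frac12+\min\{\del(G',e'),\bdel(G',e')\}$ and $\bdel(G,e)=\frac32+\min\{\del(G',e'),\bdel(G',e')\}$; since $\del(G',e')\le-\frac12$ by \ref{main1/2}, this forces $\del(G,e)\le-1$ and $\bdel(G,e)\le1$, matching the value $(-1,1)$ assigned by $\scan$ (neither clause of its first branch fires, because $G$ is not simple and $G_u$ is undefined — $u$ having only one neighbor distinct from $v$). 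So henceforth $G$ is simple.

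If $G-e$ is not $2$-connected, then $(G,e)=(\cl C,e_C)$ for a subcubic chain $C$ with chain-blocks $(\cl{B_1},\cl{e_1}),\dots,(\cl{B_k},\cl{e_k})$, $k\ge 2$, each $\cl{B_i}$ being a loop or a $2$-connected subcubic graph with $\cl{B_i}-\cl{e_i}$ simple and $n(\cl{B_i})<n(G)$. Then $\scan$ sets $(\Del(G,e),\BDel(G,e))=\sum_i(\Del(\cl{B_i},\cl{e_i}),\BDel(\cl{B_i},\cl{e_i}))$, Proposition \ref{prop:subcubicchains} gives $(\del(G,e),\bdel(G,e))=\sum_i(\del(\cl{B_i},\cl{e_i}),\bdel(\cl{B_i},\cl{e_i}))$, and applying the inductive hypothesis to each block and summing finishes this case. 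If $G-e$ is $2$-connected, then $\scan$ returns $(-\frac12,\frac12)$ when $(G,e)$ is a rooted $\theta$-chain — then, as $G$ is simple, a \emph{simple} one, so Lemma \ref{lem:baltighttheta}(i),(ii) give $(\del(G,e),\bdel(G,e))\le(-\frac12,\frac12)$ — it returns $(-\frac32,\frac32)$ when instead $(G_u,f_u)$ is a rooted $\theta$-chain, so Lemma \ref{lem:minimaltheta2}(i),(ii) give $(\del(G,e),\bdel(G,e))\le(-\frac32,\frac32)$, and it returns $(-1,1)$ otherwise. In this last case $(G,e)$ is neither a loop nor a balanced tight rooted $\theta$-chain, so $\del(G,e)<-\frac12$ by \ref{main1/2}, hence $\del(G,e)\le-1$ by half-integrality; and equality $\bdel(G,e)=\frac32$ would force $(G_u,f_u)$ to be a minimal rooted $\theta$-chain by Lemma \ref{lem:extremalG-e}, hence in particular a rooted $\theta$-chain, contrary to the case we are in, so $\bdel(G,e)<\frac32$ and thus $\bdel(G,e)\le1$.

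The heart of the argument is this final subcase: $\scan$ gains its speed by substituting the linear-time tests ``$(G,e)$ is a rooted $\theta$-chain'' and ``$(G_u,f_u)$ is a rooted $\theta$-chain'' for the exact conditions $\del(G,e)=-\frac12$ and $\bdel(G,e)=\frac32$ (which we cannot check efficiently), and the real work is to verify that these tests do not undercount — that $\bdel(G,e)=\frac32$ genuinely entails $(G_u,f_u)$ is a rooted $\theta$-chain. This is exactly the content of Lemma \ref{lem:extremalG-e} (equivalently Lemma \ref{lem:minimaltheta2}(i)), so once that is cited the step is immediate; the only other awkward point, which is why I peel it off at the outset, is the non-simple input, where $G_u$ is undefined and Lemma \ref{lem:baltighttheta} does not literally apply.
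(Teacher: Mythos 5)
Your proof is correct and takes essentially the same route as the paper's: the chain-block decomposition via Proposition \ref{prop:subcubicchains} plus induction when $G-e$ has a cut-vertex, and Lemma \ref{lem:baltighttheta}, Lemma \ref{lem:minimaltheta2}, Theorem \ref{thm:main}\ref{main1/2}, and Lemma \ref{lem:extremalG-e} for the three branches when $G-e$ is 2-connected. Your explicit handling of the parallel-edge input --- where $(G,e)$ is literally a rooted $\theta$-chain yet $\scan$'s test ``$G-\{u,v\}$ disconnected'' does not fire, and where the bound $(-\frac12,\frac12)$ would in fact be false while $(-1,1)$ holds --- is a corner case (arising e.g.\ for closures $(\cl{C},e_C)$ with $e_C$ parallel to an edge of $C$) that the paper's proof passes over silently, and your verification of it is a genuine improvement in rigor rather than a change of method.
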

	
	\begin{proof}
	First suppose $G$ is a loop or $G - e$ is 2-connected.
	If $(G,e)$ is a loop or a rooted $\theta$-chain, then by Lemma \ref{lem:baltighttheta}, $(\del(G,e),\bdel(G,e))\leq (-\frac12,\frac12) = (\Del(G,e),\BDel(G,e))$.
	If $(G_u,f_u)$ is a rooted $\theta$-chain, then by Lemma \ref{lem:minimaltheta2}, $(\del(G,e),\bdel(G,e))\leq (-\frac32,\frac32) = (\Del(G,e),\BDel(G,e))$.
	Otherwise, by Theorem \ref{thm:main}, we have $(\del(G,e),\bdel(G,e))\leq (-1,1) = (\Del(G,e),\BDel(G,e))$.
	
	Now suppose $G-e$ is not 2-connected.
	Then we can write $(G,e)$ as the closure $(\cl C,e_C)$ of a subcubic chain $C=xe_0B_1e_1 \cdots e_{k-1}B_ke_ky$ where $k\geq 2$.
	By Proposition \ref{prop:subcubicchains} and by induction, we have
	\begin{align*}
	    (\del(\cl C,e_C),\bdel(\cl C,e_C)) = \sum_{i=1}^k (\del(\cl{B_i},e_{B_i}), \bdel(\cl{B_i},e_{B_i})) \leq \sum_{i=1}^k (\Del(\cl{B_i},e_{B_i}), \BDel(\cl{B_i},e_{B_i})) = (\Del(G,e),\BDel(G,e)).
	\end{align*}
	\ifx
	First, suppose $(\Del(G,e),\BDel(G,e))=(-\frac{1}{2},\frac{1}{2})$, i.e. $(G,e) \in {\cal D}$. 
	By Lemma \ref{lem:baltighttheta}, $\del(G,e)\leq -\frac{1}{2} = \Del(G,e)$ and $\bdel(G,e) \le \frac{1}{2} = \BDel(G,e)$. Next, let $G_u$ be obtained from $G - e$ by suppressing $u$ (an end point of $e$) to an edge $f_u$. Suppose $(\Del(G,e),\BDel(G,e))=(-\frac{3}{2},\frac{3}{2})$. Then $(G,e) \in {\cal U}$; so $(G_u,f_u)$ is a rooted $\theta$-chain. 
	By Lemma \ref{lem:minimaltheta2}, we have $\del(G,e)\leq -\frac{3}{2}=\Del(G,e)$ and $\bdel(G,e)\leq \frac{3}{2}=\BDel(G,e)$. Finally, suppose $(\Del(G,e),\BDel(G,e))=(-1,1)$, i.e. $(G,e) \not \in {\cal U} \cup {\cal D}$. As $(G,e) \not \in {\cal D}$, $(G,e)$ is not a rooted $\theta$-chain; so  $\del(G,e) \le -1 = \Del(G,e)$ by Theorem \ref{thm:main}. As $(G,e) \not \in {\cal U}$, $(G_u,f_u)$ is not a rooted $\theta$-chain;  thus $\bdel(G,e) \le 1 = \BDel(G,e)$ by Lemma \ref{lem:extremalG-e}.
	\fi
	\end{proof}
    
    Checking whether $(G,e)$ is a rooted $\theta$-chain is equivalent to checking whether $G-\{u,v\}$ is disconnected, which can be done in linear time.
    More generally, we can determine the block structure of graphs with a depth first search (DFS) in $O(n(G)+|E(G)|)$ time (e.g. \cite{cormen2009introduction}), which is $O(n(G))$ when $G$ is subcubic.

    \begin{algorithm}[H]
    \small 
    \SetKwInOut{Input}{Input}
    \SetKwInOut{Output}{Output}

    \Input{A loop or a  2-connected subcubic graph $G$ and $e = uv \in E(G)$ such that $G - e$ is simple} 
    \Output{A half integral vector $(\Del(G,e),\BDel(G,e)) \in \{(-\frac12,\frac12), (-1,1), (-\frac32,\frac32)\}$.}
    \uIf{$G - e$ has a cut-vertex}{
    Write $(G,e)$ as the closure $(\cl C,e_C)$ of a subcubic chain $C=xe_0B_1e_1 \cdots e_{k-1}B_ke_ky$\;
  \Return{$\sum_{i = 1}^k \scan(\cl{B_i},e_{B_i})$}\;}
    \uIf{$G - \{u,v\}$ is disconnected or $G$ is a loop}{\Return{ $(-\frac{1}{2},\frac{1}{2})$}\;}
    \uElseIf{$G_u-\{u_1,u_2\}$ is disconnected}{\Return{$(-\frac{3}{2},\frac{3}{2})$}\;
     }
     \uElse{
     \Return{$(-1,1)$}\;}
     \caption{\label{algo:scan} $\scan(G,e)$}
    \end{algorithm}

	\begin{prop}\label{prop:scan}
	$\scan(G,e)$ can be computed in $O(n(G))$ time.
	\end{prop}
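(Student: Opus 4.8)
The plan is to prove by induction on $n(G)$ that $\scan(G,e)$ terminates in $O(n(G))$ time, by carefully accounting for the work done in each branch of Algorithm \ref{algo:scan}. First I would observe that the non-recursive parts of the algorithm — deciding whether $G-e$ has a cut-vertex, whether $G-\{u,v\}$ is disconnected, constructing $G_u$ and $f_u$, and whether $G_u-\{u_1,u_2\}$ is disconnected — can each be carried out in $O(n(G)+|E(G)|)=O(n(G))$ time, since $G$ is subcubic so $|E(G)|\leq \tfrac32 n(G)$. Indeed, testing 2-connectivity and finding cut-vertices is a standard DFS-based computation (e.g.\ \cite{cormen2009introduction}), and the block-cut-tree that such a DFS produces simultaneously exhibits the decomposition of $(G,e)$ into the closure of a subcubic chain $C=xe_0B_1e_1\cdots e_{k-1}B_ke_ky$: the $2$-connected blocks and bridges of $G-e$ lying on the path between the two endpoints of $e$ in the block-cut-tree are exactly the pieces $B_1,\dots,B_k$ (together with the edges $e_0,\dots,e_k$), and each chain-block closure $(\cl{B_i},e_{B_i})$ is formed in time proportional to $n(B_i)$. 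Thus the total bookkeeping outside the recursive calls is $O(n(G))$.

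Next I would handle the recursion. When $G-e$ has a cut-vertex, $\scan(G,e)$ makes one recursive call $\scan(\cl{B_i},e_{B_i})$ for each $i\in[k]$ and sums the results. The crucial point is that the graphs $\cl{B_i}$ are essentially vertex-disjoint: we have $n(\cl{B_i}) = n(B_i)$ for each $i$ and $\sum_{i=1}^k n(B_i) \le n(G)$ (the endpoints $x,y$ of the chain are deleted in passing to $\cl C$, and the internal cut-vertices of the chain are each shared by at most two consecutive blocks, but one can charge each such vertex once and absorb the constant-factor overhead). Since $k\ge 2$ in this branch, each $\cl{B_i}$ is strictly smaller than $G$, so the inductive hypothesis applies: $\scan(\cl{B_i},e_{B_i})$ runs in time $c\cdot n(\cl{B_i})$ for a suitable constant $c$. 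Adding the $O(n(G))$ cost of computing the chain decomposition and summing $k\le n(G)$ half-integral vectors, the total is at most $c\sum_{i=1}^k n(\cl{B_i}) + O(n(G)) \le c\cdot n(G) + O(n(G))$, which is $O(n(G))$ after adjusting $c$. In the non-recursive branches (where $G$ is a loop, or $G-e$ is $2$-connected), the algorithm returns immediately after the $O(n(G))$-time tests described above, so the bound is trivial.

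The main obstacle — and really the only subtlety — is making the recursion accounting airtight: I must verify that the chain decomposition genuinely has $k\ge 2$ whenever we recurse (otherwise a call could recurse on a graph of the same size and loop forever), and that the total size of the subproblems does not exceed $n(G)$ up to a constant factor despite the shared chain-endpoints and cut-vertices. The first is exactly the remark in the text preceding the algorithm: if $k=1$ then $G-e = \cl C - e_C$ is $2$-connected (or a single vertex), so $G-e$ has no cut-vertex, and we would not be in the recursive branch. The second follows from the block-cut-tree structure: distinct blocks $B_i$ of $G-e$ share at most cut-vertices, and along a path in the block-cut-tree each cut-vertex is shared by exactly two consecutive blocks; charging each vertex of $G$ to the lowest-indexed block containing it gives $\sum_i (n(B_i)-1) \le n(G)$, and the extra $+1$ per block is absorbed since $k \le n(G)$. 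Once these two facts are in hand, the induction closes cleanly and yields the claimed $O(n(G))$ running time.
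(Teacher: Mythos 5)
Your non-recursive accounting is fine, and you correctly identify that $k\ge 2$ is what guarantees the recursion makes progress. The problem is the inductive step for the running time itself. You assume $\scan(\cl{B_i},e_{B_i})$ runs in time $c\cdot n(\cl{B_i})$, sum over $i$ to get $c\sum_i n(\cl{B_i}) + O(n(G)) \le c\cdot n(G) + d\cdot n(G)$, and then say this ``is $O(n(G))$ after adjusting $c$.'' That adjustment is not legitimate inside an induction: if the constant in the hypothesis is $c$, the bound you derive for the size-$n$ instance is $(c+d)n$, not $cn$, so the induction does not close. In general a recurrence of the form $T(n)\le \sum_i T(n_i) + dn$ with $\sum_i n_i \le n$ and each $n_i<n$ only yields $T(n)=O(n\cdot\textup{depth})$; with subproblem sizes summing to essentially $n(G)$ (as you yourself note, the $\cl{B_i}$ are ``essentially vertex-disjoint''), a deep recursion could in principle cost $\Theta(n^2)$. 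So as written the argument has a genuine gap.

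The missing ingredient, which is the heart of the paper's proof, is that the recursion has depth at most one: for each chain-block, $\cl{B_i}-e_{B_i}=B_i$ is by definition either $2$-connected or a single vertex, so the recursive call $\scan(\cl{B_i},e_{B_i})$ never enters the branch on lines 1--3 and returns after at most three depth-first searches on an input of size $n(\cl{B_i})$. The total cost is therefore one DFS on $G$ plus $\sum_{i=1}^k O(n(\cl{B_i})) = O(n(G))$, with no compounding of constants. Once you add this observation (or, equivalently, restate your induction as a claim with an explicit depth-dependent constant and then bound the depth by $1$), your argument goes through.
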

	
	\begin{proof}
	If $\scan(G,e)$ returns on lines 5, 7, or 9, then it performs at most three depth first searches, thus requiring $O(n(G))$ time. Now suppose $\scan(G,e)$ returns on line 3; that is, $(G,e)$ is the closure of a subcubic chain $C=xe_0B_1e_1 \cdots e_{k-1}B_ke_ky$ where $k\geq 2$. 
	For all $i \in [k]$, $\cl{B_i}-e_{B_i}$ is either 2-connected or a single vertex, so $\scan(\cl{B_i},e_{B_i})$ will not execute line 2. Thus $\scan(G,e)$ requires a depth first search on an input of size $n(G)$ on line 1 and at most three depth first searches for each $\cl{B_i}$, $i \in [k]$. As $\sum_{i = 1}^{k} n(\cl{B_i}) < n(G)$, we have that in all cases, $\scan(G,e)$ requires $O(n(G))$ time.
	\end{proof}
	
	We will define two algorithms $\alg(G,e)$ and $\balg(G,e)$ which will return an even cover $F$ in $\ec(G,e)$ and $\bec(G,e)$ respectively such that $\exc(F)\leq \frac{n(G)+n_2(G)}{4}+\Del(G,e)+2$ and $\exc(F)\leq \frac{n(G)+n_2(G)}{4}+\BDel(G,e)$ respectively.
    For convenience, we wrap these two algorithms in a main algorithm $\Algo$ with preprocessing to handle the base case (where $(G,e)$ is a loop) and the case where $G-e$ is not 2-connected.

\begin{algorithm}[H]
 \small 
\SetKwInOut{Input}{Input}
\SetKwInOut{Output}{Output}
  \Input{A loop or a 2-connected subcubic graph $G$ and $e \in E(G)$ such that $G-e$ is simple, and a binary input flag}
  \Output{$F\in \ec(G,e)$ such that $\exc(F) \le \frac{n(G) + n_2(G)}{4} + \Del(G,e) + 2$ (if $\flag == \true$) or $F \in \bec(G,e)$ such that $\exc(F) \le \frac{n(G) + n_2(G)}{4} + \BDel(G,e)$ (if $\flag == \false)$}
  \uIf{$G$ is a loop}{\uIf{$\flag == \true$}{\Return{$F = G$}\;}
   \uElse{\Return{$F = G-e$}\;}
}
  \uIf{$G-e$ is not 2-connected}{
  Write $(G,e)$ as the closure $(\cl C,e_C)$ of a subcubic chain $C = xe_0B_1e_1B_2 \ldots e_{k-1}B_ke_ky$\;
  Let $F_i = \Algo(\cl{B_i},e_{B_i},\flag)$ for all $i \in [k]$\; 
  \uIf{$\flag == true$}{\Return{$F = \bigcup_{i = 1}^{k}(F_i - e_{B_i}) + e + \{e_i:i\in[k-1]\}$}\;
  }
  \uElse{
  \Return{$F = \bigcup_{i = 1}^k F_i$}\;}
  }
  Let $(\Del,\BDel) = \scan(G,e)$\;
  \uIf{$\flag == \true$}{\Return{$F = \alg(G,e,\Del)$}\;}
  \uElse{  \Return{$F = \balg(G,e)$\;
  }
  }
  \caption{\label{algo:main} $\Algo(G,e,\flag)$}
\end{algorithm}
    For the remainder of the section, we let $\falg:\mbn\to\mbn$ denote a superadditive function (i.e. $\falg(n_1)+\falg(n_2)\leq \falg(n_1+n_2)$ for all $n_1,n_2\in\mbn$) such that $\Algo(G,e,\flag)$ takes at most $\falg(n)$ steps on inputs of size at most $n$.
    We will show in the end that we can take $\falg(n)=O(n^2)$.

    We now give the algorithm $\balg(G,e)$ used in line 17 of $\Algo(G,e,\flag)$, which produces an even cover $F\in \bec(G,e)$ with $\exc(F)\leq \frac{n(G)+n_2(G)}{4}+\BDel(G,e)$. 
    Recall that $(G_u,f_u)$ is obtained from $G$ and $e=uv$ by deleting $e$ and suppressing $u$ to an edge $f_u=u_1u_2$.
    
      \begin{algorithm}[H]
    \small
    \SetKwInOut{Input}{Input}
    \SetKwInOut{Output}{Output}
        
  \Input{A subcubic graph $G$ and $e =uv\in E(G)$ such that $G - e$ is simple and 2-connected}
  \Output{An even cover $F\in \bec(G,e)$ with $\exc(F) \le \frac{n(G) + n_2(G)}{4} + \BDel(G,e)$ where $\BDel(G,e) = \scan(G,e)_2$}
    Let $F' = \Algo(G_u,f_u,\true)$\;
    \Return{$F = (F' - f_u) + \{u\} + \{u_1u,uu_2\}$}\;
      \caption{\label{algo:bec} $\balg(G,e)$}
    \end{algorithm}
    
    \begin{prop}\label{prop:algo_bec_3/2}
	Suppose $\Algo$ is correct on inputs of size less than $n$. Then $\balg$ is correct and takes $\falg(n-1)+O(1)$ time for all inputs of size less than or equal to $n$. 
	\end{prop}
	
	\begin{proof}
	We clearly have $F \in \bec(G,e)$.
	We claim that $\Del(G_u,f_u)+2 \leq \BDel(G,e)$.
	If $\BDel(G,e)=\frac32$, there is nothing to prove (since $\Del\leq -\frac12$).
	If $\BDel(G,e)=1$, then $(G_u,f_u)$ is not a rooted $\theta$-chain, so $\Del(G_u,f_u)\leq -1$.
	Finally, suppose $\BDel(G,e) = \frac{1}{2}$. Then $(G,e)$ is a rooted $\theta$-chain. This implies that $(G_u,f_u)$ is the closure $(\cl C,e_C)$ of a subcubic chain $C$ with at least three blocks, so $\Del(G_u,f_u) = \Del(\cl C,e_C) \le -\frac{3}{2}$. 
	It follows that 
	\begin{align*}
	    \exc(F) = \exc(F') \le \frac{n(G) + n_2(G)}{4} + \Del(G_u,f_u) + 2 \le \frac{n(G) + n_2(G)}{4} + \BDel(G,e).
	\end{align*}
	
	For the time complexity, note that $\Algo$ is called only once on $(G_u,f_u)$, which takes $\falg(n(G_u))=\falg(n-1)$ time. The remaining lines require constant time, thus $\balg$ runs in $\falg(n-1)+O(1)$ time.
	\end{proof}

    We now give the algorithm $\alg(G,e,\Del)$ in line 15 of $\Algo$, which produces an even cover $F \in \ec(G,e)$ such that $\exc(F)\leq \frac{n(G)+n_2(G)}{4}+\Del(G,e)+2$. For clarity of presentation, we split the algorithm into three cases depending on the value $\Del$. We first describe the case $\Del = -\frac{1}{2}$.
    
    \begin{algorithm}[H]
	    \small 
        \SetKwInOut{Input}{Input}
        \SetKwInOut{Output}{Output}
        
        \Input{A subcubic graph $G$ and $e=uv \in E(G)$ such that $G - e$ is simple and 2-connected, and $\Del(G,e)=-\frac{1}{2}$ (i.e. $(G,e)$ is a rooted $\theta$-chain)}
        \Output{An even cover $F\in \ec(G,e)$ with $\exc(F) \le \frac{n(G) + n_2(G)}{4} + \frac{3}{2}$}
        
        Determine the subcubic chains $C_1$ and $C_2$ of $(G,e)$ with a DFS\;
        Let $(\Del(C_1),\BDel(C_1)) = \scan(\cl{C_1},e_{C_1})$ and
        let $(\Del(C_2),\BDel(C_2)) = \scan(\cl{C_2},e_{C_2})$\;
        Relabel if necessary so that $\Del(C_1) + \BDel(C_2) \le 0$\;
        Let $F_1 = \Algo(\cl{C_1},e_{C_1},\true)$ and $F_2 = \Algo(\cl{C_2},e_{C_2},\false)$\;
        Let $v'$ be the neighbor of $v$ in $C_1$ and let $u'$ be the neighbor of $u$ in $C_1$\;
        \Return{$F = (F_1 - e_{C_1})\cup F_2 +\{u,v\} +\{u'u,uv,vv'\}$}\;
        \caption{\label{algo:ec_1/2} $\alg(G,e,-\frac{1}{2})$}
	\end{algorithm}
    
    \begin{prop}\label{prop:ec_1/2}
    Suppose $\Algo$ is correct on inputs of size less than $n=n(G)$. Then $\alg(G,e,-\frac12)$ is correct and takes $\falg(n-1)+O(n)$ time for all input graphs of size less than or equal to $n$.
    \end{prop}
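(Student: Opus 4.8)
The plan is to follow Algorithm~\ref{algo:ec_1/2} line by line, establishing that the returned $F$ lies in $\ec(G,e)$ with $\exc(F)\le \tfrac{n(G)+n_2(G)}{4}+\tfrac32$, and then to bound the running time by a telescoping argument.

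First I would record the structural facts that make the construction legitimate. Since $(G,e)$ is a rooted $\theta$-chain with $e=uv$, the graph $G-e$ is the internally disjoint union of the two chains $C_1,C_2$ with common endpoints $\{u,v\}$ found on line~1; because $G-e$ is simple, neither chain is the single edge $uv$, so each $\cl{C_i}$ has fewer than $n(G)$ vertices and the recursive calls on line~4 act on strictly smaller inputs (one or both of $\cl{C_i}$ may be a loop, which $\Algo$ handles in its base case). Moreover $u$ and $v$ have degree $3$ in $G$, each being incident with $e$ and with one end edge of each chain, and the endpoints $u',v'$ of $e_{C_1}$ have the same degree in $\cl{C_1}$ as in $G$; hence $n(G)=n(\cl{C_1})+n(\cl{C_2})+2$ and $n_2(G)=n_2(\cl{C_1})+n_2(\cl{C_2})$. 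I would also note that the relabeling on line~3 is always available: $\scan$ always returns a pair with first coordinate the negative of the second, so $\bigl(\Del(C_1)+\BDel(C_2)\bigr)+\bigl(\Del(C_2)+\BDel(C_1)\bigr)=\bigl(\Del(C_1)+\BDel(C_1)\bigr)+\bigl(\Del(C_2)+\BDel(C_2)\bigr)=0$, so at least one of the two sums is nonpositive.

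Next I would verify correctness of the output. By the inductive correctness of $\Algo$, the cover $F_1\in\ec(\cl{C_1},e_{C_1})$ has a cycle $D_1$ through $e_{C_1}=u'v'$, and $F_2\in\bec(\cl{C_2},e_{C_2})$. Deleting $e_{C_1}$ turns $D_1$ into a $u'$--$v'$ path (or, when $D_1$ is the loop $e_{C_1}$ and $u'=v'$, into the single vertex $u'$); attaching the path $u'uvv'$ through the new vertices $u,v$ and the edges $u'u,uv,vv'$ of $G$ re-closes it into one cycle containing $e$, while leaving the remaining components of $F_1$ unchanged, so $\exc\bigl((F_1-e_{C_1})+\{u,v\}+\{u'u,uv,vv'\}\bigr)=\exc(F_1)$. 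Since $\cl{C_1}$ and $\cl{C_2}$ are vertex-disjoint, their vertex sets together with $\{u,v\}$ exhaust $V(G)$, and $u,v$ occur in neither $F_1$ nor $F_2$, the returned $F$ is a well-defined even cover of $G$ with $e\in F$ and $\exc(F)=\exc(F_1)+\exc(F_2)$. Substituting the recursive bounds (with $\Del(C_i),\BDel(C_i)$ the quantities computed on line~2) and the identities above,
\begin{align*}
\exc(F) &\le \Bigl(\tfrac{n(\cl{C_1})+n_2(\cl{C_1})}{4}+\Del(C_1)+2\Bigr)+\Bigl(\tfrac{n(\cl{C_2})+n_2(\cl{C_2})}{4}+\BDel(C_2)\Bigr)\\
&= \tfrac{n(G)+n_2(G)}{4}+\tfrac32+\bigl(\Del(C_1)+\BDel(C_2)\bigr)\ \le\ \tfrac{n(G)+n_2(G)}{4}+\tfrac32,
\end{align*}
the last step using the choice of labeling on line~3.

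Finally I would bound the running time. Line~1 is a depth first search on $G$, taking $O(n)$; line~2 is two calls to $\scan$, each $O(n)$ by Proposition~\ref{prop:scan}; lines~3 and~5 take $O(1)$, and line~6 assembles $F$ in $O(n)$. The two recursive calls on line~4 cost $\falg(n(\cl{C_1}))+\falg(n(\cl{C_2}))\le\falg\bigl(n(\cl{C_1})+n(\cl{C_2})\bigr)=\falg(n-2)\le\falg(n-1)$ by superadditivity of $\falg$. Summing gives the claimed $\falg(n-1)+O(n)$ bound. I expect the only delicate point to be the excess bookkeeping in the splice step—confirming that no cycle of $F_1$ other than $D_1$ is altered, and that $u,v$ create no new isolated vertices—together with the elementary observation that $\scan$ returns a pair summing to $0$, which is what licenses the relabeling; everything else is routine accounting.
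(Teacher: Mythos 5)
Your proposal is correct and follows essentially the same argument as the paper: the identities $n(G)=n(\cl{C_1})+n(\cl{C_2})+2$ and $n_2(G)=n_2(\cl{C_1})+n_2(\cl{C_2})$, the fact that $\scan$ returns $\Del(C_i)=-\BDel(C_i)$ to justify the relabeling, the splice giving $\exc(F)=\exc(F_1)+\exc(F_2)$, and the same recursion for the running time. Your additional bookkeeping on the splice step and the superadditivity of $\falg$ only makes explicit what the paper leaves implicit.
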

    
    \begin{proof}  
    For correctness, first note that the relabeling step on line 3 is always possible as $\Del(C_i) = -\BDel(C_i)$ for $i \in [2]$. 
    Since $n(G) = n(\cl{C_1}) + n(\cl{C_2}) + 2$, $n_2(G) = n_2(\cl{C_1}) + n_2(\cl{C_2})$, and $\exc(F) = \exc(F_1)+\exc(F_2)$, we have 
    \begin{align*}
        \exc(F) &= \exc(F_1) + \exc(F_2)\\
        &\le \frac{n(C_1)+n_2(C_1)}{4} + \Del(C_1) + 2  + \frac{n(C_2) + n_2(C_2)}{4} + \BDel(C_2)\\
        &\le \frac{n(G) + n_2(G)}{4} + \frac{3}{2}. 
    \end{align*}
    
    For the time complexity, line 1 requires $O(n)$ time. By Proposition \ref{prop:scan}, line 2 requires $O(n(\cl{C_1})) + O(n(\cl{C_2})) = O(n)$ time. By induction, line 4 takes $\falg(n(\cl{C_1})) + \falg(n(\cl{C_2})) \leq \falg(n-1)$ time. Thus, in total, $\alg(G,e,-\frac12)$ takes $\falg(n-1)+O(n)$ time of inputs of size $n$.
    \end{proof}

    Before we handle the analysis of $\alg(G,e,-1)$, we first give an important subroutine which is an algorithmic version of Lemma \ref{lem:block3vx}.
	
\begin{algorithm}[H]
\small 
\SetKwInOut{Input}{Input}
\SetKwInOut{Output}{Output}
  \Input{A simple 2-connected subcubic graph $Z$ and distinct vertices $u,v_1,v_2$ of degree 2 in $Z$}
  \Output{$F \in \ec(Z + uv_i,uv_i)$ for some $i \in [2]$ with $\exc(F) \le \frac{n(Z+uv_i) + n_2(Z+uv_i)}{4} + 1$}
   For each $i\in[2]$, let $(\Del_i,\BDel_i) = \scan(Z+uv_i,uv_i)$\;
   \uIf{ $\Del_i \le -1$ for some $i \in [2]$}{\Return $F = \Algo(Z+uv_i,uv_i,\true)$\;}
   Let $C_{i,1},C_{i,2}$ denote the two subcubic chains of $(Z+uv_i,uv_i)$, $i\in [2]$\;
   Let $(\Del(C_{i,j}),\BDel(C_{i,j})) = \scan(\cl{C_{i,j}},e_{C_{i,j}})$ for $i,j \in [2]$\;
   Relabel if necessary so that $\Del(C_{1,1}) + \BDel(C_{1,2}) \le -\frac{1}{2}$\;
   Let $F_1 = \Algo(\cl{C_{1,1}},e_{C_{1,1}},\true)$ and $F_{2} = \Algo(\cl{C_{1,2}},e_{C_{1,2}},\false)$\;
  Let $u'$ be the neighbor of $u$ in $C_{1,1}$ and $v'$ be the neighbor of $v_1$ in $C_{1,1}$\;
  \Return{$F = (F_1 - e_{C_{1,1}}) \cup F_{2} + \{u,v\} + \{u'u,uv_1,v_1v'\}$\;}
      \caption{\label{algo:Z_subroutine} $\subroutine(Z,u,v_1,v_2)$}
\end{algorithm}

	\begin{prop}\label{prop:algo_Z_1=Z_2}
	Suppose $\Algo$ is correct for all inputs of size less than or equal to $n=n(Z)$. Then $\subroutine$ is correct and takes $\falg(n)+O(n)$ time for all inputs of size less than or equal to $n$.
	\end{prop}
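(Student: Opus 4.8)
The plan is to follow the proof of Lemma~\ref{lem:block3vx}, with $\scan$ serving as a computable surrogate for $(\del,\bdel)$, and to split according to the test on line~2 of Algorithm~\ref{algo:Z_subroutine}. Write $n=n(Z)$ (so $n\ge 3$, as $u,v_1,v_2$ are distinct). First I would handle the easy case: if $\Del_i\le -1$ for some $i\in[2]$, the subroutine returns $F=\Algo(Z+uv_i,uv_i,\true)$, and since $n(Z+uv_i)=n$ the assumed correctness of $\Algo$ on inputs of size at most $n$ yields $F\in\ec(Z+uv_i,uv_i)$ with $\exc(F)\le \frac{n(Z+uv_i)+n_2(Z+uv_i)}{4}+\Del_i+2\le \frac{n(Z+uv_i)+n_2(Z+uv_i)}{4}+1$. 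The running time here is one $\Algo$ call on a graph of order $n$, i.e.\ at most $\falg(n)$ steps, plus the two line-1 $\scan$ calls costing $O(n)$ steps by Proposition~\ref{prop:scan}; so $\falg(n)+O(n)$ in total.

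It remains to treat the case $\Del_1=\Del_2=-\tfrac12$. Since $Z$ is 2-connected, $\scan(Z+uv_i,uv_i)$ returns through its last branch, so this means exactly that $Z-\{u,v_i\}$ is disconnected; equivalently $(Z+uv_i,uv_i)$ is a rooted $\theta$-chain, and moreover $Z+uv_i$ is simple with both its chains nontrivial (a chain equal to a single edge $uv_i$ would put $uv_i\in E(Z)$ and leave $Z-\{u,v_i\}$ connected). The heart of the argument is to show that the relabeling on line~6 always succeeds, i.e.\ that for at least one index $i$ the two chains of $(Z+uv_i,uv_i)$ have distinct $\scan$-values, so that --- as $\scan$ outputs have the form $(-t,t)$ --- after ordering we obtain $\Del(C_{1,1})+\BDel(C_{1,2})=\Del(C_{1,1})-\Del(C_{1,2})\le -\tfrac12$. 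To prove this, let $C_{1,1},C_{1,2}$ be the chains of $(Z+uv_1,uv_1)$. Then $v_2$ lies in the interior of exactly one of them, say $C_{1,1}$, and the chain-block of $C_{1,1}$ containing $v_2$ must be the single vertex $\{v_2\}$: otherwise that 2-connected block minus $v_2$ stays connected, and then so does $Z-\{u,v_2\}$, contradicting that $(Z+uv_2,uv_2)$ is a rooted $\theta$-chain. Splitting $C_{1,1}$ at $v_2$ into subcubic chains $D$ (endpoints $u,v_2$) and $Y$ (endpoints $v_2,v_1$), the two chains of $(Z+uv_2,uv_2)$ are precisely $D$ and $C_{1,2}\cup Y$; using $\Del_1=\Del_2=-\tfrac12$ one checks $D$ and $C_{1,2}$ are nontrivial, while $Y$ may be trivial. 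Since $\scan$ of a loop is $-\tfrac12$ and $\scan$ of a chain closure is the sum of the $\scan$-values of its chain-blocks (Algorithm~\ref{algo:scan}), a direct computation gives
\[
\Del(C_{1,1})=\Del(D)+\Del(Y)-\tfrac12, \qquad \Del(C_{1,2}\cup Y)=\Del(C_{1,2})+\Del(Y)-\tfrac12
\]
(with $\Del(Y)=0$ when $Y$ is trivial). If the chains of both $(Z+uv_1,uv_1)$ and $(Z+uv_2,uv_2)$ were balanced, then $\Del(C_{1,1})=\Del(C_{1,2})$ and $\Del(D)=\Del(C_{1,2}\cup Y)$, and substituting the displayed identities forces $\Del(Y)=\tfrac12$, which is impossible as $\Del(Y)\le 0$.

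Granting this, I would verify the even cover $F=(F_1-e_{C_{1,1}})\cup F_2+\{u,v_1\}+\{u'u,uv_1,v_1v'\}$ returned on line~9 (after the relabeling, which may swap $v_1$ and $v_2$). With $u'v'=e_{C_{1,1}}$ the closure edge of $C_{1,1}$, the cover $F_1\in\ec(\cl{C_{1,1}},e_{C_{1,1}})$ has a cycle through $u'v'$; deleting $e_{C_{1,1}}$ turns it into a $u'$--$v'$ path, and the new edges $u'u,uv_1,v_1v'$ close it into a cycle covering $u$ and $v_1$. Since $F_2\in\bec(\cl{C_{1,2}},e_{C_{1,2}})$ avoids $e_{C_{1,2}}$, it is a subgraph of $C_{1,2}-\{u,v_1\}\subseteq Z$, disjoint from $\cl{C_{1,1}}-e_{C_{1,1}}$. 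Hence $F$ is an even cover of $Z+uv_1$ containing $uv_1$ with $\exc(F)=\exc(F_1)+\exc(F_2)$; combining the $\Algo$-bounds on $F_1,F_2$ with $n(\cl{C_{1,1}})+n(\cl{C_{1,2}})=n(Z+uv_1)-2$, $n_2(\cl{C_{1,1}})+n_2(\cl{C_{1,2}})=n_2(Z+uv_1)$, and $\Del(C_{1,1})+\BDel(C_{1,2})\le -\tfrac12$ gives
\[
\exc(F)\le \frac{n(Z+uv_1)+n_2(Z+uv_1)}{4}-\tfrac12+\Del(C_{1,1})+\BDel(C_{1,2})+2 \le \frac{n(Z+uv_1)+n_2(Z+uv_1)}{4}+1.
\]
For the running time: lines~4--5 perform a bounded number of depth first searches and $\scan$ calls on graphs of total order $O(n)$, so $O(n)$ steps; line~7 calls $\Algo$ on $\cl{C_{1,1}}$ and $\cl{C_{1,2}}$, whose orders sum to $n-2$, so by superadditivity of $\falg$ these two calls cost at most $\falg(n-2)\le\falg(n)$ steps; all else is $O(n)$. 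Total $\falg(n)+O(n)$.

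I expect the main obstacle to be the structural dichotomy of the second paragraph: getting the $\scan$-arithmetic on the split chains $D$, $Y$, $C_{1,2}\cup Y$ exactly right, and carefully ruling out all degenerate configurations via $\Del_1=\Del_2=-\tfrac12$ (a trivial $D$ or $C_{1,2}$, a parallel edge in some $Z+uv_i$, or $v_2$ sitting inside a 2-connected block). The rest is routine, parallel to the correctness arguments in Propositions~\ref{prop:ec_1/2} and~\ref{prop:algo_bec_3/2}.
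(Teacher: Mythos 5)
Your proof is correct and follows essentially the same route as the paper's: handle the $\Del_i\le -1$ branch by a single $\Algo$ call, and otherwise show the relabeling on line~6 succeeds because $v_{3-i}$ is a trivial block of a chain of $(Z+uv_i,uv_i)$, then verify the excess and running-time bounds exactly as in the paper. The only difference is that you spell out, via the $D$/$Y$ splitting and the $\scan$-additivity computation, the step the paper merely asserts (``this then implies $\Del(C_{i,1})\neq\Del(C_{i,2})$ for some $i$''), and your justification of that step is sound.
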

	
	\begin{proof}
	We first analyze correctness. If we return on line 3, by correctness of $\Algo$, we have $\exc(F) \le \frac{n(Z+uv_i) + n_2(Z+uv_i)}{4} + 1$. 
	So assume $\Del_i=\Del(Z+uv_i,uv_i) = -\frac{1}{2}$ for both $i \in [2]$. Thus both $(Z+uv_i,uv_i)$ are rooted $\theta$-chains, which implies that $v_{3-i}$ is a trivial block in one of the chains $C_{i,1}$ and $C_{i,2}$. 
    This then implies that $\Del(C_{i,1}) \neq \Del(C_{i,2})$ for some $i\in[2]$. 
    Thus the relabeling step on line 6 is always possible.
	
	Now consider the even cover $F$ returned on line 9. As $n(Z+uv_1) = n(\cl{C_{1,1}}) + n(\cl{C_{1,2}}) + 2$, $n_2(Z+uv_1) = n_2(\cl{C_{1,1}}) + n_2(\cl{C_{1,2}})$, and $\Del(C_{1,1}) + \BDel(C_{1,2}) \le -\frac{1}{2}$, we have
	\begin{align*}
        \exc(F) &= \exc(F_1) + \exc(F_2)\\
        &\leq \frac{n(\cl{C_{1,1}})+n_2(\cl{C_{1,1}})}{4} + \Del(\cl{C_{1,1}}) + 2  + \frac{n(\cl{C_{1,2}}) + n_2(\cl{C_{1,2}})}{4} + \BDel(\cl{C_{1,2}})\\
        &\leq \frac{n(Z+uv_1)+n_2(Z+uv_1)}{4}+\Del(\cl{C_{1,1}}) + \BDel(\cl{C_{1,2}}) + \frac32\\
        &\leq \frac{n(Z+uv_1) + n_2(Z+uv_1)}{4} + 1. 
    \end{align*}
    
    For the time complexity, as $n(\cl{C_{1,1}}) + n(\cl{C_{1,2}}) < n$, lines 3 and 7 both take at most $\falg(n)$ time. Furthermore, by Proposition \ref{prop:scan}, the remaining lines require $O(n)$ time. Since we call exactly one of line 3 or 7, $\subroutine(Z,u,v_1,v_2)$ takes $\falg(n)+O(n)$ time.
	\end{proof}
	
	We are now ready to present $\alg(G,e,-1)$.

    \begin{algorithm}[H]
	    \small 
        \SetKwInOut{Input}{Input}
        \SetKwInOut{Output}{Output}
        
        \Input{A subcubic graph $G$ and $e=uv \in E(G)$ such that $G - e$ is simple and 2-connected, and $\Del(G,e)=-1$.}

        \Output{$F\in \ec(G,e)$ with $\exc(F) \le \frac{n(G) + n_2(G)}{4} + 1$}
         Let $Z_1$ and $Z_2$ be the blocks (or single vertices) of $G-\{u,v\}$ as defined in Claim \ref{clm:Z_1Z_2}\; 
         Define vertices $u_i,u_i',v_j,v_j'$ and subcubic chains $U_i,V_j$ for $i,j\in[2]$, as in the proof of Theorem \ref{thm:main}\;
        Let $(\Del(U_i),\BDel(U_i)) = \scan(\cl{U_i},e_{U_i})$ and $(\Del(V_j),\BDel(V_j)) = \scan(\cl{V_j},e_{V_j})$ for $i,j\in[2]$\;
         \uIf{$Z_1 \neq Z_2$}{
            Relabel vertices as necessary so that $\Del(U_1)+\Del(V_2) + \BDel(U_2)+\BDel(V_1) \leq 0$\;
            Let $Z = Z_1 \cup Z_2 \cup Y$, where $Y$ is the subcubic chain from $Z_1$ to $Z_2$\; 
            Let $F_{U_1} = \Algo(\cl{U_1},e_{U_1},\true)$, $F_{V_2} = \Algo(\cl{V_2},e_{V_2},\true)$, $F_{U_2} = \Algo(\cl{U_2},e_{U_2},\false)$,  $F_{V_2} = \Algo(\cl{V_1},e_{V_1},\false)$, and $F_Z = \Algo(Z+u_1'v_2',u_1'v_2',\true)$\;
            \Return{ $F = (F_{U_1} - e_{U_1}) \cup (F_{V_2} - e_{V_2}) \cup F_{U_2} \cup F_{V_1} \cup (F_Z - u_1'v_2') + \{u,v\} + \{u_1u,uv,vv_2\}$}\;  
            }
            \uElse{
            Relabel vertices as necessary so that $\Del(U_1) + \Del(V_i) + \BDel(U_2) + \BDel(V_{3-i}) \le 0$ for $i \in [2]$\;
            Let $F_Z = \subroutine(Z_1,u_1',v_1',v_2')$\;
            Relabel so that $u_1'v_2' \in F_Z$\;
            Let $F_{U_1} = \Algo(\cl{U_1},e_{U_1},\true)$, $F_{V_2} = \Algo(\cl{V_2},e_{V_2},\true)$, $F_{U_2} = \Algo(\cl{U_2},e_{U_2},\false)$, and $F_{V_1} = \Algo(\cl{V_1},e_{V_1},\false)$\;
            \Return{$F = (F_Z - u_1'v_2') \cup (F_{U_1} - e_{U_1}) \cup (F_{V_2} - e_{V_2}) \cup F_{U_2} \cup F_{V_1} + \{u,v\} + \{u_1u,uv,vv_2\}$}\;
            }
        \caption{\label{algo:ec_1} $\alg(G,e,-1)$}
	\end{algorithm}

	\begin{prop}\label{prop:algo_ec_1}
	Suppose $\Algo$ is correct on all inputs of size less than $n=n(G)$. Then $\balg(G,e,-1)$ is correct and takes $\falg(n-1)+O(n)$ time for all inputs of size less than or equal to $n$. 
	\end{prop}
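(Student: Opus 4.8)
The plan is to follow, line by line, the two-case analysis in the proof of Theorem~\ref{thm:main} (Case 1: $Z_1\neq Z_2$; Case 2: $Z_1=Z_2$), checking that every combinatorial object produced there can be computed within the claimed time, that the glued object is a genuine even cover, and that the estimates $\Del,\BDel$ can play the role that $\del,\bdel$ play in the excess inequalities \eqref{eqn:case1del1}, \eqref{eqn:case1del2} and \eqref{eqn:case2dels}.

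For correctness, I would first note that the block structure of $G-\{u,v\}$, hence the vertices $u_i,u_i',v_j,v_j'$, the blocks $Z_1,Z_2$, and the subcubic chains $U_i,V_j$ (and $Y$ in Case~1), are all identified by a single depth-first search in $O(n)$ time, exactly as in Claim~\ref{clm:Z_1Z_2} and the surrounding discussion of the proof of Theorem~\ref{thm:main}. Second, each relabeling step must be shown to be achievable. The key fact is that our estimates always satisfy $\Del(\cdot)+\BDel(\cdot)=0$, so in Case~1 the quantity $\Del(U_1)+\Del(V_2)+\BDel(U_2)+\BDel(V_1)$ and its image under the global relabeling $1\leftrightarrow 2$ of the paths $P_1,P_2$ are negatives of one another, hence one of them is $\le 0$; in Case~2 the $\subroutine$ call is adversarial in whether it routes through $u_1'v_1'$ or $u_1'v_2'$, so the relabeling must hold for both values of $i$, and this is arranged by also invoking the $u\leftrightarrow v$ symmetry of $(G,e)$ (which swaps $U_i\leftrightarrow V_i$): after possibly swapping so that $|\Del(U_1)-\Del(U_2)|\ge|\Del(V_1)-\Del(V_2)|$ and then so that $\Del(U_1)\le\Del(U_2)$, both instances follow from $\BDel=-\Del$. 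With labels fixed, the returned $F$ is precisely the even cover from the proof of Theorem~\ref{thm:main}: a tuple of even covers of the pieces, each containing or avoiding its designated edge as dictated by the flag, is merged into one even cover, and the standard splitting/merging correspondence (as in Propositions~\ref{prop:subcubicchains} and~\ref{prop:contractchain}) gives $F\in\ec(G,e)$ and expresses $\exc(F)$ as the sum of the piece excesses minus a fixed constant. Substituting the bounds $\exc(F_{U_i})\le\tfrac{n(\cl{U_i})+n_2(\cl{U_i})}{4}+\Del(\cl{U_i},e_{U_i})+2$ (flag true) or $\le\tfrac{n(\cl{U_i})+n_2(\cl{U_i})}{4}+\BDel(\cl{U_i},e_{U_i})$ (flag false) from the inductive correctness of $\Algo$, the analogous bounds for the $V_j$, and for the central piece either $\exc(F_Z)\le\tfrac{n(Z+u_1'v_2')+n_2(Z+u_1'v_2')}{4}+\Del(Z+u_1'v_2',u_1'v_2')+2$ (Case~1, from $\Algo$) or $\exc(F_Z)\le\tfrac{n(Z_1+u_1'v_2')+n_2(Z_1+u_1'v_2')}{4}+1$ (Case~2, from Proposition~\ref{prop:algo_Z_1=Z_2}), and using the counting identities $n(G)=\sum n(\cdot)+2$, $n_2(G)=\sum n_2(\cdot)-2$ from the proof of Theorem~\ref{thm:main} (which hold verbatim for the grouped central piece, since $Z+u_1'v_2'$ is a chain whose chain-blocks are those of $Z_1$, $Y$ and $Z_2$), the $\tfrac14(n+n_2)$ contributions collapse to $\tfrac{n(G)+n_2(G)}{4}$ and one obtains $\exc(F)\le\tfrac{n(G)+n_2(G)}{4}+(\Del(U_1)+\Del(V_2)+\BDel(U_2)+\BDel(V_1))+\Del(Z+u_1'v_2',u_1'v_2')+2$ in Case~1 and $\exc(F)\le\tfrac{n(G)+n_2(G)}{4}+(\Del(U_1)+\Del(V_2)+\BDel(U_2)+\BDel(V_1))+1$ in Case~2. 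In Case~1, $Z+u_1'v_2'$ is never $2$-connected, so $\Del(Z+u_1'v_2',u_1'v_2')\le -1$ (it splits into the chain-blocks of $Z_1$ and $Z_2$, each contributing at most $-\tfrac12$); in Case~2 the $+1$ comes directly from $\subroutine$; and in both cases the relabeling makes the parenthesized sum at most $0$, yielding $\exc(F)\le\tfrac{n(G)+n_2(G)}{4}+1$.

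For the running time, the pieces $\cl{U_1},\cl{U_2},\cl{V_1},\cl{V_2}$ together with $Z+u_1'v_2'$ (Case~1) or $Z_1$ (Case~2) are essentially vertex-disjoint: since $\cl{U_i}=U_i-\{u,u_i'\}+e_{U_i}$ discards both endpoints of $U_i$ (similarly for $\cl{V_j}$), the sizes of these pieces sum to exactly $n(G)-2$. The algorithm makes one $\Algo$-call per $\cl{U_i}$ and $\cl{V_j}$ and either one $\Algo$-call on $Z+u_1'v_2'$ or one $\subroutine$-call on $Z_1$; by superadditivity of $\falg$ and Proposition~\ref{prop:algo_Z_1=Z_2} these cost at most $\falg(n-2)+O(n)\le\falg(n-1)+O(n)$ altogether, while the remaining work — one DFS, $O(1)$ calls to $\scan$ (each $O(n)$ by Proposition~\ref{prop:scan}), the relabelings, and assembling $F$ — is $O(n)$. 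Hence $\alg(G,e,-1)$ runs in $\falg(n-1)+O(n)$ time.

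I expect the main obstacle to be the Case~2 relabeling: it has no analogue in the proof of Theorem~\ref{thm:main}, where Case~2 cannot occur when $\del(G,e)=-1$ even though the estimate $\Del(G,e)=-1$ certainly can, and it requires combining the $u\leftrightarrow v$ symmetry with the within-vertex chain symmetry to absorb the choice made by $\subroutine$. A secondary, purely bookkeeping, hazard is keeping the additive constants and the $n_2$-corrections (degree-$2$ vertices of a piece that become degree-$3$ in $G$) consistent across the several merging operations so that they match the identities used in the corresponding part of the proof of Theorem~\ref{thm:main}; the degenerate configurations (trivial chains, $Z_i$ a single vertex, $Z+u_1'v_2'$ a doubled edge, so that $G\cong K_4$) should be checked against the conventions already fixed for loops and trivial chains.
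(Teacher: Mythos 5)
Your proposal is correct and follows essentially the same route as the paper's proof: determine the pieces from the block structure of $G-\{u,v\}$ by a DFS, justify the relabelings via $\Del+\BDel=0$, bound $\exc(F_Z)-2$ by $\frac{n(Z+u_1'v_2')+n_2(Z+u_1'v_2')}{4}-1$ using the chain decomposition into the blocks of $Z_1,Y,Z_2$ in Case 1 and Proposition \ref{prop:algo_Z_1=Z_2} in Case 2, and sum the inductive bounds so that the $\frac{n+n_2}{4}$ terms and additive corrections collapse. Your explicit treatment of the Case 2 relabeling --- invoking the $u\leftrightarrow v$ swap so that the inequality holds for both $i\in[2]$, which is genuinely needed when $\Del(U_1)=\Del(U_2)$ but $\Del(V_1)\neq\Del(V_2)$ --- is in fact more careful than the paper, which only asserts that the relabeling on line 10 is possible.
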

	
	\begin{proof}
	The proof of correctness follows the same structure of Section \ref{sec:mainproof}. The existence of $Z_1$ and $Z_2$ follows from Claim \ref{clm:Z_1Z_2}, and they can be determined from the block structure of $G-\{u,v\}$ in linear time.  As $\Del(U_i) = -\BDel(U_i)$ and $\Del(V_i) = -\BDel(V_i)$ for $i \in [2]$, the relabeling on lines 5 and 10 are always possible. Furthermore, regardless of whether $Z_1 \neq Z_2$ or $Z_1 = Z_2$, we have
	\begin{itemize}
	    \item $\exc(F) -2 = (\exc(F_{U_1}) - 2) + (\exc(F_{V_2}) - 2) + \exc(F_{U_2}) + \exc(F_{V_1}) + (\exc(F_Z) - 2)$,
	    \item $n(G) = n(\cl{U_1}) + n(\cl{V_2}) + n(\cl{U_2}) + n(\cl{V_1}) + n(Z+u_1'v_2') - 2$, and
	    \item $n_2(G) = n_2(\cl{U_1}) + n_2(\cl{V_2}) + n_2(\cl{U_2}) + n_2(\cl{V_1}) + n_2(Z+u_1'v_2') + 2$.
	\end{itemize}

	By induction, we have $\exc(F_{U_1}) - 2 \le \frac{n(\cl{U_1}) + n_2(\cl{U_1})}{4} + \Del(U_1)$, $\exc(F_{V_2}) - 2 \le \frac{n(\cl{V_2}) + n_2(\cl{V_2})}{4} + \Del(V_2)$, $\exc(F_{U_2}) \le \frac{n(\cl{U_2}) + n_2(\cl{U_2})}{4} + \BDel(U_2)$, and $\exc(F_{V_1}) \le \frac{n(\cl{V_1}) + n_2(\cl{V_1})}{4} + \BDel(V_1)$. We argue now that in both cases we have
	\begin{align}\label{eqn:Z}
	    \exc(F_{Z}) - 2 \le \frac{n({Z+u_1'v_2'}) + n_2({Z+u_1'v_2'})}{4} - 1.
	\end{align}
	If $Z_1 = Z_2$, this follows from Proposition \ref{prop:algo_Z_1=Z_2}. If $Z_1 \neq Z_2$, then $(Z+u_1'v_2',u_1'v_2')$ is the closure of a subcubic chain with at least two blocks, namely $Z_1$ and $Z_2$. By induction on its chain-blocks, we have
	\begin{align*}
	    \exc(F_Z) - 2 \le \frac{n(Z+u_1'v_2') + n_2(Z+u_1'v_2')}{4} + \Del(Z+u_1'v_2',u_1'v_2')  \le \frac{n(Z+u_1'v_2') + n_2(Z+u_1'v_2')}{4} -1
	\end{align*}
	and (\ref{eqn:Z}) holds in both cases. Thus,
    \begin{align*}
        \exc(F) - 2 &= (\exc(F_{U_1}) - 2) + (\exc(F_{V_2}) - 2) + \exc(F_{U_2}) + \exc(F_{V_1}) + (\exc(F_Z) - 2)\\
        &\le \frac{n(G) + n_2(G)}{4} + \Del(U_1)+\Del(V_2) + \BDel(U_2)+\BDel(V_1) + \Del(Z+u_1v_2',u_1'v_2')\\
        &\le \frac{n(G) + n_2(G)}{4} -1.
    \end{align*}
    
	For the time complexity, note that we only call $\Algo$ and $\subroutine$ on inputs whose sizes sum to less than $n$. As the remaining lines require $O(n)$ time by Proposition \ref{prop:scan}, we have that the entire algorithm requires $\falg(n-1)+O(n)$ time.
	\end{proof}
	
	We now present the final case for $\alg$. 
	
		\begin{algorithm}[H]
	    \small 
        \SetKwInOut{Input}{Input}
        \SetKwInOut{Output}{Output}
        
        \Input{A subcubic graph $G$ and $e=uv \in E(G)$ with $G - e$ is simple and 2-connected, and $\Del(G,e)=-\frac{3}{2}$ (i.e. $(G_u,f_u)$ is a rooted $\theta$-chain)}
        \Output{$F\in \ec(G,e)$ with $\exc(F) \le \frac{n(G) + n_2(G)}{4} + \frac{1}{2}$}
        Let $C_1$ and $C_2$ denote the chains of $(G_u,f_u)$ with common endpoints $f_u=\{u_1,u_2\}$ and $v \in V(C_1)$\;
        Let $x_i \in V(C_2)$ be the neighbor of $u_i$ for $i \in [2]$\;
        Write $C_1 = u_1e_0B_1 \ldots e_{k-1}B_ke_ku_2$\;
        Let $\ell\in[k]$ be the unique index such that $v \in V(B_\ell)$\;
        Let $v'$ denote the endpoint of $e_{\ell-1}$ in $B_\ell$, and let $v''$ denote the endpoint of $e_\ell$ in $B_\ell$\;
        Let $D_1$ and $D_2$ denote the chains of $C_1$ with end points $\{u_1,v'\}$ and $\{v'',u_2\}$ respectively\;

        For $i \in [2]$, let $(\Del(D_i),\BDel(D_i)) = \scan(\cl{D_i},e_{D_i})$\;
        Relabel if necessary so that $\Del(D_1) + \BDel(D_2) \le 0$\;
        Let $F_2 = \Algo(\cl{C_2},e_{C_2},\true)$, $F_{D,1} = \Algo(\cl{D_1},e_{D_1},\true)$, $F_{D,2} = \Algo(\cl{D_2},e_{D_2},\false)$, and
        $F_{\ell} = \Algo(B_{\ell} + v'v,v'v,\true)$\;
        \Return{$F = (F_2 - e_{C_2}) \cup (F_{D,1} - e_{D_1}) \cup F_{D,2} \cup (F_{\ell} - v'v) + \{u,u_1,u_2\} + \{e_0,e_{\ell - 1},u_1x_1,uv,uu_2,u_2x_2\}$}\;
        
        \caption{\label{algo:ec_3/2} $\alg(G,e,-\frac{3}{2})$}
	\end{algorithm}
	
 	\begin{prop}\label{prop:algo_ec_3/2}
	Suppose $\Algo$ is correct for all inputs of size less than $n=n(G)$. Then $\alg(G,e,-\frac32)$ is correct and takes $\falg(n-1)+O(n)$ time for all inputs of size less than or equal to $n$. 
	\end{prop}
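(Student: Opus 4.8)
The proof follows the inductive construction in the proof of Lemma~\ref{lem:minimaltheta2}(ii); the work is to check that the even cover produced by the algorithm attains the bound proved there, and to bound the running time.

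First I would verify that the relabeling on line~8 is always possible. Since $\scan$ returns a pair of the form $(-t,t)$ with $t\in\{\frac12,1,\frac32\}$, we have $\BDel(D_i)=-\Del(D_i)$ for $i\in[2]$, so after possibly interchanging the labels $u_1,u_2$ (which interchanges $D_1\leftrightarrow D_2$, $v'\leftrightarrow v''$, $x_1\leftrightarrow x_2$ and the two end edges) we may assume $\Del(D_1)\le\Del(D_2)$, i.e.\ $\Del(D_1)+\BDel(D_2)\le 0$. Next I would trace the big cycle of $F$ exactly as in the lemma: it uses $F_2-e_{C_2}$ for the segment from $x_1$ to $x_2$ through $C_2$, the edges $e_0$ and $e_{\ell-1}$ together with $F_{D,1}-e_{D_1}$ for the segment from $u_1$ to $v'$ through $D_1$, $F_\ell-v'v$ for the segment from $v'$ to $v$ through $B_\ell$, and the edges $u_1x_1$, $u_2x_2$, $uu_2$, $uv$ to close it up through the new vertices $u,u_1,u_2$; meanwhile $F_{D,2}$ contributes only disjoint cycles and isolated vertices. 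One checks in this way that $F\in\ec(G,e)$ and that the designated cycles of $F_2,F_{D,1},F_\ell$ are merged into one, yielding
\[
\exc(F)-2=(\exc(F_2)-2)+(\exc(F_{D,1})-2)+\exc(F_{D,2})+(\exc(F_\ell)-2).
\]
The degenerate situations — $B_\ell$ a single vertex (so $B_\ell+v'v$ is a loop handled by the base case of $\Algo$), or a trivial $D_i$ — fit into this scheme without change.

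For the excess bound, since $\Algo$ is correct on each of $(\cl{C_2},e_{C_2})$, $(\cl{D_1},e_{D_1})$, $(\cl{D_2},e_{D_2})$, $(B_\ell+v'v,v'v)$ (all of which have fewer than $n$ vertices) and $\scan$ never returns first coordinate larger than $-\frac12$, we get $\exc(F_2)-2\le\frac{n(\cl{C_2})+n_2(\cl{C_2})}{4}-\frac12$, $\exc(F_{D,1})-2\le\frac{n(\cl{D_1})+n_2(\cl{D_1})}{4}+\Del(D_1)$, $\exc(F_{D,2})\le\frac{n(\cl{D_2})+n_2(\cl{D_2})}{4}+\BDel(D_2)$, and $\exc(F_\ell)-2\le\frac{n(B_\ell+v'v)+n_2(B_\ell+v'v)}{4}-\frac12$. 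Adding these and using $\Del(D_1)+\BDel(D_2)\le 0$ bounds $\exc(F)-2$ by $\frac14\Sigma-1$, where $\Sigma$ is the sum of the four quantities $n(\cdot)+n_2(\cdot)$. Grouping the chain-blocks $B_1,\dots,B_{\ell-1}$ into $\cl{D_1}$ and $B_{\ell+1},\dots,B_k$ into $\cl{D_2}$ via Proposition~\ref{prop:subcubicchains}, the vertex-count identities from the proof of Lemma~\ref{lem:minimaltheta2}(ii) read $n(G)=n(\cl{C_2})+n(\cl{D_1})+n(\cl{D_2})+n(B_\ell+v'v)+3$ and $n_2(G)=n_2(\cl{C_2})+n_2(\cl{D_1})+n_2(\cl{D_2})+n_2(B_\ell+v'v)-1$, so $\Sigma=n(G)+n_2(G)-2$ and therefore $\exc(F)-2\le\frac{n(G)+n_2(G)}{4}-\frac32$, i.e.\ $\exc(F)\le\frac{n(G)+n_2(G)}{4}+\frac12$, as required.

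For the running time, the four recursive calls are on inputs of sizes $n(C_2)-2$, $n(D_1)-2$, $n(D_2)-2$, $n(B_\ell)$, which sum to $n(C_1)+n(C_2)-4=n(G)-3<n$; so by superadditivity these calls together cost at most $\falg(n-1)$ steps, while the DFS that finds and decomposes $C_1,C_2$, the search for $\ell$, the $O(1)$ calls to $\scan$ (each $O(n)$ by Proposition~\ref{prop:scan}), and the assembly of $F$ take $O(n)$, giving the claimed $\falg(n-1)+O(n)$. I expect the main obstacle to be the bookkeeping that identifies the algorithm's coarse decomposition $C_1=D_1\cup B_\ell\cup D_2$ with the block-by-block estimates of Lemma~\ref{lem:minimaltheta2} — including checking that $\cl{D_1},\cl{D_2},B_\ell+v'v$ are admissible inputs for $\Algo$ and that the relabeling and the degenerate cases preserve both the even-cover structure and the vertex counts.
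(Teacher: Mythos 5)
Your proposal is correct and follows essentially the same route as the paper's proof: the same decomposition of $F$ into $F_2, F_{D,1}, F_{D,2}, F_\ell$, the same excess identity and vertex-count identities $n(G)=n(\cl{C_2})+n(\cl{D_1})+n(\cl{D_2})+n(B_\ell+v'v)+3$ and $n_2(G)=\cdots-1$, the same use of $\Del(D_1)+\BDel(D_2)\le 0$ together with $\Del(C_2),\Del(B_\ell+v'v,v'v)\le-\tfrac12$, and the same accounting for the recursive calls. Your additional checks (feasibility of the relabeling on line~8 and the degenerate cases of a trivial $D_i$ or a single-vertex $B_\ell$) are consistent with, and slightly more careful than, the paper's write-up.
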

	
	\begin{proof}
	We first analyze the correctness of the returned even  cover $F$. By induction, we have that $\exc(F_2) \le \frac{n(\cl{C_2}) + n_2(\cl{C_2})}{4} + \Del(C_2) + 2$, $\exc(F_{D,1}) \le \frac{n(\cl{D_1}) + n_2(\cl{D_1})}{4} + \Del(D_1)  + 2$, $\exc(F_{D,2}) \le \frac{n(\cl{D_2}) + n_2(\cl{D_2})}{4} + \BDel(D_2)$, and $\exc(F_{\ell}) \le \frac{n(B_{\ell} + v'v) + n_2(B_{\ell} + v'v)}{4} + \frac{3}{2}$. As $\exc(F)-2 = (\exc(F_2)-2) + (\exc(F_{D,1}) - 2) + \exc(F_{D,2}) + (\exc(F_{\ell}) - 2)$, $n(G) = n(\cl{C_2}) + n(\cl{D_1}) + n(\cl{D_2}) + n(B_{\ell} + v'v) + 3$, and $n_2(G) = n_2(\cl{C_2}) + n_2(\cl{D_1}) + n_2(\cl{D_2}) + n_2(B_{\ell} + v'v) - 1$, we have
	
	\begin{align*}
	    \exc(F)-2 &= (\exc(F_2)-2) + (\exc(F_{D,1}) - 2) + \exc(F_{D,2}) + (\exc(F_{\ell}) - 2)\\
	    &\le \frac{n(G) + n_2(G)}{4} -\frac12 + \Del(C_2) + \Del(D_1) + \BDel(D_2) + \Del(B_\ell+v'v,v'v) \\
	    &\leq \frac{n(G) + n_2(G)}{4} -\frac32,
	\end{align*}
	since $\Del(C_2), \Del(B_\ell+v'v,v'v) \le -\frac{1}{2}$ and $\Del(D_1) + \BDel(D_2) \le 0$.
	Thus $\exc(F)$ satisfies our desired bound. 
	
	For the time analysis, as we only call $\Algo$ on inputs whose sizes sum to less than $n$, line 9 takes at most $\falg(n)$ time. Furthermore, by Proposition \ref{prop:scan}, the remaining lines require $O(n)$ time. Thus, $\alg(G,e,-\frac{3}{2})$ takes $\falg(n-1)+O(n)$ time.
	\end{proof}

	To summarize, we have the following.  
	
	\begin{cor}
	$\Algo$ is correct and takes $O(n^2)$ time. 
	\end{cor}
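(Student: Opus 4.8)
The plan is to prove both halves of the corollary --- that $\Algo$ always returns an even cover meeting its output specification, and that it runs in $O(n^2)$ time --- simultaneously by strong induction on $n(G)$, leaning on the fact that each supporting result (Proposition \ref{prop:scan}, the proposition establishing $(\del(G,e),\bdel(G,e)) \le (\Del(G,e),\BDel(G,e))$, and Propositions \ref{prop:algo_bec_3/2}, \ref{prop:ec_1/2}, \ref{prop:algo_Z_1=Z_2}, \ref{prop:algo_ec_1}, \ref{prop:algo_ec_3/2}) is already phrased conditionally on $\Algo$ being correct on strictly smaller inputs. For correctness, the base case $G$ a loop is dispatched by lines $1$--$4$ of Algorithm \ref{algo:main}. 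If $G-e$ is not $2$-connected, then $(G,e)$ is the closure of a subcubic chain $C = xe_0B_1e_1\cdots e_{k-1}B_ke_ky$ with $k\ge 2$, so every chain-block has $n(\cl{B_i}) < n(G)$; by the inductive hypothesis each $\Algo(\cl{B_i},e_{B_i},\flag)$ is valid, and the correspondence in Proposition \ref{prop:subcubicchains} between even covers of the closure and tuples of even covers of its chain-blocks (together with the additivity of $\Del,\BDel$ over chain-blocks built into $\scan$) shows the assembled $F$ lies in $\ec(G,e)$ or $\bec(G,e)$ with $\exc(F) \le \frac{n(G)+n_2(G)}{4} + \Del(G,e) + 2$, respectively $\le \frac{n(G)+n_2(G)}{4} + \BDel(G,e)$. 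If $G-e$ is $2$-connected, $\Algo$ computes $(\Del,\BDel) = \scan(G,e)$ and delegates to $\alg(G,e,\Del)$ or $\balg(G,e)$, whose output specifications are met by Propositions \ref{prop:algo_bec_3/2}, \ref{prop:ec_1/2}, \ref{prop:algo_ec_1}, \ref{prop:algo_ec_3/2} (the case $\alg(G,e,-1)$ additionally invoking Proposition \ref{prop:algo_Z_1=Z_2} on a block $Z$ of $G-\{u,v\}$ with $n(Z) < n(G)$), all of whose hypotheses are supplied by the inductive hypothesis. Since $\exc(G) = \min\{\minexc(G,e)+2,\ \bminexc(G,e)\}$ and $(\Del(G,e),\BDel(G,e)) \in \{(-\tfrac12,\tfrac12),(-1,1),(-\tfrac32,\tfrac32)\}$, this also recovers $\exc(G) \le \frac{n(G)+n_2(G)}{4}+1$, i.e.\ \eqref{eqn:ecexc}.

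For the running time, I would take $\falg(n) := Cn^2$ with $C$ larger than every implied linear constant, and check that it is superadditive and satisfies all the recurrences. Superadditivity of $n\mapsto Cn^2$ is immediate. When $G-e$ is $2$-connected, Propositions \ref{prop:algo_bec_3/2}, \ref{prop:ec_1/2}, \ref{prop:algo_ec_1}, \ref{prop:algo_ec_3/2} each yield a bound of the form $\falg(n) \le \falg(n-1)+O(n)$: the recursive $\Algo$-calls have input sizes summing to at most $n-1$ (the endpoints of $e$ are consumed), so superadditivity collapses their cost to $\falg(n-1)$, while the $\subroutine$-calls inside $\alg(G,e,-1)$ are charged through Proposition \ref{prop:algo_Z_1=Z_2} to subgraphs with fewer than $n$ vertices; and $C(n-1)^2 + cn \le Cn^2$ whenever $n\ge 1$ and $C\ge c$. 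When $G-e$ is not $2$-connected, $\Algo$ spends $O(n)$ on a depth-first search for the chain decomposition and recurses on chain-blocks of sizes $n_1,\dots,n_k$ with $k\ge 2$, $\sum_i n_i = n$, and each $n_i < n$; then $\sum_i Cn_i^2 \le C\bigl(\max_i n_i\bigr)\sum_i n_i \le C(n-1)n = Cn^2 - Cn$, which absorbs the $O(n)$ overhead. Hence $\falg(n) = O(n^2)$ and $\Algo$ runs in $O(n^2)$ time.

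The only genuinely delicate step is this running-time bookkeeping: one must confirm, for each of Algorithms \ref{algo:bec}--\ref{algo:ec_3/2}, that every recursive branching strictly decreases the $\falg$-budget --- either because the recursive input sizes sum to at most $n-1$, or, in the chain-splitting case where they sum to exactly $n$, because at least two parts are produced so that the strict inequality $\sum_i n_i^2 \le (n-1)n$ holds and the quadratic slack $Cn$ dominates the linear per-node work. This is routine but is where an off-by-one could hide; granting it, the corollary follows mechanically from the propositions already proved.
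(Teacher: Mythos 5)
Your proof is correct and follows essentially the same route as the paper: strong induction on $n(G)$, handling the loop base case, the chain-splitting case via Proposition \ref{prop:subcubicchains} and the additivity of $\scan$, and the 2-connected case by delegating to the conditional Propositions \ref{prop:algo_bec_3/2}--\ref{prop:algo_ec_3/2}, with $\falg(n)=Cn^2$ verified against the recurrences. If anything, your bookkeeping in the chain-splitting case (using $k\ge 2$ to get $\sum_i n_i^2 \le (n-1)n$ and absorb the linear overhead) is more explicit than the paper's.
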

	\begin{proof}
	    We show inductively that we can takes $\falg(n)=O(n^2)$.
	    First note that lines 1-5 take constant time.
	    Line 6 takes linear time to check, and if executed, lines 7-12 take $O(n) + \sum_{i=1}^k \falg(n(\cl{B_i})) \leq O(n)+\sum_{i=1}^k O(n(\cl{B_i})^2) = O(n^2)$.
	    
	    Line 13 take linear time by Proposition \ref{prop:scan}, and in lines 14-17, we execute exactly one of $\alg(G,e,\Del)$ and $\balg(G,e)$, which takes $\falg(n-1)+O(n)$ time by Propositions \ref{prop:algo_bec_3/2}, \ref{prop:ec_1/2}, \ref{prop:algo_ec_1}, and \ref{prop:algo_ec_3/2}.
	    It follows that we can take $\falg(n) = O(n^2)$.
	\end{proof}
	
	\begin{cor}
	    Given a simple 2-connected subcubic graph $G$, we can find an even cover $F$ of $G$ with $\exc(F)\leq \frac{n(G)+n_2(G)}{4}+1$ in quadratic time.
	\end{cor}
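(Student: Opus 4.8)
The plan is to reduce the statement to the preceding corollary, which guarantees that $\Algo$ is correct and runs in $O(n^2)$ time. Let $G$ be a simple 2-connected subcubic graph; being 2-connected it has at least one edge, so fix any $e=uv\in E(G)$. Since $G$ is simple, $G-e$ is simple, so $(G,e)$ is a valid input to $\Algo$. The procedure is: compute $F_1:=\Algo(G,e,\true)\in\ec(G,e)$ and $F_2:=\Algo(G,e,\false)\in\bec(G,e)$, and output whichever of $F_1,F_2$ has the smaller excess. Both $F_1$ and $F_2$ are even covers of $G$, so the output $F$ is an even cover of $G$.

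For correctness, I would first record the guarantees from the preceding corollary: $\exc(F_1)\le \frac{n(G)+n_2(G)}{4}+\Del(G,e)+2$ and $\exc(F_2)\le \frac{n(G)+n_2(G)}{4}+\BDel(G,e)$. The key elementary fact is that $\Del(G,e)+\BDel(G,e)=0$ for every such pair $(G,e)$: when $G$ is a loop or $G-e$ is 2-connected this is immediate, since by construction $(\Del(G,e),\BDel(G,e))\in\{(-\tfrac12,\tfrac12),(-1,1),(-\tfrac32,\tfrac32)\}$; otherwise $(\Del(G,e),\BDel(G,e))$ is the sum, over the chain-blocks of the chain decomposition of $(G,e)$, of the pairs $(\Del(\cl{B_i},e_{B_i}),\BDel(\cl{B_i},e_{B_i}))$, and each $\cl{B_i}-e_{B_i}$ is 2-connected or a single vertex, so each summand is one of the three balanced pairs above. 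Writing $a:=\BDel(G,e)$, so that $\Del(G,e)=-a$, we obtain
\begin{align*}
\exc(F)=\min\{\exc(F_1),\exc(F_2)\}&\le \frac{n(G)+n_2(G)}{4}+\min\{\Del(G,e)+2,\ \BDel(G,e)\}\\
&=\frac{n(G)+n_2(G)}{4}+\min\{2-a,\ a\}\ \le\ \frac{n(G)+n_2(G)}{4}+1,
\end{align*}
where the last inequality holds because $(2-a)+a=2$ forces $\min\{2-a,a\}\le 1$. Thus $F$ is the desired even cover.

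For the running time, the procedure makes exactly two calls to $\Algo$ on an input of size $n(G)$, each costing $O(n(G)^2)$ by the preceding corollary, plus an $O(1)$ comparison of the two excesses; the total is $O(n(G)^2)$. I do not expect a genuine obstacle: all the substantive work --- the correctness and quadratic-time analyses of $\Algo$, $\scan$, $\alg$, $\balg$ --- is already in place, and the only new ingredients are the balancedness identity $\Del(G,e)=-\BDel(G,e)$ and the one-line observation that $\min\{2-a,a\}\le 1$. The single point worth double-checking is that this identity survives the chain-decomposition branch of $\scan$, which it does because each chain-block $\cl{B_i}$ appearing there has $\cl{B_i}-e_{B_i}$ 2-connected or trivial, so $\scan$ returns a balanced pair on it without further recursion.
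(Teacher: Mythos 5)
Your proposal is correct and follows exactly the paper's route: pick an arbitrary edge, run $\Algo(G,e,\true)$ and $\Algo(G,e,\false)$, and return the cover of smaller excess. The paper leaves the justification implicit; your added observation that $\Del(G,e)+\BDel(G,e)=0$ in all branches of $\scan$, hence $\min\{\Del(G,e)+2,\BDel(G,e)\}\le 1$, is the correct reason one of the two outputs meets the bound.
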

	\begin{proof}
	    Pick an arbitrary edge $e\in E(G)$. Run $\Algo(G,e,\true)$ and $\Algo(G,e,\false)$.
	    One of the returned even covers will have excess at most $\frac{n(G)+n_2(G)}{4}+1$.
	\end{proof}
    
    \bigskip
	
    {\bf Acknowledgements}  We would like to thank Swati Gupta for suggesting problems related to the Traveling Salesperson Problem during Michael Wigal's PhD oral examination.

	
	\bibliography{refs}
	\bibliographystyle{siam}

\end{document}